\numberwithin{equation}{section}
\theoremstyle{plain}
\newtheorem{theorem}{Theorem}[section]
\newtheorem{lemma}[theorem]{Lemma}
\newtheorem{remark}[theorem]{Remark}
\newtheorem{corollary}[theorem]{Corollary}
\newtheorem{definition}[theorem]{Definition}
\newtheorem{proposition}[theorem]{Proposition}
\newtheorem{example}[theorem]{Example}
\numberwithin{equation}{section}
\newcommand{\catO}{\mathcal{O}}
\newcommand{\frakg}{\mathfrak{g}}
\newcommand{\frakh}{\mathfrak{h}}
\newcommand{\frakm}{\mathfrak{m}}
\newcommand{\bbC}{\mathbb{C}}
\newcommand{\bbR}{\mathbb{R}}
\newcommand{\bbZ}{\mathbb{Z}}
\newcommand{\calR}{\mathcal{R}}
\newcommand{\calA}{\mathcal{A}}
\newcommand{\calD}{\mathcal{D}}
\newcommand{\calF}{\mathcal{F}}
\newcommand{\calW}{\mathcal{W}}
\newcommand{\inv}{^{-1}}
\newcommand{\chk}{^{\vee}}
\newcommand{\Diag}{\mathrm{diag}}
\newcommand{\End}{\mathrm{End}}
\newcommand{\Mod}{\mathrm{mod}}
\newcommand{\Deg}{\mathrm{deg}}
\newcommand{\finA}{ \mathring{A} }
\newcommand{\fing}{ \mathring{\frakg} }
\newcommand{\bara}{ \bar{a} }
\newcommand{\bari}{ \bar{i} }
\newcommand{\barj}{ \bar{j} }
\newcommand{\barA}{ \bar{A} }
\newcommand{\baralpha}{ \bar{\alpha} }
\newcommand{\betazh}{ \beta^+}
\newcommand{\betafu}{ \beta^-}
\newcommand{\bgamma}{\boldsymbol{\gamma}}
\newcommand{\wt}{\mathrm{wt}}
\newcommand{\bfa}{\mathbf{a}}
\newcommand{\bfb}{\mathbf{b}}
\newcommand{\bfe}{\mathbf{e}}
\newcommand{\bff}{\mathbf{f}}
\newcommand{\bfs}{\mathbf{s}}
\newcommand{\bsf}{\boldsymbol{f}}
\newcommand{\bsg}{\boldsymbol{g}}
\newcommand{\bfepsilon}{\boldsymbol{\varepsilon}}
\newcommand{\bfm}{\mathbf{m}}
\newcommand{\bfazh}{\mathbf{a}^+}
\newcommand{\bfk}{\mathbf{k}}
\newcommand{\finW}{ \mathring{W} }
\newcommand{\finQ}{ \mathring{Q} }
\newcommand{\scrB}{ \mathscr{B}}
\newcommand{\scrT}{ \mathscr{T}}
\newcommand{\scrX}{ \mathscr{X}}
\newcommand{\UqA}{ U_q(A_{n-1}^{(1)}) }
\newcommand{\Uqg}{ U}
\newcommand{\tP}{ \widetilde{P} }
\newcommand{\td}{ \tilde{d} }
\newcommand{\tomega}{\tilde{\omega}}
\newcommand{\tpsi}{\tilde{\psi}}
\newcommand{\tdelta}{\tilde{\delta}}
\newcommand{\tW}{ \widetilde{W} }
\newcommand{\cket}[1]{|{#1}\rangle}
\newcommand{\Xzh}[1]{X^+_{#1}}
\newcommand{\inprod}[2]{\langle{#1, #2^\vee}\rangle}
\newcommand{\AffX}{X_N^{(r)}}
\newcommand{\AffC}{C_n^{(1)}}
\newcommand{\AffAt}{A_{2n}^{(2)}}
\newcommand{\AffDt}{D_{n+1}^{(2)}}
\begin{document}	
\title[On multiplicity-free weight modules]{On  multiplicity-free weight modules \\over quantum affine algebras}
	\author{Xingpeng Liu }
	\address{Shenzhen International Center for Mathematics, SUSTech, and School of Mathematical Science, University of Science and Technology of China, Hefei, 230026, Anhui, P. R. China}
	\email{xpliu127@ustc.edu.cn}
	\subjclass[2020]{Primary: 17B37, 17B10; Secondary: 20G42, 16T20}
	\keywords{Quantum affine algebra, representation theory, multiplicity-free weight modules}

	\maketitle
	
\begin{abstract}
	In this note, our goal is to construct and study the  multiplicity-free weight modules of quantum affine  algebras. For this, we introduce the notion of shiftability condition with respect to a symmetrizable generalized Cartan matrix, and investigate its applications on the study of quantum affine algebra structures and the realizations of the infinite-dimensional multiplicity-free weight modules. 
	We also compute the highest $\ell$-weights of the infinite-dimensional multiplicity-free weight modules as highest $\ell$-weight modules. 
\end{abstract}
	
     \section{Introduction}
     Let $U_q(\frakg)$ be the quantum affine algebra (without derivation) associated to an affine Lie algebra $\frakg$ over $\bbC$ in which $q$ is not a root of unity. 
     In this note, we are concerned with  infinite-dimensional multiplicity-free weight representations, i.e., all of their weight subspaces are one-dimensional,  over  $U_q(\frakg)$. As we shall see, these representations are the basic representations towards to the infinite dimensional modules of quantum affine algebras.
     
     In the classical cases, the multiplicity-free weight representations over finite-dimensional simple Lie algebras, or more general, the bounded weight representations have been extensively studied in  \cite{BBL,BHL,GS,GS2}. These representations play a crucial role in the classification of simple weight modules of finite dimensional simple Lie algebras (cf. \cite{Mat}). For the quantum groups of finite type, Futorny-Hartwig-Wilson \cite{FHW} gave a classification of all infinite-dimensional irreducible multiplicity-free weight representations of type $ A_n $. Recently, the infinite-dimensional multiplicity-free weight representations of the quantum groups of types $A_n, B_n$ and $C_n$ were constructed in \cite{CGLW}.

    As an important class of multiplicity-free weight modules, the $q$-oscillator representations over  $U_q(\frakg)$ of types $A_n^{(1)}$, $ C_n^{(1)} $, $A_{2n}^{(2)}$, and $D_{n+1}^{(2)}$ have been obtained in the works of  T. Hayashi, A. Kuniba, M. Okado \cite{Hay90,K18,KO13,KO15}.  Our goal is to construct infinite-dimensional multiplicity-free weight representations of $U_q(\frakg)$ in a general way. For this, associated to each symmetrizable generalized Cartan matrix, we introduce a system of equations in a Laurent polynomial ring $\calA$ (essentially, the Cartan part of $U_q(\frakg)$) by the shift operators. We say that the corresponding generalized Cartan matrix satisfies  the \emph{shiftability condition} if the system of equations has solutions (see Subsection \ref{Subsec:ShiftCondition}). One result of this note is that an affine Cartan matrix satisfies the shiftability condition if and only if the relevant Dynkin diagram is one of the types mentioned above (see Theorem \ref{mainthm}). The solutions allow us to define $U_q(\frakg)$-module structures on $\calA$, and to relate the quantum affine algebra structures with the $n$-fold quantized oscillator algebra.   
    Our method for the construction is parallel with the earlier work concerning $U^0$-free modules \cite{CGLW}. Namely,  we can get the multiplicity-free weight modules of $U_q(\frakg)$  by applying the ``weighting'' procedure to the above modules on $\calA$. In particular, the $q$-oscillator representations can also be reconstructed. 
     
     For the study of weight representations of quantum affine algebras, the concepts of $\ell$-weights and $\ell$-weight vectors were proved especially useful, which allow one to  refine the spectral data properly in weight representations. For example, we have the classification of irreducible  finite-dimensional representations (cf. \cite{CP91,CP98}) and infinite-dimensional weight representation of quantum affine algebras in \cite{Her04, MY14} by highest $\ell$-weights (Note that their highest $\ell$-weights are determined by \emph{Drinfeld polynomials} and \emph{rational functions}, respectively). In this note, we shall compute explicitly the highest $\ell$-weight of the $q$-oscillator representations.  For the type $A_n^{(1)}$, the highest $\ell$-weights of $q$-oscillator representations also were discussed in  \cite{BGKNR,BGKNR2,KL2022}.

The paper is organized as follows. In Section \ref{Sec:Pre}, we give some necessary notations, and review two presentations of  quantum affine algebras. In Section \ref{Sec:Highestweightmodule} we recall the definition of highest $\ell$-weight representations. Then we obtain the classification of highest $\ell$-modules with finite weight  multiplicities in general. In Section \ref{Sec:Shiftability}, we introduce the notion of shiftability condition, and present the solutions to the corresponding system of equations, which allow us to study the compatible structures of quantum affine algebras with the $n$-fold quantized oscillator algebra. In Section \ref{Sec:construction} the infinite-dimensional multiplicity-free weight modules are constructed. In Section \ref{Sec: highestlweights}, we  compute the highest $\ell$-weight of the $q$-oscillator representations.

 \textsc{Conventions}. Let $\bbZ,\bbR$, and $ \bbC$ be the sets of integers, real numbers and complex numbers respectively, denote $\bbC \setminus \{0\}$ by $\bbC^\times$, the set of nonnegative integers by $\bbZ_{\geq 0}$, and the notation $\delta_{ij}$ stands for the Kronecker symbol in this paper.

     \section{Preliminaries and notations}\label{Sec:Pre}
     First, let us recall some necessary notations and two presentations of quantum affine algebras based on \cite{BN04,Drin87,Kac}.
     \subsection{Affine Kac-Moody algebras}
     Let $\frakg= \frakg(\AffX)$ be an affine Kac-Moody algebra with respect to the generalized Cartan matrix  $A = (a_{ij})_{i, j \in I}$ of type $\AffX$ where $I = \{0,1,\cdots,n \}$ is an indexed set and $\AffX$ is a Dynkin diagram from Table Aff $r$ of \cite{Kac}, except in the case of $\AffX = \AffAt ( n \geq 1)$, where we reverse the numbering of the simple roots. 
     
     Let $\{\alpha_i \}_{i \in I} \subset \frakh^*$ (resp. $\{\alpha_i\chk \}_{i \in I} \subset \frakh$) denote the set of simple roots (resp. simple coroots) such that $\inprod{\alpha_j}{\alpha_i} = a_{ij}$. Let $Q = \oplus_{i \in I} \bbZ \alpha_i$ be the \textit{root lattice} of $\frakg$. Set $Q_+ =  \oplus_{i \in I} \bbZ_{\geq 0} \alpha_i$. Assume that  $\delta=\sum a_i \alpha_i$ and $c = \sum a_i^\vee \alpha_i^\vee $  are the smallest positive imaginary root and a central element of  $\frakg$, where $a_i$ and $a_i\chk$ are the numerical labels of the Dynkin diagrams of $\AffX$ and its dual, respectively. 
     Let $\{\omega_i\}_{i \in I}$ denote the \textit{fundamental weights} of $\frakg$, i.e., $\inprod{\omega_i}{\alpha_j}=\delta_{ij}$ for $i, j \in I$.

     Let $W$ be the \textit{affine Weyl group} of $\frakg$ (which is a subgroup of the general linear group of $\frakh^*$) generated by the \textit{simple reflections} $s_i(\lambda) = \lambda - \inprod{\lambda}{\alpha_i}\alpha_i, \lambda \in \frakh^*, i \in I$. Note that $w(\delta) = \delta$ for all $w \in W$. Set $I_0 = I \setminus \{0 \}$. Denote by $\finW$ the subgroup of $W$ generated by the simple reflections $s_i$ for $i \in I_0$. It is a finite group. 
     
     Take the nondegenerate symmetric bilinear form $(\cdot ,\cdot )$ on $\frakh^*$ invariant under the action of $W$, which is normalized uniquely by $(\lambda, \delta)=\langle \lambda, c \rangle  $ for $\lambda \in \frakh^*$. Define $D$ as the diagonal matrix $\Diag(d_0, \cdots, d_n)$ with $d_i = a_i\inv a_i^\vee$. Then $(\alpha_i, \alpha_j) = d_ia_{ij}$ for all $i, j \in I$.
       Let $\triangle$ be the \textit{root system} of $\frakg$, $\triangle^{\pm} = \triangle \cap (\pm Q_+)$ and let $\triangle^{\mathrm{re}} = \triangle \setminus \bbZ\delta$ be the set of \textit{real roots}.  For each $\alpha \in \triangle^{\mathrm{re}}$ we set $\tilde{d}_\alpha = \mathrm{max}(1, (\alpha, \alpha)/2)$. In particular, write $\tilde{d}_i$ simply for $ \tilde{d}_{\alpha_i} $. Then 
      \begin{equation*}
      	 \tilde{d}_i = \left\lbrace 
      	\begin{array}{ll}
      		1, \quad &\text{ if } r=1 \text{ or } \AffX = \AffAt, \\
      		d_i, \quad &\text{otherwise}.
      	\end{array}
      	\right. 
      \end{equation*} 
     
     Denote by $\mathring{A} = (a_{ij})_{i,j \in I_0}$ the Cartan matrix of finite type, and let $\fing$ be the associated simple finite-dimensional Lie algebra. Then $\{\alpha_i \}_{i \in I_0}$ is a set of simple roots for $\fing$. Let $\finQ = \oplus_{i\in I_0}\bbZ \alpha_i$ be the root lattice for $\fing$, $\widetilde{P}$ the weight lattice of the euclidean space $\bbR\otimes_\bbZ \finQ \subset \frakh^*$ defined as $\tP= \oplus_{i \in I_0}\bbZ \tomega_i$, where $(\tomega_i, \alpha_j) = \delta_{ij}\tilde{d}_i$.  Then $\finQ$ can be naturally embedded into $\tP$, which provides a $W$-invariant action on $\frakh^*$ by $x(\lambda) = \lambda - (x, \lambda)\delta$ for $x \in \tP, \lambda \in \frakh^*$. 
    
    Define the \textit{extended Weyl group} by $\tW = \finW \ltimes \tP$. We also have $\tW = W \ltimes \scrT$, where $\scrT = \{w \in \tW | w(\triangle^+) \subset \triangle^+ \}$, which is a subgroup of the group of the Dynkin diagram automorphisms. An expression for $w \in \tW$ is called \textit{reduced} if $w = \tau s_{i_1}\cdots s_{i_l}$, where $\tau \in \scrT$ and $l$ is minimal. We call the minimal integer $l$ the \textit{length} of $w$, and denote it by $l(w)$. 
  
         \subsection{Quantum affine algebras}  
     	The \textit{quantum  affine algebra} $U_q(\frakg)$ in the Drinfeld-Jimbo realization \cite{Drin85,Jim85} is the unital associative algebra over $\bbC$ generated by $X_i^+$, $X_i^-$, $K_i^{\pm1}$, $i \in I$ with the following relations:
     	\begin{align}
     		\label{Relations:KK}&K_iK_i\inv = K_i\inv K_i = 1, \quad K_iK_j = K_jK_i, \\
     		\label{Relations:commutation}& K_iX_j^\pm K_i\inv = q_i^{\pm a_{ij}}X_j^{\pm}, \\
     		\label{Relations:XX}&X_i^+X_j^--X_j^-X_i^+= \delta_{ij}\frac{K_i-K_i\inv}{q_i - q_i\inv},  \\
     			\label{Relations:XX+}& \sum_{k = 0}^{1-a_{ij}}(-1)^k{1-a_{ij} \brack k}_{q_i}(X_i^\pm)^{k} X_j^\pm (X_i^\pm)^{1- a_{ij}-k} = 0,  \quad \text{ for } i \neq j,
     	\end{align}
     	where $q \in \bbC^\times$ is not a root of unity and $q_i = q^{d_i}$.  Here we have used the standard notations:  
     	\[ [m]_q = \frac{q^m-q^{-m}}{q-q\inv}, \quad [m]_q^! = [m]_q[m-1]_q\cdots [1]_q, \quad	{m \brack r}_{q} = \frac{[m]^!_q}{[r]^!_q[m-r]^!_q}. \]
     	In particular, denote $[m]_{q_i}$ by $[m]_i$ for simplicity.  
     	
     	\vspace{.3cm}

     		Let  $U^0$ be the commutative subalgebra of $U:=U_q(\frakg)$ generated by $K_i, K_i\inv, i \in I$. It is clear that each element in $U^0$ is a linear combination of the  monomials $ K_\beta:= K_0^{b_0}K_1^{b_1}\cdots K_n^{b_n} $ for $ \beta =\sum_{i\in I} b_i\alpha_i \in Q. $ 
     	In particular, $K_\delta$ is a central element in $U$. Let $U^+$ (resp. $U^-$) denote the span of monomials in $\Xzh{i}$ (resp. $X_i^-$). Recall that $U$ has a canonical triangular decomposition $U \cong U^-\otimes U^0\otimes U^+$. For later use, we note that $U^+$ is graded by $Q_+$ in the usually way: $U^+ = \oplus_{\beta\in Q_+}U^+_\beta$.  
     	
     	Let us recall the Hopf algebra structure of  $\Uqg$ with the coproduct $\Delta$, the antipode $S$, the counit $\epsilon$ defined as follows:
     	\begin{align*}  		
     		& \Delta(K_i)=K_i\otimes K_i, \quad \Delta(X_i^+)=X_i^+\otimes 1+K_i\otimes X_i^+, \\
     		& \Delta(X_i^-)=X_i^-\otimes K_i\inv + 1\otimes X_i^-, \\
     		&S(X_i^+)=-K_i\inv X_i^+, \quad S(X_i^-)=-X_i^- K_i, \quad S(K_i)=K_i\inv, \\
     		&\epsilon(X_i^+)=0=\epsilon(X_i^-), \quad \epsilon(K_i)=1.
     	\end{align*}

     	\vspace{.3cm}
     	There exists another presentation of $\Uqg$ due to Drinfeld \cite{Drin87}. Just like the realizations of the affine Kac-Moody algebras $\frakg$ as (twisted) loop algebras, this presentation of $\Uqg$ is generated by the Drinfeld's ``loop-like'' generators. 
     	
     	Consider the root datum $(X_N, \sigma)$ with $\sigma$ a diagram automorphism of $X_N$ of order $r$. Let $\barA = (\bara_{ij})_{1 \leq i, j \leq N}$ be the Cartan matrix of the type $X_N$, and let $\omega$ be a fixed primitive $r$-th root of unity. Note that if $r = 1$ (i.e., $\sigma$ is an identity) we have $N = n$, $\barA = \finA$; if $r > 1$, then $X_N$ is one of the simply laced  types: $A_N (N \geq 2)$, $D_{n+1} (n \geq 2)$, $E_6$. We use $\bari \in I_0$ to stand for one representative of the $\sigma$-orbit of $i$ on $\{1, 2, \cdots, N\}$ such that  $\bari \leq \sigma^s(i)$ for any $s$.  
     Take the set of simple roots $\{\baralpha_i \}_{1 \leq i \leq N}$  and the normalized bilinear form $( , )$  (by abuse of notation) such that $(\baralpha_i, \baralpha_j) = d_i a_{ij}$ if $r=1$, otherwise $(\baralpha_i, \baralpha_j) =  \bara_{ij}$ for $1 \leq i, j \leq N$.  
     
      The quantum affine algebra $\Uqg$ (add the central elements $K_\delta^{\pm 1/2}$) is isomorphic to the algebra generated by $x_{i, k}^\pm (1 \leq i \leq N, k \in \bbZ)$, $h_{i,k} (1 \leq i \leq N, k \in \bbZ\setminus\{0\})$, $k_i^{\pm 1} (1 \leq i \leq N)$ and the central elements $C^{\pm 1/2}$, subject to the following relations:
     \begin{align}
     	\nonumber&x_{\sigma(i),k}^\pm = \omega^kx_{i,k}^\pm, h_{\sigma(i),k}^\pm = \omega^kh_{i,k}^\pm, k_{\sigma(i)}^{\pm 1} = k_i^{\pm 1}, \\
     	\nonumber&k_ik_i\inv = k_i\inv k_i = 1, k_ik_j = k_jk_i, k_ih_{j,l}= h_{j,l}k_i,\\
     	\nonumber&k_ix_{j,k}^\pm = {q_{\bari}}^{\pm a_{\bari\barj}}x_{j,k}^\pm k_i, \\
     	\label{DrinfeldRelations}&[h_{i,k}, h_{j,l}] = \delta_{k,-l}\frac{1}{k} \Big(\sum_{s=1}^r [\frac{k(\baralpha_i, \baralpha_{\sigma^s(j)})}{d_\bari}]_\bari \;\omega^{ks} \Big) \frac{C^k-C^{-k}}{q_\bari - {q_\bari}\inv},\\
     	\nonumber&[h_{i,k}, x_{j,l}^\pm]=\pm \frac{1}{k}\Big(\sum_{s=1}^r [\frac{k(\baralpha_i, \baralpha_{\sigma^s(j)})}{d_\bari}]_\bari \; \omega^{ks} \Big) C^{\mp |k|/2}x_{j,k+l}^\pm,\\
     	\nonumber&[x_{i,k}^+, x_{j,l}^-]=\Big(\sum_{s=1}^r\frac{ \delta_{\sigma^s(i)j}\omega^{sl}}{\tilde{d}_\bari}\Big) \frac{C^{(k-l)/2}\psi_{i, k+l}^+ - C^{-(k-l)/2}\psi_{i, k+l}^-}{q_\bari - {q_\bari}\inv},  
     \end{align}
     where $\psi_{i,k}^\pm$'s are the elements determined by the following identity of the formal power series in $z$: 
 \begin{align}\label{HHseries}
 	  \sum_{k=0}^{\infty}\psi_{i,\pm k}^\pm z ^{\pm k} = k_i^{\pm 1} \mathrm{exp}\Big(\pm (q_\bari - {q_\bari}\inv) \sum_{l=1}^\infty h_{i, \pm l}z^{\pm l}\Big),    
 \end{align}
     together with the \textit{quantum Serre-Drinfeld relations}, whose explicit forms will be not  used in this paper. One can refer to \cite{Drin87} for more details and  to \cite{Beck94,Jing96} and \cite{Dami98,Dami12,Dami15}\footnote{The author used the notations ${\widetilde{H}}^{\pm}_{i,l}$, $H_{i,l}$, which are related with $ \psi_{i,l}^{\pm} $, $ h_{i,l} $ defined in this note by  $ {\widetilde{H}}^{\pm}_{i,l}=C^{l/2}k_i^{\mp1}\psi_{i,l}^{\pm}$ and $H_{i,l} = C^{l/2}h_{i,l}$.} for a proof. 
     
     Under the isomorphism, we have $X_i^\pm = x_{i,0}^\pm$, $K_i^{\pm1} = k_i^{\pm1}$ for $i \in I_0$, and $K_\delta = C$. 
     Note that $\psi_{i, -k}^+ = \psi_{i,k}^- =0$ for any positive integers $k$, and $\psi_{i,0}^\pm = k_i^{\pm 1}$ from the identity (\ref{HHseries}).
     
     From the relations in Drinfeld presentation, $\Uqg$ is essentially generated by the generators  $x_{i, \td_ik}^\pm (i \in I_0, k \in \bbZ)$, $h_{i,\td_ik} (i \in I_0, k \in \bbZ\setminus\{0\})$, $k_i^{\pm 1} (i \in I_0)$ and the central elements $C^{\pm 1/2}$ (see \cite[Proposition 4.25]{Dami12}). Moreover, the quantum affine algebra $U$ has a triangular decomposition \cite{CP94,CP98}:
     \begin{equation}\label{Triang}
     	\Uqg \cong  U(\leqslant) \otimes U(0) \otimes U(\geqslant)
      \end{equation}
     where $U(\geqslant)$ (resp. $U(\leqslant)$) is the subalgebra generated by $x_{i, \td_ik}^+ $ (resp. $x_{i, \td_ik}^- $), $ i \in I_0$, $k \in \bbZ$, and $U(0)$ is the subalgebra generated by $C^{\pm 1/2}$, $k_i^\pm$, $h_{i,k}, i\in I_0, k \in \bbZ\setminus\{0\}$.

     \section{Highest $\ell$-weight representations with finite weight multiplicities}  \label{Sec:Highestweightmodule}
     
     In this section, we recall basic notations of representations over quantum affine algebras: weight modules, $\ell$-weights, and highest $\ell$-weight modules. Most of the definitions and results in this section are well-known, one can refer to \cite{CP91,MY14}.

     \subsection{Highest $\ell$-weight modules} We begin with the notion of highest $\ell$-weight modules. 
     Thanks to the Hopf algebra structure of $U^0$ (inherits from $U$), the set of all \emph{algebra characters} of $U^0$, i.e., all algebra homomorphisms from $U^0$ to $\bbC$,  has an abelian group structure, the addition and the inverse are given by 
          \[(\lambda + \mu)(u) = (\lambda\otimes \mu)\circ \Delta(u), \quad (-\lambda)(u) = \lambda\circ S(u) \]
     for any algebra characters $\lambda, \mu$, and $u \in U^0$. Denote this group simply by $(\scrX, +)$. Any $\beta \in \frakh^*$ induces a character in  $\scrX$ by assigning $  K_i $ to $ q^{(\beta, \alpha_i)} $ for $i\in I$, which is unique up to a constant multiple of $\delta$, so we still denote it by $\beta\in \scrX$.

     For a $\Uqg$-module $V$ and $\lambda \in \scrX$, define 
     \[V_\lambda = \{v \in V \; |\;  u.v = \lambda(u)v, \forall u \in U^0\}.\]
    By the defining relations (\ref{Relations:commutation}) we have $ X_i^\pm.V_\lambda \subset V_{\lambda \pm \alpha_i} $. If $V_\lambda$ is nonzero, then we say $\lambda$ is a \emph{weight} of $V$, and $V_\lambda$ is a \emph{weight space} of \emph{weight} $\lambda$, a nonzero vector $v \in V_\lambda$ is called a \emph{weight vector} of weight $\lambda$. If the weight space $V_\lambda$ is finite-dimensional, then $\dim V_\lambda$ is called the \emph{multiplicity} of the weight $\lambda$. Call 
     $V$  a \emph{weight module} if $V = \oplus_\lambda V_\lambda$. Moreover, a weight module $V$ is said to be \emph{multiplicity-free} if $\dim V_\lambda \leq 1$ for all $\lambda\in \scrX$.

     \emph{Throughout this note, we assume that 
     	the central element $C$ acts trivially on a $\Uqg$-module.} So any weight $\lambda$ of  a $\Uqg$-module is \emph{level-zero}, that is, $\lambda(K_\delta) = 1$.

     Note that the actions of $\psi_{i, k}^\pm$'s on a $\Uqg$-module commute with each other by (\ref{DrinfeldRelations}) and (\ref{HHseries}). For a weight $\lambda$ of $V$ with finite multiplicity, we may refine the weight space $V_\lambda$ as 
     \[V_\lambda = \bigoplus_{\bgamma: \wt(\bgamma)=\lambda}V_\bgamma, \]
     \[V_{\bgamma} = \{v \in V_\lambda\; | \;\forall \;1 \leq i \leq N, k \geq 0, \exists m \in \bbZ_{>0}, (\psi_{i, \pm k}^\pm - \gamma_{i, \pm k}^\pm)^m.v=0 \}, \]
     where $\bgamma = (\gamma_{i, \pm k}^\pm)_{1 \leq i \leq N, k \in \bbZ_{\geq 0}}$ is any $N$-tuple of sequences of complex numbers satisfying that $\gamma_{i, 0}^+\gamma_{i, 0}^-=1$ and $\gamma_{\sigma(i), \pm k}^\pm = \omega^{\pm k}\gamma_{i, \pm k}^\pm$ for all $1 \leq i \leq N$, and we associate $\bgamma$ with a level-zero weight $\wt(\bgamma) \in \scrX$ by setting $\wt(\bgamma)(K_i) = \gamma_{i,0}^+$ for all $i \in I_0$. Call such a sequence $\bgamma$ an \emph{$\ell$-weight}, $V_\bgamma$ the \emph{$\ell$-weight space} of $\bgamma$ if $V_\bgamma$ is not zero.  

        Given an $\ell$-weight $\bgamma$. The defining relations in the Drinfeld presentation imply that $\bgamma$ is completely determined by the tuple of complex numbers   $ (\gamma_{i, \pm \td_i k}^\pm)_{i \in I_0, k \in \bbZ_{\geq 0}} $. Note that  $\gamma_{i, k}^\pm$'s for $\td_i \nmid k$ are zero. Hence we may write 
       $ \bgamma \equiv (\gamma_{i, \pm \td_i k}^\pm)_{i \in I_0, k \in \bbZ_{\geq 0}} $ directly 
         without any ambiguity. 
         
         Now we can define the highest $\ell$-weight modules.
     \begin{definition}
     We say $V$ is a \emph{highest $\ell$-weight modules} of highest $\ell$-weight $\bgamma$ if $V = \Uqg.v $ for some non-zero vector $v \in V$ such that 
     $ x_{i, k}^+.v =0$ for $ 1 \leq i \leq N, k \in \bbZ $, and $ \psi_{i, \pm k}^\pm.v = \gamma_{i, \pm k}^\pm v$ for  $ 1 \leq i \leq N, k \in \bbZ_{\geq 0} $. By $ (\ref{Triang}) $ $\dim V_\bgamma =1$, so $v$ is unique up to a scalar; we call it the \emph{highest $\ell$-weight vector} of $V$.  
 \end{definition}

     \subsection{The classification theorem: rationality}\label{Subsection:Affineosci} 
In this subsection we give the classification of simple highest $\ell$-weight modules with finite weight multiplicity, which appeared in \cite{MY14} for untwisted cases.

     	We say an $\ell$-weight $\bsf=(f_{i, \pm \td_ik}^\pm)_{i\in I_0, k \in \bbZ_{\geq 0}}$ is  \emph{rational} if there is a tuple of complex-valued rational functions $ (f_i(z))_{i\in I_0} $ in a formal variable $z$ such that for each $i\in I_0$, $f_i(z)$ is regular at $0$ and $\infty$, $f_i(0)f_i(\infty) = 1$ and 
     	\[\sum_{k=0}^\infty f_{i, \td_ik}^+z^k = f_i(z) = \sum_{k=0}^\infty f_{i, -\td_ik}^-z^{-k}  \]
     	in the sense that the left and right hand sides are the Laurent expansions of $f_i(z)$ at $0$ and $\infty$,  respectively.    
     	
     Let $\calR$ be the set of all rational $\ell$-weights. Then $\calR$ forms an abelian group with the group operation $(\bsf, \bsg) \mapsto \bsf\bsg$ being given by component-wise multiplication of the corresponding tuples of rational functions. 
     In what follows, we do not always distinguish between a rational $\ell$-weight $\bsf$ and the corresponding tuple $(f_i(z))_{i\in I_0}$ of rational functions. 
     
     Recall from \cite{CP91,CP98} that simple finite-dimensional modules of $\Uqg$ are highest $\ell$-weight modules, and their highest $\ell$-weights $\bsf$ are parametrized by the tuples of the \emph{Drinfeld polynomials}. More precisely,  there exists a tuple of polynomials $(P_i(z))_{i\in I_0}$ with all $P_i(z)$ having constant coefficient $1$ such that  $\bsf$ satisfies that  for $i \in I_0$, 
     \[ f_i(z) = \begin{dcases}
     	q_n^{2\deg P_n} \frac{P_n(q_n^{-4}z)}{P_n(z)} & \text{if}\;  (X_N^{(r)}, i) = (A_{2n}^{(2)}, n), \\
     	q_i^{\deg P_i} \frac{P_i(q_i^{-2}z)}{P_i(z)}& \text{otherwise}.
     \end{dcases}
     \]
     Therefore, the highest $\ell$-weight of any simple finite-dimensional module is rational. 
     
     In general, we have the following  theorem.
     
     \begin{theorem}
     	Let $V$ be an irreducible highest $\ell$-weight module. Then all weight spaces of $V$ are finite-dimensional if and only if its highest $\ell$-weight $\bsf$ belongs to $\calR$. 
     \end{theorem}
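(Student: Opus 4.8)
The strategy is to reduce everything to the rank-one data attached to each node $i \in I_0$ and to read rationality off the rank of an explicit Gram matrix. Write $\lambda = \wt(\bsf)$ and let $v$ be a highest $\ell$-weight vector. First I would equip $V$ with a contravariant form: let $\tau$ be the anti-automorphism of $U$ fixing each $\psi_{i,k}^\pm$, interchanging $x_{i,k}^+ \leftrightarrow x_{i,k}^-$ (preserving the mode $k$), and inverting $C^{1/2}$. Since $V = U.v$ with $x_{i,k}^+.v = 0$, there is a unique symmetric form $\langle\,,\,\rangle$ on $V$ with $\langle v,v\rangle = 1$ and $\langle u.w, w'\rangle = \langle w, \tau(u).w'\rangle$. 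As $\tau$ reverses the weight grading, distinct weight spaces are orthogonal; and since $V$ is irreducible its radical (a submodule not containing $v$) is zero, so $\langle\,,\,\rangle$ is nondegenerate on each $V_\mu$. Consequently $\dim V_\mu$ equals the rank of the Gram matrix of any spanning set of $V_\mu$.

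I would then apply this to $V_{\lambda - \alpha_i}$, which by the triangular decomposition $(\ref{Triang})$ is spanned by $u_k := x_{i,\td_i k}^-.v$, $k \in \bbZ$. Using $\tau(x_{i,\td_i k}^-) = x_{i,\td_i k}^+$, the relation $x_{i,k}^+.v = 0$, and the Drinfeld commutator $(\ref{DrinfeldRelations})$ with $C = 1$, one computes $\langle u_k, u_l\rangle = c_{k+l}$, where $c_m$ is proportional to $f^+_{i,\td_i m} - f^-_{i,\td_i m}$ (the eigenvalues of $\psi^\pm_{i,\td_i m}$ on $v$, set to zero for $m$ of the wrong sign). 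In the untwisted case the Gram matrix is thus a genuine bi-infinite Hankel matrix; in the twisted case the relations $x_{\sigma(i),k}^-=\omega^k x_{i,k}^-$ kill part of the modes and modulate the rest by roots of unity, and after discarding the vanishing modes one is reduced, on each $\bbZ/r$-graded piece, to a Hankel matrix. By a Kronecker-type theorem, such a matrix has finite rank if and only if $(c_m)$ satisfies a two-sided linear recurrence, equivalently the series $\sum_{k\ge0} f^+_{i,\td_i k} z^k$ and $\sum_{k\ge0} f^-_{i,-\td_i k} z^{-k}$ are the Laurent expansions at $0$ and $\infty$ of one rational function $f_i(z)$; the normalization $f_i(0)f_i(\infty) = 1$ is forced by $\psi_{i,0}^+\psi_{i,0}^- = 1$. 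Hence $\dim V_{\lambda-\alpha_i} < \infty$ if and only if $f_i$ is rational. In particular, if all weight spaces of $V$ are finite-dimensional then every $f_i$ is rational, i.e. $\bsf \in \calR$; this settles the forward implication.

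For the converse I must upgrade finiteness from the simple-root directions to all weights. Here $V$ is the unique simple highest $\ell$-weight module of highest $\ell$-weight $\bsf$, so it suffices to produce one highest $\ell$-weight module $M(\bsf)$ of highest $\ell$-weight $\bsf$ with all weight spaces finite-dimensional: then $V$ is a quotient of $M(\bsf)$ and $\dim V_\mu \le \dim M(\bsf)_\mu < \infty$. Since each $f_i(z)$ is rational I would factor it into elementary pieces $(1-az)^{\pm1}$ and realize $M(\bsf)$ as a tensor product of fundamental modules (for the polynomial, Drinfeld-type factors) and prefundamental, pole-type modules (for the denominators), each of which has finite-dimensional weight spaces; as the weights of $V$ lie in $\lambda - Q_+$ and the relevant category is closed under tensor products, the weight spaces of $M(\bsf)$, and hence of $V$, are finite-dimensional. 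For untwisted $\frakg$ this is the construction of \cite{MY14}; in the twisted cases I would run it through the uniform Drinfeld presentation above, where the $\sigma$-folding only rescales the data by roots of unity.

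The main obstacle is precisely this converse step, namely building, for an arbitrary rational $\bsf$, a highest $\ell$-weight module with all weight spaces finite-dimensional. The finite-dimensional (Drinfeld-polynomial) factors are classical, but the pole factors require genuinely infinite-dimensional building blocks, and one must check both that these have finite weight multiplicities and that finiteness is preserved under tensor product; carrying this out uniformly in the twisted cases, through the folded presentation, is the delicate point. By contrast the forward direction is robust, resting only on the Hankel structure of the contravariant form on $V_{\lambda-\alpha_i}$ together with the Kronecker-type theorem.
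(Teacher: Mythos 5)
Your proposal is essentially the argument of the reference the paper relies on: the paper's own ``proof'' consists of citing \cite{MY14} for the untwisted case and asserting that the twisted case is parallel, so you are reconstructing rather than replacing its route. Your forward direction is correct and complete, and in fact slightly over-engineered: to conclude that the Hankel matrix $(\langle u_k,u_l\rangle)_{k,l}$ has finite rank you only need that the vectors $u_k$ span a finite-dimensional space, so nondegeneracy of the contravariant form (hence irreducibility) is not needed for that implication --- it would only be needed to recover $\dim V_{\lambda-\alpha_i}$ as the exact rank. The Kronecker-type step then correctly yields that $\sum_k f^+_{i,\td_i k}z^k$ and $\sum_k f^-_{i,-\td_i k}z^{-k}$ are the expansions at $0$ and $\infty$ of a single rational function. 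In the converse, two points deserve care. First, you cannot factor $f_i$ into individual linear pieces $(1-az)^{\pm1}$ inside $\calR$: each elementary factor must itself be regular at $0$ and $\infty$ with $f(0)f(\infty)=1$, so the building blocks are degree-zero rational functions $c(1-az)/(1-ac^2z)$ of Verma/evaluation type together with the Drinfeld-polynomial factors; ``prefundamental'' modules with $\ell$-weight $(1-az)^{-1}$ live only over the Borel subalgebra, not over the full quantum affine algebra on which this theorem is stated. Second, the existence of those degree-zero building blocks with finite weight multiplicities at every node in higher rank, and the closure of the relevant category under tensor product, are precisely the content of \cite{MY14}; for the twisted types the paper offers only the assertion that the triangular decomposition (\ref{Triang}) makes the argument parallel, so the point you flag as delicate is exactly where the paper itself is thinnest.
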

     \begin{proof}
     	For the non-twisted cases, one can refer to \cite[Theorem 3.7]{MY14}	and the references therein. The proof of the twisted cases is essentially parallel to that of the untwisted cases thanks to the  triangular decomposition (\ref{Triang}) of the Drinfeld realization. 
     \end{proof}

     \section{Shiftability conditions and algebra homomorphisms}\label{Sec:Shiftability}
     In this section, the notion of the shiftability condition with respect to a generalized Cartan matrix will be introduced, and the compatible structures of the quantum affine algebras with the $n$-fold $q$-oscillator algebras are given from the $q$-shiftability condition. 

 \subsection{Shiftability conditions} \label{Subsec:ShiftCondition}
  Given any symmetrizable generalized Cartan matrix $A=(a_{ij})_{i,j\in I}$. Let $\calA$ be the Laurent polynomial ring over $\bbC$ in the variables $x_i, i \in I$, i.e., 
  $  \calA = \bbC[x_i^{\pm1}, i \in I].  $ 
   For each $i \in I$, consider the algebra automorphism $\zeta_i: \calA \rightarrow \calA$ given by $\zeta_i(x_j) = q_i^{-\delta_{ij}}x_j$ for $j \in I$. For any distinct $i, j \in I$, we say  a pair of Laurent polynomials $(f, g)$ in $\calA$ is \emph{$(i, j)$-shiftable} if $f, g$ satisfy the equation
 \[fg = \zeta_j\inv (f)\zeta_i\inv(g). \]
 
 Set $\{x\}_i := \frac{x-x\inv}{q_i- q_i\inv}$ for any unit $x$ in $\calA$, and write $\{x\} = \frac{x-x\inv}{q- q\inv}$ for simplicity.   Define the elements $y_i, y_i^{-1} \in \calA$ as follows: 
 \[y_i^{\pm 1} = \prod_{j \in I}x_j^{\pm a_{ji}}. \] 
 
 Consider the following system of equations with respect to the variables $\phi_i, i \in I$ in $\calA$:
 \begin{equation}\label{mainequ}
 	\left\{
 	\begin{array}{l}
 		\zeta_i(\phi_i)-\phi_i = \{y_i\}_i, \\[6pt]	
 		\phi_i\phi_j = \zeta_j\inv (\phi_i) \zeta_i\inv (\phi_j),
 	\end{array}
 	\quad i, j \in I, i \neq j.
 	\right.
 \end{equation} 
In general, this system of equations does not always have a solution. It depends on the choice of the generalized Cartan matrix $A$. Therefore, we can say $A$ admits the \textit{$q$-shiftability condition} when the corresponding system of equations  (\ref{mainequ}) has a solution. 

By a quick computation, we obtain a family of solutions to (\ref{mainequ}) for $A$ of types $A_2$ and $A_1^{(1)}$. 

\begin{example}\label{ExampleA}
	$ \mathrm{(i)} $ For the type $A_2$, a pair of Laurent polynomials $(\phi_1, \phi_2 )$, where $\phi_1 = \{qbx_1\}\{bx_1\inv x_2 \}$ and $\phi_2= \{qbx_1\inv x_2 \}\{bx_2\inv \}$ for each scalar $b \in \bbC^\times$ is a solution; 
	\vspace{.2cm}
	
	$ \mathrm{(ii)} $ For the type $A_1^{(1)}$, consider the Laurent polynomials $\phi_0 = \{qbx_0x_1\inv \}\{bx_0\inv x_1 \}$ and $\phi_1 = \{qbx_0\inv x_1 \}\{bx_0x_1\inv \}$ for any scalar $b \in \bbC^\times$. It is easy to check that $(\phi_0, \phi_1 )$ is a solution. 
\end{example}

In what follows, the $q$-shiftability condition for the generalized Cartan matrices of affine types will be investigated. Now assume that $A$ is an affine Cartan matrix as in Section \ref{Sec:Pre}. Then we have the first main result in this section.

\begin{theorem}\label{mainthm}
	There exists an $(n+1)$-tuple of Laurent polynomials in $\calA$ satisfying the system of equations $(\ref{mainequ})$ if and only if  $A$ is  the type $A_{n}^{(1)} (n \geq 1), C_{n}^{(1)} (n \geq 2), A_{2n}^{(2)} (n \geq 1)$ or $D_{n+1}^{(2)} (n \geq 2)$. 
\end{theorem}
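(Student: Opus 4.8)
The plan is to prove the two implications separately; the ``only if'' direction carries the real content.

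\emph{Sufficiency.} For each of the four families I would exhibit an explicit $(n+1)$-tuple $(\phi_i)_{i\in I}$ solving $(\ref{mainequ})$, generalizing Example~\ref{ExampleA}. The solutions are products of $q$-bracket factors $\{c\,m\}$ of monomials $m\in\calA$, arranged along a path (or, for $A_n^{(1)}$, a cycle) through the Dynkin diagram so that neighbouring $\phi_i,\phi_j$ share one bracket factor on which $\zeta_i,\zeta_j$ act reciprocally. With such an arrangement the first line of $(\ref{mainequ})$ collapses to a telescoping bracket identity as in the computation behind Example~\ref{ExampleA}, and the second line reduces to the matching of the shared factor. At the simply-laced nodes the factors follow the two-factor pattern of the example; the multiple-bond end nodes of $C_n^{(1)}, A_{2n}^{(2)}, D_{n+1}^{(2)}$ require factors adjusted according to $d_i$, for which the known $q$-oscillator realizations recalled in the introduction guide the correct shapes. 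Verifying the two equations is then a direct computation.

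\emph{Necessity: structural reduction.} Conversely, assume $(\ref{mainequ})$ is solvable. The operator $\zeta_i-\mathrm{id}$ sends a monomial $m$ to $(q_i^{-\deg_i m}-1)\,m$, so it is invertible on monomials with $\deg_i m\neq0$ and annihilates those with $\deg_i m=0$. Since the right-hand side $\{y_i\}_i$ of the first equation involves only $y_i$ (of $x_i$-degree $2$) and $y_i\inv$ (of $x_i$-degree $-2$), that equation forces
\[
\phi_i=c_i\,y_i+c_i'\,y_i\inv+g_i,\qquad g_i\in\bbC[x_k^{\pm1}:k\neq i],
\]
with $c_i,c_i'$ explicit nonzero scalars. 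This is the crucial reduction: all of the $x_i$-dependence of $\phi_i$ is concentrated in the two extremal monomials $y_i^{\pm1}$, and the only remaining freedom is the $x_i$-independent tail $g_i$.

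\emph{Necessity: isolating the type.} Substituting this form into the shiftability equation and using that the $\zeta$'s preserve multidegree, $\phi_i\phi_j=\zeta_j\inv(\phi_i)\zeta_i\inv(\phi_j)$ becomes, for every monomial $m$, the bilinear relation
\[
\sum_{m'm''=m}[\phi_i]_{m'}\,[\phi_j]_{m''}\bigl(q_j^{\deg_j m'}q_i^{\deg_i m''}-1\bigr)=0,
\]
where $[\,\cdot\,]_{m}$ denotes the coefficient of $m$. Comparing the extremal monomials in the $x_i$- and $x_j$-degrees, where only $y_i^{\pm1}$ and $y_j^{\pm1}$ contribute, produces arithmetic relations among $q_i,q_j$ and $a_{ij},a_{ji}$; I expect these, combined with the node analysis below, to force every edge type to be one that actually occurs in the four families, excluding in particular triple bonds (and hence $G_2^{(1)}, D_4^{(3)}$). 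The remaining diagrams are removed by a consistency analysis at each node together with its neighbours: the tails that the several incident shiftability conditions force on a single $g_i$ turn out to be mutually incompatible as soon as $i$ has valence $\geq3$ (excluding the branched types $B_n^{(1)}, D_n^{(1)}, E_{6,7,8}^{(1)}, A_{2n-1}^{(2)}\,(n\geq3)$) or lies on a multiple bond neither of whose endpoints is a leaf (excluding the interior-multiple-bond types $F_4^{(1)}, E_6^{(2)}$). As an affine Cartan matrix is connected, the surviving diagrams are precisely the cycle and the linear chains carrying multiplicities only at their ends, namely $A_n^{(1)}, C_n^{(1)}, A_{2n}^{(2)}, D_{n+1}^{(2)}$.

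\emph{Main obstacle.} The hard part throughout is the free tail $g_i$. The first equation determines $\phi_i$ only modulo $\bbC[x_k^{\pm1}:k\neq i]$, so the entire burden falls on showing how the shiftability conditions over all pairs jointly constrain---and ultimately obstruct---these tails. The single-edge constraints are local and routine, but the exclusion of branch nodes and of interior multiple bonds is genuinely global: no individual equation detects it, and tracking the tails $g_i$ as they propagate through the neighbourhoods of a node is the most delicate part of the argument.
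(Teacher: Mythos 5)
Your proposal follows essentially the same route as the paper's Appendix~\ref{Appendix}: your structural reduction $\phi_i=c_iy_i+c_i'y_i\inv+g_i$ with $g_i\in\bbC[x_k^{\pm1}:k\neq i]$ is exactly Lemma~\ref{lemformu} there, your edge-by-edge comparison of extremal monomials is the paper's rank-two degree-vector analysis (which rules out the $G_2$-type bond), and your node-consistency step excluding valence-$\geq 3$ nodes and interior multiple bonds is its Lemma~\ref{Lemma: degree} on $D_4$ and $F_4$ subdiagrams. The sketch is correct; what remains is the explicit rank-two bookkeeping (the table of admissible degrees of the tails) that the paper carries out.
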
 
The proof of  Theorem \ref{mainthm} will be given in Appendix \ref{Appendix}. Here we list all tuples of Laurent polynomials $(\phi_i)_{i \in I}$ satisfying (\ref{mainequ}) for each affine Cartan matrix $A$ in the theorem above. 
\begin{alignat*}{4}
	&A_{n}^{(1)} (n \geq 1): && \Big(\{qb_A z_0 \}\{b_A z_{1} \},  \{qb_A z_1 \}\{b_A z_{2} \}, \cdots, \{qb_A z_n \}\{b_A z_{0} \}\Big)\\
	&C_{n}^{(1)} (n \geq 2): && \Big(\{ q_0b_C z_1\inv\}_0\{ b_Cz_1\}_0, \{q_1b_C z_{1} \}_1\{b_C z_2\}_1, \\
	 & && \hspace{2.5cm} \cdots, \{q_{n-1}b_C z_{n-1} \}_{n-1}\{b_C z_{n}\}_{n-1}, \{q_nb_C z_{n}\}_n\{b_C z_{n}\inv \}_n \Big)\\
	&A_{2n}^{(2)} (n \geq 1):  && \Big( \{\imath q^{-\frac{3}{2}} z_1\}_0\{\imath q^{-\frac{1}{2}} z_{1}\}_0,  \{\imath q^{\frac{1}{2}} z_1\}_1\{\imath q^{-\frac{1}{2}} z_2\}_1,\\
	 & && \hspace{2.5cm} \cdots, \{\imath q^{\frac{1}{2}} z_{n-1}\}_{n-1}\{\imath q^{-\frac{1}{2}} z_{n}\}_{n-1}, \frac{\imath}{q_n-q_n\inv} \{\imath q^{\frac{1}{2}} z_n\}_n\Big)\\
	&D_{n+1}^{(2)} (n \geq 2): && \Big(\frac{\imath}{q_0-q_0\inv} \{\imath q\inv z_1\}_0,  \{\imath q z_1\}_1\{\imath q\inv z_2\}_1, \\
	& && \hspace{3.5cm}\cdots, 	\{\imath q z_{n-1}\}_{n-1}\{\imath q\inv z_{n}\}_{n-1}, \frac{\imath}{q_n-q_n\inv} \{\imath q z_n\}_n\Big)
\end{alignat*}
where $\imath = \sqrt{-1}$.  The elements $z_i \in \calA$ involved in the above solutions, and the relations in our notations are given as follows for each type: 
\[z_i = x_{i-1}\inv x_i, \; z_0=(z_1\cdots z_n)\inv, \; y_i = z_iz_{i+1}\inv,\; y_n=z_nz_0\inv, \;  b_{A_{n}^{(1)}}  \in \bbC^\times \; \text{in  } A_{n}^{(1)}; \]
\[z_i = x_{i-1}\inv x_{i}, \;  y_0 = z_1^{-2}, \; y_i=  z_iz_{i+1}\inv, \; y_n = z_n^2, \; b_{ C_{n}^{(1)} } =q^{-1/4} \text{ or } \imath q^{-1/4}\;  \text{in } C_{n}^{(1)};\]
\[z_i = x_{i-1}\inv x_{i},\; z_n = x_{n-1}^{-1}x_n^2, \;y_0 = z_1^{-2}, \; y_i=  z_iz_{i+1}\inv, \; y_n = z_n,  \;  b_{A_{2n}^{(2)}}=\imath q^{-\frac{1}{2}}\;  \text{in } A_{2n}^{(2)};  \]
\[ z_1 = x_0^{-2}x_1,\;  z_i = x_{i-1}\inv x_{i}, \; z_n = x_{n-1}\inv x_n^2, \; y_i=  z_iz_{i+1}\inv, \; y_n = z_n,\;   y_0 = z_1^{-1}, \; b_{D_{n+1}^{(2)}}=\imath q\inv \;\text{in } D_{n+1}^{(2)}.\]
By our convention, the Dynkin diagrams of the above four types and the corresponding $q_i=q^{d_i}$ are the following:

{\centering
	
	\begin{tikzpicture}[scale=1.2]
				\draw (0,0) -- (1,0);
				\draw (1,0) -- (1.8,0);
				\draw (2.2, 0) -- (3,0);
				\draw (3,0) -- (4,0);	
				\draw (0,0) -- (2, 1);
				\draw (2,1) -- (4,0);
				\draw[fill=white] (0,0) circle(.05);
				\node at (2, 0.8) {$  0  $};
				\node at (2, 1.3) { $ q $ };
				\draw[fill=white] (2, 1) circle(.05);
				\node at (0,-.3) {$ 1 $};
				\node at (0,.3) {$ q $};
				\draw[fill=white] (1,0) circle(.05);
				\node at (1,-.3) {$ 2 $};
				\node at (1,.3) {$ q $};
				\draw[fill=white] (3,0) circle(.05);
				\node at (3,-.3) {$ n-1 $};
				\node at (3,.3) {$ q $};
				\draw[fill=white] (4,0) circle(.05);
				\node at (4,-.3) {$ n $};		
				\node at (4,.3) {$ q $};		
				\node at (2,0) { ... };
				\node at (2,-.8) {$ (A^{(1)}_n)$};
			\end{tikzpicture}				
\hspace{1cm}		
			\begin{tikzpicture}[scale=1.2]
			\draw (0,0.05) -- (1,0.05);
			\draw (0,-0.05) -- (1,-0.05);
			\draw (0.6,0) -- (0.4,0.1);
			\draw (0.6,0) -- (0.4,-0.1);
			\draw (1,0) -- (1.8,0);
			\draw (2.2, 0) -- (3,0);
			\draw (3,0.05) -- (4, 0.05);
			\draw (3,-0.05) -- (4, -0.05);
			\draw (3.4,0)--(3.6, 0.1);
			\draw (3.4,-0)--(3.6, -0.1);
			\draw[fill=white] (0,0) circle(.05);
			\node at (0,-.3) {$ 0 $};
			\node at (0,.35) {$ q $};
			\draw[fill=white] (1,0) circle(.05);
			\node at (1,-.3) {$ 1 $};
			\node at (1,.4) {$ q^{\frac{1}{2}} $};
			\draw[fill=white] (3,0) circle(.05);
			\node at (3,-.3) {$ {n-1} $};
			\node at (3,.4) {$ q^{\frac{1}{2}} $};
			\draw[fill=white] (4,0) circle(.05);
			\node at (4,-.3) {$ n $};
			\node at (4,.35) {$ q $};
			\node at (2,0) {$ ... $};
			\node at (2,-0.8) {$(C_n^{(1)})$ };
		\end{tikzpicture}
	
	\vspace{.5cm}
	\begin{tikzpicture}[scale=1.2]
	\draw (0,0.05) -- (1,0.05);
	\draw (0,-0.05) -- (1,-0.05);
	\draw (0.6,0) -- (0.4,0.1);
	\draw (0.6,0) -- (0.4,-0.1);
	\draw (1,0) -- (1.8,0);
	\draw (2.2, 0) -- (3,0);
	\draw (3,0.05) -- (4, 0.05);
	\draw (3,-0.05) -- (4, -0.05);
	\draw (3.4,0.1)--(3.6, 0);
	\draw (3.4,-0.1)--(3.6, 0);
	\draw[fill=white] (0,0) circle(.05);
	\node at (0,-.3) {$ 0 $};
	\node at (0,.4) {$ q^2 $};
	\draw[fill=white] (1,0) circle(.05);
	\node at (1,-.3) {$ 1 $};
	\node at (1,.35) {$ q $};
	\draw[fill=white] (3,0) circle(.05);
	\node at (3,-.3) {$ {n-1} $};
	\node at (3,.35) {$ q $};
	\draw[fill=white] (4,0) circle(.05);
	\node at (4,-.3) {$ n $};
	\node at (4,.4) {$ q^{\frac{1}{2}} $};
	\node at (2,0) {$ ... $};
	\node at (2,-0.8) {$(A_{2n}^{(2)})$ };
\end{tikzpicture}
\hspace{1cm}
	\begin{tikzpicture}[scale=1.2]
	\draw (0,0.05) -- (1,0.05);
	\draw (0,-0.05) -- (1,-0.05);
	\draw (0.6,0.1) -- (0.4,0);
	\draw (0.6,-0.1) -- (0.4,0);
	\draw (1,0) -- (1.8,0);
	\draw (2.2, 0) -- (3,0);
	\draw (3,0.05) -- (4, 0.05);
	\draw (3,-0.05) -- (4, -0.05);
	\draw (3.4,0.1)--(3.6, 0);
	\draw (3.4,-0.1)--(3.6, 0);
	\draw[fill=white] (0,0) circle(.05);
	\node at (0,-.3) {$ 0 $};
	\node at (0,.35) {$ q $};
	\draw[fill=white] (1,0) circle(.05);
	\node at (1,-.3) {$ 1 $};
	\node at (1,.4) {$ q^{2} $};
	\draw[fill=white] (3,0) circle(.05);
	\node at (3,-.3) {$ {n-1} $};
	\node at (3,.4) {$ q^{2} $};
	\draw[fill=white] (4,0) circle(.05);
	\node at (4,-.3) {$ n $};
	\node at (4,.35) {$ q $};
	\node at (2,0) {$ ... $};
	\node at (2,-0.8) {$(D_{n+1}^{(2)})$ };
\end{tikzpicture}

}

\begin{remark}
	One  can also consider the shiftability condition for a generalized Cartan matrix $A$ in the classical sense. 
	More precisely, consider the polynomial ring $\calA^+ = \bbC[x_i, i \in I]$, and the algebra automorphisms ${\zeta}_i: \calA^+ \rightarrow \calA^+$ defined by ${\zeta}_i(x_j) = x_j - \delta_{ij}$ for all $j \in I$. Denote ${y}_i = \sum_{j \in I} a_{ij}x_j$. Then a similar system of equations in $\calA^+$ (replace $\{y_i\}_i$ in $ (\ref{mainequ})$ by $y_i$)  can be obtained. 
\end{remark}

    \subsection{Quantized oscillator algebra and algebra homomorphisms}\label{Subsec:quantumoscillator} One interesting application of the  $q$-shiftability condition is to study the compatible structures of  quantum affine algebras of types $X_N^{(r)}$ with the $n$-fold quantized oscillator algebra.

    Fix $\nu \in \bbC^\times$. The \textit{(symmetric) quantized oscillator algebra} $\scrB_\nu$ is the unital associative algebra over $\bbC$ generated by four elements $\bfazh$, $\bfa$, $\bfk^{\pm1}$ subject to the relations 
  \begin{alignat*}{4}
    & [\bfa, \bfazh]_\nu = \bfk, \quad && [\bfa, \bfazh]_{\nu\inv} = \bfk\inv, \quad && \bfk\bfk\inv = \bfk\inv\bfk =1, \\
    & \bfk \bfa \bfk\inv = \nu\inv \bfa, \quad && \bfk \bfazh\bfk\inv = \nu\bfazh, &&   	
  \end{alignat*}
where $[x, y]_\nu:=xy-\nu\inv yx$. Then we have $\bfazh\bfa = \{\bfk\}$, $\bfa\bfazh = \{\nu\bfk \}$ and $\{\bfk\} \bfazh = \bfazh \{\nu\bfk\}$, $ \bfa \{ \bfk \}=\{\nu\bfk \}\bfa$ in $\scrB_\nu$.  Here we define $\{\mathbf{x} \}=\{\mathbf{x} \}_\nu = (\mathbf{x}-\mathbf{x}\inv)/(\nu-\nu\inv)$ for $\mathbf{x} = \bfk$ or $\nu\bfk$.

One can easily check the following results.
\begin{lemma}\label{Lemma:automorphisms}
		$ \mathrm{(i)} $ There exists a unique $\bbC$-algebra automorphism (an involution) $\vartheta: \scrB_\nu \rightarrow \scrB_\nu$ such that 
	    $  \vartheta(\bfazh) =- \bfa $,  $ \vartheta(\bfa) = \bfazh $ and $   \vartheta(\bfk) = \nu\inv \bfk\inv $.\\
		$ \mathrm{(ii)} $ For any $b \in \bbC^\times$ and $m \in \bbZ$, there exists a family of $\bbC$-algebra automorphisms $\theta_{b,m}: \scrB_\nu \rightarrow \scrB_\nu$ such that 
	$ 	\theta_{b,m}(\bfa)=b\bfk^m\bfa $, $ \theta_{b,m}(\bfazh) = b\inv \bfazh\bfk^{-m} $ and $  \theta_{b,m}(\bfk^{\pm 1}) = \bfk^{\pm 1} $. 
\end{lemma}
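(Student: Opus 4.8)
The plan is to prove both parts by invoking the universal property of the presentation of $\scrB_\nu$ by generators and relations: in each case it suffices to prescribe the images of the generators $\bfazh, \bfa, \bfk^{\pm1}$ and then to verify that these images satisfy every defining relation; the prescription then extends uniquely to a $\bbC$-algebra endomorphism, and its uniqueness is automatic because $\bfazh, \bfa, \bfk^{\pm1}$ generate $\scrB_\nu$. The decisive computational tools will be the two derived identities $\bfazh\bfa = \{\bfk\}$ and $\bfa\bfazh = \{\nu\bfk\}$ recorded just after the definition of $\scrB_\nu$, since they let me evaluate the images of the two commutator relations in closed form.

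For part $\mathrm{(i)}$ I would set $\vartheta(\bfazh) = -\bfa$, $\vartheta(\bfa) = \bfazh$ and $\vartheta(\bfk^{\pm1}) = \nu^{\mp1}\bfk^{\mp1}$, and check the relations one at a time. The relation $\bfk\bfk\inv = 1$ is trivial, and the two conjugation relations $\bfk\bfa\bfk\inv = \nu\inv\bfa$ and $\bfk\bfazh\bfk\inv = \nu\bfazh$ follow at once once one notes that $\vartheta$ swaps $\bfa, \bfazh$ (up to sign) and sends $\bfk^{\pm1}$ to $\nu^{\mp1}\bfk^{\mp1}$, so that conjugation by $\bfk$ and by $\bfk\inv$ are interchanged. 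The real work is the two commutator relations: applying $\vartheta$ to $\bfa\bfazh - \nu\inv\bfazh\bfa$ yields $\nu\inv\bfa\bfazh - \bfazh\bfa$, and substituting $\bfa\bfazh = \{\nu\bfk\}$, $\bfazh\bfa = \{\bfk\}$ collapses this, after simplifying the $(\nu-\nu\inv)\inv$ factors, exactly to $\nu\inv\bfk\inv = \vartheta(\bfk)$; the second commutator relation reduces analogously to $\nu\bfk = \vartheta(\bfk\inv)$. Having checked that $\vartheta$ respects all the relations, I would establish that it is an automorphism by exhibiting its inverse explicitly — equivalently, by computing $\vartheta^{2}$ on the generators — which shows $\vartheta$ has finite order and hence is bijective.

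For part $\mathrm{(ii)}$ I would set $\theta_{b,m}(\bfa) = b\bfk^m\bfa$, $\theta_{b,m}(\bfazh) = b\inv\bfazh\bfk^{-m}$ and $\theta_{b,m}(\bfk^{\pm1}) = \bfk^{\pm1}$, and again verify the relations. The conjugation relations are immediate since $\theta_{b,m}$ fixes $\bfk$ and only rescales and shifts $\bfa, \bfazh$. For the commutator relations the key observation is that $\bfa\bfazh$ commutes with $\bfk$, because $\bfk(\bfa\bfazh)\bfk\inv = (\nu\inv\bfa)(\nu\bfazh) = \bfa\bfazh$; consequently the $\bfk^{\pm m}$ factors introduced by $\theta_{b,m}$ cancel, so $\theta_{b,m}$ fixes both $\bfa\bfazh$ and $\bfazh\bfa$, and the images of the commutator relations hold verbatim. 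Invertibility then needs no further computation: a direct check gives the composition law $\theta_{b',m'}\circ\theta_{b,m} = \theta_{bb',\,m+m'}$, so each $\theta_{b,m}$ has inverse $\theta_{b\inv,-m}$ and is an automorphism.

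The only delicate point is the bookkeeping in the commutator verification of part $\mathrm{(i)}$: since $\vartheta$ interchanges $\bfa$ and $\bfazh$ with a sign and sends $\bfk$ to $\nu\inv\bfk\inv$, the $\nu$-commutator and the $\nu\inv$-commutator get swapped, and one must track the powers of $\nu$ and the signs carefully, using $\bfa\bfazh = \{\nu\bfk\}$ and $\bfazh\bfa = \{\bfk\}$ to identify the outcome. Everything else is routine.
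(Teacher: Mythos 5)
Your verification is correct and is precisely the routine generators-and-relations check that the paper leaves to the reader (it offers no proof beyond ``one can easily check''), with the key identities $\bfazh\bfa=\{\bfk\}$, $\bfa\bfazh=\{\nu\bfk\}$ used exactly as intended. One small caution on your closing remark for part (i): $\vartheta^{2}$ is the sign automorphism $\bfazh\mapsto-\bfazh$, $\bfa\mapsto-\bfa$, $\bfk\mapsto\bfk$ rather than the identity, so $\vartheta$ has order $4$ (despite the paper's parenthetical ``involution''); your conclusion that it has finite order and is therefore bijective still stands.
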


Consider the algebra $\scrB_\nu^{\otimes n}$  of the $n$-fold tensor product of $ \scrB_\nu $. Denote the generators of its $i$-th component by $\bfazh_i$, $\bfa_i$ and $\bfk_i^{\pm 1}$, which satisfy the above relations.  Let $U_q(X_N^{(r)})$ be the quantum affine algebra $\Uqg$ of the type $X_N^{(r)}$ in Theorem \ref{mainthm}. For convenience, if $X$ is the type $A$ then we shall deal with $A_{n-1}^{(1)} (n \geq 2)$ instead of $A_{n}^{(1)} (n \geq 1)$ from now on. 

Fix a solution $(\phi_{i})_{i\in I}$ in Subsection \ref{Subsec:ShiftCondition}. We define the algebra homomorphism $\pi_{X_N^{(r)}}: U_q(X_N^{(r)}) \rightarrow \scrB_\nu^{\otimes n}$ in the following way: regard $\phi_i$ and $\zeta_i(\phi_i)$ as the images of $X_i^-X_i^+$ and $X_i^+X_i^-$ respectively under $\pi_{X_N^{(r)}}$ by setting $\bfk_i = q\inv b_A\inv z_i\inv$ for the type $A_{n-1}^{(1)}$, and $\bfk_i = \imath \nu^{-1/2}z_i\inv$ otherwise (Here we consider the solution with $b_C = \imath q^{-1/4}$ for the type $C_n^{(1)}$), where $\nu$ is defined as in the following proposition for each type. Then the relations (\ref{Relations:XX}) for $i=j$ holds under $\pi_{X_N^{(r)}}$ since $\phi_i$ satisfies $\zeta_i(\phi_i)-\phi_i = \{y_i\}_i$. In this sense, $\calA_0:=\bbC[\bfk_1^{\pm1}, \cdots, \bfk_n^{\pm1}]$ is a subalgebra of $\calA$, and $\pi_{X_N^{(r)}}(K_i) = y_i$ for $i\in I$. On the other hand, we choose $\pi_{X_N^{(r)}}(X_i^\pm)\in \scrB_\nu^{\otimes n}$ satisfying that 
\[\pi_{X_N^{(r)}}(X_i^\pm)f = \zeta_i^{\pm 1}(f)\pi_{X_N^{(r)}}(X_i^\pm) \]
for any $f \in \calA_0$. The above choice yields the relations (\ref{Relations:KK})-(\ref{Relations:XX+}) hold. 
Then we get the following algebra homomorphisms, which were obtained in \cite{Hay90,KOS15}. 

\begin{proposition}[\cite{Hay90,KOS15}]\label{Prop:alghom}
	For a parameter $z$, there exist algebra homomorphisms from $U_q(X_N^{(r)})$ to $\scrB_\nu^{\otimes n}[z, z\inv]$ defined as follows: 
	
		\noindent\emph{($ A_{n-1}^{(1)}, \nu =q $)}	\hspace{-3cm}\centerline{ $ \pi_{A_{n-1}^{(1)}, z} : U_q(A_{n-1}^{(1)}) \rightarrow \scrB_\nu^{\otimes n}[z, z\inv] $}	
	\[X^+_i \mapsto z^{\delta_{i,0}}\bfa_i\bfazh_{i+1}, \quad\quad X^-_i \mapsto z^{-\delta_{i,0}}\bfazh_i\bfa_{i+1}, \quad\quad K_i \mapsto \bfk_i\inv \bfk_{i+1}. \]
	In this type, we always read the index $i$ as $i$ modulo $ n$. 
	
		\noindent\emph{($ C_{n }^{(1)}, \nu = q^{\frac{1}{2}} $)}	\hspace{-3cm}\centerline{ $ \pi_{C_{n }^{(1)}, z} : U_q(C_{n }^{(1)} ) \rightarrow \scrB_{\nu}^{\otimes n}[z, z\inv] $}	
	 \begin{alignat*}{4}
		&X_0^+ \mapsto z(\bfazh_1)^2/[2]_\nu, \quad && X_0^- \mapsto  z^{-1}\bfa_1^2/[2]_\nu, \quad &&  K_0 \mapsto -\nu \bfk_1^2,\\
		&X_i^+ \mapsto \bfa_i\bfazh_{i+1}, \quad && X_i^- \mapsto \bfazh_i\bfa_{i+1}, \quad && K_i \mapsto \bfk_i\inv\bfk_{i+1}, \\
		& X_n^+ \mapsto \bfa_n^2/[2]_\nu, \quad && X_n^- \mapsto  (\bfazh_n)^2/[2]_\nu, \quad &&  K_n \mapsto -\nu\inv \bfk_n^{-2},
	\end{alignat*}
\emph{($ A_{2n }^{(2)}, \nu = q $)}	\hspace{-3cm}\centerline{ $ \pi_{A_{2n }^{(2)}, z } : U_q(A_{2n }^{(2)} ) \rightarrow \scrB_{\nu}^{\otimes n}[z, z\inv] $}	
\begin{alignat*}{4}
	&X_0^+ \mapsto z(\bfazh_1)^2/[2]_\nu, \quad \quad&& X_0^- \mapsto  z\inv\bfa_1^2/[2]_\nu, \quad\quad &&  K_0 \mapsto -\nu \bfk_1^2,\\
	&X_i^+ \mapsto \bfa_i\bfazh_{i+1}, \quad\quad && X_i^- \mapsto \bfazh_i\bfa_{i+1}, \quad\quad && K_i \mapsto \bfk_i\inv\bfk_{i+1}, \\
	& X_n^+ \mapsto \imath \tau_\nu\bfa_n, \quad \quad&& X_n^- \mapsto  \bfazh_n, \quad\quad &&  K_n \mapsto \imath \nu^{-\frac{1}{2}}\bfk_n^{-1},
\end{alignat*}
\emph{($ D_{n+1 }^{(2)}, \nu = q^2 $)}	\hspace{-3cm}\centerline{ $ \pi_{D_{n+1 }^{(2)}, z} : U_q(D_{n+1 }^{(2)} ) \rightarrow \scrB_{\nu}^{\otimes n}[z, z\inv] $}	
\begin{alignat*}{4}
	&X_0^+ \mapsto z\bfazh_1, \quad \quad\quad&& X_0^- \mapsto \imath\tau_\nu z\inv \bfa_1, \quad\quad \quad&&  K_0 \mapsto -\imath \nu^{\frac{1}{2}} \bfk_1,\\
	&X_i^+ \mapsto \bfa_i\bfazh_{i+1}, \quad\quad\quad && X_i^- \mapsto \bfazh_i\bfa_{i+1}, \quad\quad \quad&& K_i \mapsto \bfk_i\inv\bfk_{i+1}, \\
	& X_n^+ \mapsto \imath \tau_\nu\bfa_n, \quad\quad \quad&& X_n^- \mapsto  \bfazh_n, \quad \quad\quad&&  K_n \mapsto \imath \nu^{-\frac{1}{2}}\bfk_n^{-1},
\end{alignat*}	
where $\tau_\nu = {(\nu+1)}/{(\nu-1)}$. \qed
\end{proposition}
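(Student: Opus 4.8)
The plan is to show that each of the four assignments extends to an algebra homomorphism by checking that it preserves the Drinfeld--Jimbo relations (\ref{Relations:KK})--(\ref{Relations:XX+}); since $U_q(X_N^{(r)})$ is presented by these generators and relations, this suffices. All computations take place in $\scrB_\nu^{\otimes n}$, where I would repeatedly use the oscillator identities $\bfazh_i\bfa_i=\{\bfk_i\}$, $\bfa_i\bfazh_i=\{\nu\bfk_i\}$ and the weight relations $\bfk_i\bfa_i\bfk_i\inv=\nu\inv\bfa_i$, $\bfk_i\bfazh_i\bfk_i\inv=\nu\bfazh_i$, together with the fact that factors carrying distinct tensor indices commute. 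A preliminary remark removes the spectral parameter: $z$ occurs only on the node-$0$ generators, multiplying $X_0^+$ by $z$ and $X_0^-$ by $z\inv$, and within each defining relation every monomial carries the same excess of $X_0^+$ over $X_0^-$ factors; hence a common power of $z$ multiplies every term and cancels, so $z$ may be set to $1$ when checking the relations. This reduces the statement to the specialized maps $\pi_{X_N^{(r)}}$ described before the proposition, and the first task is to confirm that the explicit images realize that abstract construction, i.e. $\pi(X_i^-X_i^+)=\phi_i$ and $\pi(X_i^+X_i^-)=\zeta_i(\phi_i)$ under the identification of $\bfk_i$ in the statement; for instance in type $A$ one has $\bfazh_i\bfa_{i+1}\cdot\bfa_i\bfazh_{i+1}=\{\bfk_i\}\{\nu\bfk_{i+1}\}$, which equals $\phi_i=\{qb_Az_i\}\{b_Az_{i+1}\}$ once $\bfk_i=q\inv b_A\inv z_i\inv$ is substituted.

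Granting this identification, the Cartan-type relations are immediate and the off-diagonal relation reduces to locality. Relations (\ref{Relations:KK}) hold because each $\pi(K_i)$ is a monomial in the commuting invertible $\bfk_j^{\pm1}$. For (\ref{Relations:commutation}) I would combine $\pi(K_i)=y_i$ with the shift property $\pi(X_j^\pm)f=\zeta_j^{\pm1}(f)\pi(X_j^\pm)$ for $f\in\calA_0$ and the computation $\zeta_j(y_i)=q_j^{-a_{ji}}y_i=q_i^{-a_{ij}}y_i$, the last equality by $d_ja_{ji}=d_ia_{ij}$; moving $\pi(X_j^\pm)$ past $\pi(K_i)$ then produces exactly the factor $q_i^{\pm a_{ij}}$. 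The diagonal case $i=j$ of (\ref{Relations:XX}) is where the first shiftability equation enters: $\pi(X_i^+X_i^-)-\pi(X_i^-X_i^+)=\zeta_i(\phi_i)-\phi_i=\{y_i\}_i=(K_i-K_i\inv)/(q_i-q_i\inv)$ under $\pi$. For $i\neq j$ the supports of $\pi(X_i^\pm)$ and $\pi(X_j^\mp)$ meet in at most one tensor factor, so the commutator either vanishes by locality or collapses to a single-site rearrangement of the form $\bfa(\bfazh)^2$ (and its variants at the special nodes), whose cancellation is the content of the second shiftability equation $\phi_i\phi_j=\zeta_j\inv(\phi_i)\zeta_i\inv(\phi_j)$.

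The quantum Serre relations (\ref{Relations:XX+}) are the real obstacle, and I expect them to carry essentially all of the computational weight. For $a_{ij}=0$ they reduce to the commutativity already obtained, so only adjacent pairs matter, and I would organize the verification by the value of $a_{ij}$. For an interior simply-laced bond ($a_{ij}=-1$) the cubic relation $\sum_{k=0}^{2}(-1)^k{2\brack k}_{q_i}(X_i^\pm)^kX_j^\pm(X_i^\pm)^{2-k}=0$ becomes, after discarding the inert tensor factors, a short $q$-binomial identity in one or two copies of $\scrB_\nu$. The delicate cases are the short/long end bonds with $a_{ij}=-2$ and the exceptional node of $A_{2n}^{(2)}$, where the quartic relation must be verified and where the normalizations $[2]_\nu\inv$, $\tau_\nu=(\nu+1)/(\nu-1)$, and the factors of $\imath$ appearing in the statement are precisely what makes the cancellation occur; here I would derive once the squared-oscillator identities linking $\bfa^2,(\bfazh)^2$ to $\{\bfk\},\{\nu\bfk\}$ and then dispatch the four diagrams one at a time. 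I anticipate no conceptual difficulty beyond these quartic identities, which match the verifications already carried out in \cite{Hay90,KOS15}.
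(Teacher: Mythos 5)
Your plan is correct and takes essentially the same route as the paper, which states this proposition without proof (citing \cite{Hay90,KOS15}) and, in the paragraph preceding it, sketches exactly the verification you describe: identify the Cartan part via $K_i\mapsto y_i$, obtain the diagonal case of (\ref{Relations:XX}) from the first shiftability equation $\zeta_i(\phi_i)-\phi_i=\{y_i\}_i$, and reduce the remaining relations to the shift/locality property of the $\pi(X_i^\pm)$, with the Serre relations left to direct computation as in the cited references. Your spot checks are accurate — the $z$-grading argument, the identification $\pi(X_i^-X_i^+)=\{\bfk_i\}\{\nu\bfk_{i+1}\}=\phi_i$ in type $A$, and the single-site $q$-binomial identity $\{\bfk\}-[2]_\nu\{\nu\bfk\}+\{\nu^2\bfk\}=0$ underlying the simply-laced Serre relations.
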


     \section{Multiplicity-free weight modules}\label{Sec:construction}
 In this section, we construct the multiplicity-free weight representations over $\Uqg$ from the solutions and the algebra homomorphisms in the previous section. Throughout this section, we assume that $\Uqg$ is the quantum affine algebra of type  $X_N^{(r)}$ in Proposition \ref{Prop:alghom}.

 \subsection{Module structures on $\calA_0$}\label{Subsection:U0free} In order to construct the multiplicity-free weight representations, we first consider the auxiliary $\Uqg$-module structures on $\calA_0=\bbC[z_1^{\pm1}, z_2^{\pm1}, \cdots, z_n^{\pm1}]$. 
 
 Let us fix some notations here.  Note that $\alpha_0$ and $\alpha_n$ are long roots in the type $C_{n}^{(1)}$, while both of them are short roots in $ D_{n+1}^{(2)} $. In addition, by our assumption, $\alpha_0$ is long, $\alpha_n$ is short in $A_{2n}^{(2)}$.  We define a pair $\kappa:=(\kappa_1,\kappa_2)$ of signs such that $\kappa_1$, $\kappa_2$ are equal to $0$ or $1$, which depends on the length of the roots $\alpha_0$ and $\alpha_n$ for each type, that is, 
  \[ (\kappa_1,\kappa_2)=(1,1) \;\text{for}\;   C_n^{(1)} ,  \quad   (\kappa_1,\kappa_2)=(1,0)  \;\text{for}\;    A_{2n}^{(2)}, \quad  (\kappa_1,\kappa_2)=(0,0)  \;\text{for}\;   D_{n+1}^{(2)}.  \]

 Fix a solution $(\phi_i)_{i\in I}$ of (\ref{mainequ}), and recall the units $z_i$'s  for each type, and the shift operators $\zeta_i$ defined in Subsection \ref{Subsec:ShiftCondition}. Put $b =b_{X_{N}^{(r)}} $. 
 Then we have 
 \begin{theorem}\label{Thm:Constructions}
 	Let $z$ be a parameter valued in $\bbC^\times$. For  an $n$-tuple $\bff = (f_i)_{1\leq i\leq n}$ satisfying that $f_i$ is  $1$ or $bz_i-b\inv z_i\inv$, ${1\leq i\leq n}$, there exists a  $\Uqg$-module structure on the algebra $\calA_0$ for each type defined in the following:

For the type $A_{n-1}^{(1)}$,
\begin{align*}
		X_i^+.u = z^{\delta_{i,0}}f_i\zeta_i(\frac{\{bz_{i+1}\}}{f_{i+1}})\zeta_i(u),  \quad  X_i^-.u =  z^{-\delta_{i,0}}\zeta_i\inv(\frac{\{bz_{i}\}}{f_{i}})f_{i+1}\zeta_i\inv(u),
\end{align*}
and $ K_i^{\pm 1}.u = y_i^{\pm 1}u $,  for any $u \in \calA_0$. 

For other types, 
\begin{equation*}
	\begin{aligned}
		& X_0^+.u =  z\frac{\zeta_0(\phi_0)\zeta_0(u)}{\zeta_1\inv(f_1)^{\kappa_1}\zeta_0(f_1)}, \quad && X_0^-.u = z\inv f_1 \zeta_1(f_1)^{\kappa_1}\zeta_0\inv(u), \\ 
		&X_i^+.u =  f_i\zeta_i(\frac{\{bz_{i+1}\}_i}{f_{i+1}})\zeta_i(u),  \quad  && X_i^-.u =  \zeta_i\inv(\frac{\{bz_{i}\}_i}{f_{i}})f_{i+1}\zeta_i\inv(u), \\
		&X_n^+.u =  f_n\zeta_{n-1}\inv(f_n)^{\kappa_2}\zeta_n(u),  \quad && X_n^-.u =  \frac{\phi_n\zeta_n\inv(u)}{\zeta_n\inv(f_{n})\zeta_{n-1}(f_n)^{\kappa_2}},
	\end{aligned}
\end{equation*}
and $ K_i^{\pm 1}.u = y_i^{\pm 1}u $,  for any $u \in \calA_0$. 
 \end{theorem}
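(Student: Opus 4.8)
The plan is to obtain the module structure as a pullback, along the algebra homomorphism $\pi_{X_N^{(r)},z}$ of Proposition \ref{Prop:alghom}, of a module structure on $\calA_0$ over the $n$-fold oscillator algebra $\scrB_\nu^{\otimes n}$. The advantage is that, once $\calA_0$ is known to be a genuine $\scrB_\nu^{\otimes n}$-module, composing with the \emph{already established} homomorphism $\pi_{X_N^{(r)},z}$ automatically yields a $\Uqg$-module; in particular \emph{all} the defining relations (\ref{Relations:KK})--(\ref{Relations:XX+}), including the quantum Serre relations (\ref{Relations:XX+}), hold with no further computation. Thus the real content is twofold: (a) constructing the $\scrB_\nu^{\otimes n}$-action on $\calA_0$, and (b) checking that the operators so produced agree with the explicit formulas in the statement.

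For (a) I would use the decomposition $\calA_0 \cong \bigotimes_{j=1}^{n}\bbC[z_j^{\pm1}]$ and let the $j$-th tensor factor of $\scrB_\nu$ act on $\bbC[z_j^{\pm1}]$ as an oscillator Fock representation: $\bfk_j$ acts by multiplication by the unit prescribed in Proposition \ref{Prop:alghom} (namely $\bfk_j=q\inv b\inv z_j\inv$ in type $A_{n-1}^{(1)}$ and $\bfk_j=\imath\nu^{-1/2}z_j\inv$ otherwise), while $\bfa_j$ and $\bfazh_j$ act as shift-and-multiply operators, the shift being the $z_j$-component of $\zeta_j$ and the multiplicative coefficient being recorded by the datum $f_j\in\{1,\,bz_j-b\inv z_j\inv\}$. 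The key computation is that the $z_j$-shift rescales $\bfk_j$ by the correct power of $\nu$, so that the two products $\bfazh_j\bfa_j$ and $\bfa_j\bfazh_j$ reproduce $\{\bfk_j\}$ and $\{\nu\bfk_j\}$; the freedom in $f_j$ is precisely the freedom in distributing these products between $\bfa_j$ and $\bfazh_j$, and each of the two admissible values of $f_j$ is checked to satisfy the rank-one relations of $\scrB_\nu$. Since distinct tensor factors involve disjoint variables they commute, producing the $\scrB_\nu^{\otimes n}$-module structure.

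For (b) I would substitute the images $\pi_{X_N^{(r)},z}(X_i^\pm)$ and $\pi_{X_N^{(r)},z}(K_i)$ from Proposition \ref{Prop:alghom} into the site-wise action of (a) and simplify. At a generic node this is immediate: $X_i^+\mapsto\bfa_i\bfazh_{i+1}$ becomes the operator $u\mapsto f_i\,\zeta_i(\{bz_{i+1}\}_i/f_{i+1})\,\zeta_i(u)$ and $K_i\mapsto\bfk_i\inv\bfk_{i+1}=y_i$, matching the statement. As an independent confirmation one may check (\ref{Relations:commutation}) directly from $\zeta_j(y_i)=q_i^{-a_{ij}}y_i$ (which uses $d_ia_{ij}=d_ja_{ji}$), and the $i=j$ case of (\ref{Relations:XX}) directly from the first equation of (\ref{mainequ}): indeed $X_i^+X_i^-$ and $X_i^-X_i^+$ act by multiplication by $\zeta_i(\phi_i)$ and $\phi_i$ respectively, whose difference is $\{y_i\}_i$, while the $i\neq j$ case of (\ref{Relations:XX}) corresponds to the second equation of (\ref{mainequ}) together with the commuting of the $\zeta$'s.

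The main obstacle I anticipate is the bookkeeping at the two special nodes $0$ and $n$, where the Cartan entries equal $\pm2$, the images in Proposition \ref{Prop:alghom} are quadratic or linear in a single oscillator (such as $z(\bfazh_1)^2/[2]_\nu$, $\imath\tau_\nu\bfa_n$, or $\bfazh_n$), and the formulas in the statement carry the correction factors governed by $\kappa=(\kappa_1,\kappa_2)$ together with the explicit solutions $\phi_0,\phi_n$. Matching these hinges on the auxiliary identities that $\zeta_0\zeta_1$ and $\zeta_1\inv$ (respectively $\zeta_n\zeta_{n-1}$ and $\zeta_{n-1}\inv$) act by the same scalar on $z_1$ (respectively $z_n$); this is what makes the $f_1$- and $f_n$-dependent denominators cancel and confirms that $X_0^+X_0^-$ and $X_0^-X_0^+$ act by $\zeta_0(\phi_0)$ and $\phi_0$. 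Carrying out these cancellations type by type, and reconciling the normalizations $[2]_\nu$, $\tau_\nu$ and the factors $\imath$ hidden in $b=b_{X_N^{(r)}}$, is the most delicate part of the argument.
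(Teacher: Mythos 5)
Your strategy is sound, but it is genuinely different from the paper's. The paper proves Theorem~\ref{Thm:Constructions} by direct verification: it observes that $X_i^\pm.u=\zeta_i^{\pm1}(u)\,(X_i^\pm.1)$ and then checks the Drinfeld--Jimbo relations one at a time on $\calA_0$, reducing \eqref{Relations:XX} for $i=j$ to the first equation of \eqref{mainequ}, the case $|i-j|>1$ to disjointness of variables, and the case $|i-j|=1$ to the $(i,j)$-shiftability identities together with a handful of extra identities at the end nodes --- while the Serre relations \eqref{Relations:XX+} are only asserted. Your route --- equipping $\calA_0\cong\bigotimes_{j}\bbC[z_j^{\pm1}]$ with a site-wise $\scrB_\nu^{\otimes n}$-action by shift-and-multiply operators and pulling back along $\pi_{X_N^{(r)},z}$ of Proposition~\ref{Prop:alghom} --- buys exactly what the paper's proof is weakest on: every relation, including Serre, is inherited from the cited homomorphism, and the two admissible values of $f_j$ acquire a conceptual meaning as the two natural factorizations of $\bfazh_j\bfa_j=\{\bfk_j\}$ in $\bbC[z_j^{\pm1}]$. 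The price is the matching step, and one caveat deserves emphasis: the operators in the theorem are \emph{not} the naive site-wise pullback. Since $\bfk_j$ is identified with a multiple of $z_j\inv$, the factorization read off from the generic nodes produces $\{q b z_j\}$ where $\{\bfk_j\}=-\{qbz_j\}$ is required in type $A_{n-1}^{(1)}$ (so the $\vartheta$-involution of Lemma~\ref{Lemma:automorphisms}(i) must be built into the site-wise action), and at the doubled nodes the normalizations come out shifted --- for instance in type $C_n^{(1)}$ the theorem's formulas correspond to $X_0^-=z\inv\bfa_1^2$ and $X_0^+=z(\bfazh_1)^2/[2]_\nu^2$ rather than the $/[2]_\nu$ of Proposition~\ref{Prop:alghom}. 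Both defects are repaired by precomposing each tensor factor with suitable $\vartheta^{\varepsilon}\circ\theta_{c,0}$ from Lemma~\ref{Lemma:automorphisms} (a uniform $c=[2]_\nu^{1/2}$ leaves the generic nodes untouched and fixes both end nodes at once), so this is not a gap; but it is precisely the point where an execution treating the matching as ``immediate'' would stall, and you have rightly flagged it as the delicate part.
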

\begin{proof}
Taking $u=1$ in the above construction we have the precise expressions of the actions $X_i^\pm.1$. In addition, for any $u\in \calA_0$ we have $X_i^\pm.u = \zeta_i^{\pm 1}(u)X_i^\pm.1$. We can check each defining relation directly. For the relations (\ref{Relations:commutation}), we have 
	\begin{align*}
	(K_iX^\pm_jK_i\inv).u = y_i\Big(X_j^\pm.(y_i\inv u)\Big) = y_i\zeta_j^{\pm 1}(y_i\inv u) X_j^\pm.1 
	= y_i\zeta_j^{\pm1}(y_i\inv) X_j^\pm.u 
	= q_i^{\pm a_{ij}}X_j^\pm.u.
\end{align*}
For the relations (\ref{Relations:XX}), we can split it into three cases:

(1) If $i=j$, then we have 
\begin{align*}
	(X_i^+X_i^--X_i^-X_i^+).u = u\Big(\zeta_i(X_i^-.1)X_i^+.1 - \zeta_i\inv(X_i^+.1)X_i^-.1\Big) = u\Big(\zeta_i(\phi_i) - \phi_i\Big).
\end{align*}
Here $\phi_i:= \zeta_i\inv(X_i^+.1)X_i^-.1$, $i \in I$ is just a solution to the system (4.1) by the construction, which implies (2.3) for $i=j$ as desired. 

(2) If $|i-j| > 1$, then we have $\zeta_i(f_k)=f_k$ and $\zeta_i(\{bz_k \}_k) = \{bz_k \}_k$ for $k = j , j+1$. Similarly, $\zeta_j(f_k)=f_k$ and $\zeta_j(\{bz_k \}_k) = \{bz_k \}_k$ for $k = i , i+1$. Therefore, we have 
\[X_i^+X_j^-.u = \zeta_i\zeta_j\inv(u)\zeta_i(X_j^-.1)X_i^+.1 = \zeta_i\zeta_j\inv(u)(X_j^-.1)(X_i^+.1) =X_j^-X_i^+.u.  \]

(3) The case that $|i-j| =1$. We need to do more detailed calculations for each type. First  assume $a_{ij}a_{ji} =1$. Then we have to show 
\[\zeta_i\zeta_j\inv\Big(\frac{\{bz_j\}_j}{f_j}\Big)\zeta_i(f_{j+1}) f_i \zeta_i\Big(\frac{\{bz_{i+1} \}_i}{f_{i+1}}\Big) = \zeta_j\inv\Big(\frac{\{bz_j \}_j}{f_j}\Big)f_{j+1}\zeta_j\inv(f_i)\zeta_i\zeta_j\inv\Big(\frac{\{bz_{i+1}\}_i}{f_{i+1}}\Big). \]
If $j=i+1$ then 
$ \zeta_i(f_{j+1})=f_{j+1} $, $ \zeta_j\inv(f_i)=f_i $,  and $ \zeta_i(f_j)=\zeta_j\inv(f_j) $; 
while $i=j+1$ we have $\zeta_j\inv(f_{i+1})=f_{i+1}$, $\zeta_i(f_j)=f_j$, and $\zeta_i(f_i) = \zeta_j\inv(f_i)$. Both cases imply the above equality holds. When $X_N^{(r)}\neq A_{n-1}^{(1)}$, a direct computation yields the following equalities: 
\begin{equation*}
	\begin{aligned}
		&\{bz_1 \}_1\zeta_1\phi_0 = \phi_0\zeta_0\inv\{bz_1\}_1, \quad && f_1^{\kappa_1}\zeta_0^{\pm1}\zeta_1^{\pm1}(f_1) = f_1\zeta_1^{\mp1}(f_1)^{\kappa_1}, \\ 
		&\{bz_n \}_{n-1}\phi_n = \zeta_n^\inv(\{bz_n\}_{n-1})\zeta_{n-1}^\inv(\phi_n), \quad && f_n^{\kappa_2}\zeta_{n-1}^{\pm1}\zeta_{n}^{\pm1}(f_n) = f_n\zeta_{n-1}^{\mp1}(f_n)^{\kappa_2}. 
	\end{aligned}
\end{equation*}
Then similar arguments for the case that $a_{ij}a_{ji} = 2$ are true. Any tuple $(\phi_i)_i$ satisfying (\ref{mainequ}) and the choice of $(f_i)_i$ also guarantee that these actions hold under the quantum Serre relations (\ref{Relations:XX+}). 
\end{proof}

 
 Denote the above $\Uqg$-module on $\calA_0$ related to $z$ and $\bff$ by $S_z(\bff)$.  Recall $\pi_{X_N^{(r)}}(K_i)= y_i$ in the construction in Subsection \ref{Subsec:quantumoscillator}. Then we have 
 \begin{align}\label{Equa:centeraction}
 \pi_{X_N^{(r)}}(K_\delta)= \prod_{i \in I}y_i^{a_i}=\prod_{i \in I}\prod_{j\in I}x_j^{a_{ji}a_i}=\prod_{j \in I}x_j^{\sum_{i\in I}a_{ji}a_i}
 =1.
 \end{align}
  In particular, $K_\delta$ acts trivially on $S_z(\bff)$. Therefore $S_z(\bff)$ is finitely $U^0$-generated instead of $U^0$-diagonalizable when restricted as a $U^0$-module.
  
 Now let us explain the ``weighting'' procedure mentioned in the introduction. That is,  to a  $\Uqg$-module $S_z(\bff)$, we associate a weight module $\calW(S_z(\bff))$ in the following way. Consider the algebra homomorphism from $U^0$ to $\calA_0$ by assigning $K_i$ to $y_i$, $i\in I_0$, which induces a natural group homomorphism from the group of characters of $\calA_0$ to $\scrX$. For any character $\varphi $ of $\calA_0$, denote by $\frakm_\varphi (:=\ker \varphi)$ the corresponding maximal ideal of $\calA_0$. Extend $\alpha_j\in \scrX$ to a character of $\calA_0$ by setting $\alpha_j(y_i) =q_j^{a_{ji}}$, we still denote it by $\alpha_j$,
 then we have  
  \[ K_i.\frakm_\varphi S_z(\bff) \subset \frakm_\varphi S_z(\bff), \quad X_i^{\pm }.\frakm_\varphi S_z(\bff) \subset \frakm_{\varphi\pm \alpha_i}S_z(\bff).  \]
  
  Define
  \[\calW(S_z(\bff)) = \bigoplus_{\varphi}S_z(\bff)/\frakm_\varphi S_z(\bff), \]
  where $\varphi$ is taken over all characters of $\calA_0$. 
  \begin{corollary}
  	For any $\Uqg$-module $S_z(\bff)$, we have $\calW(S_z(\bff))$ is a weight module, and all its simple subquotients are multiplicity-free. 
  \end{corollary}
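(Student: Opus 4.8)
The plan is to read off the corollary from the very rigid structure that the weighting construction imposes on $\calW(S_z(\bff))$: it is an honest weight module assembled from one-dimensional pieces indexed by the characters of $\calA_0$, on which $\Uqg$ merely permutes the pieces along the root lattice. First I would pin down the weight spaces. Since $S_z(\bff) = \calA_0$ is free of rank one over itself, each quotient $V_\varphi := S_z(\bff)/\frakm_\varphi S_z(\bff) = \calA_0/\frakm_\varphi$ is one-dimensional, and because $K_i.u = y_iu$ the element $K_i$ acts on $V_\varphi$ by the scalar $\varphi(y_i)$. This defines a character $\lambda_\varphi \in \scrX$ with $\lambda_\varphi(K_i) = \varphi(y_i)$; it is level-zero since $\lambda_\varphi(K_\delta) = \varphi(\prod_{i\in I}y_i^{a_i}) = \varphi(1) = 1$ by $(\ref{Equa:centeraction})$. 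Thus each $V_\varphi$ is a weight space of weight $\lambda_\varphi$, and the displayed inclusions $K_i.\frakm_\varphi S_z(\bff)\subseteq \frakm_\varphi S_z(\bff)$ and $X_i^\pm.\frakm_\varphi S_z(\bff)\subseteq \frakm_{\varphi\pm\alpha_i} S_z(\bff)$ show both that the $\Uqg$-action descends to $\calW(S_z(\bff))$ and that $\calW(S_z(\bff)) = \bigoplus_\varphi V_\varphi$ is a weight module.

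Next I would organize the characters into orbits. Chasing the definition of the action through the shift operators identifies $\varphi\pm\alpha_i$ with $\varphi\circ\zeta_i^{\mp1}$, so the assignments $\varphi\mapsto \varphi\pm\alpha_i$ make the set of characters of $\calA_0$ into a $Q$-set, and $X_i^\pm$ sends $V_\varphi$ into $V_{\varphi\pm\alpha_i}$. Grouping by $Q$-orbit gives $\calW(S_z(\bff)) = \bigoplus_{\mathcal O} \calW_{\mathcal O}$, where $\mathcal O$ runs over the orbits and $\calW_{\mathcal O} = \bigoplus_{\varphi\in\mathcal O}V_\varphi$ is a $\Uqg$-submodule, so this is a direct sum of submodules.

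I would then prove each $\calW_{\mathcal O}$ is multiplicity-free. Two characters $\varphi_0+\beta$ and $\varphi_0+\beta'$ in one orbit have equal weights exactly when $\overline{\beta-\beta'} = 0$ in $\scrX$; since $q$ is not a root of unity the kernel of $Q\to\scrX$ is the radical $\bbZ\delta$ of the bilinear form, so equal weights force $\beta-\beta'\in\bbZ\delta$. The key point is that the shift by $\delta$, which is precomposition with $\prod_{i\in I}\zeta_i^{-a_i}$ and hence rescales each $x_k$ by $q^{a_k^\vee}$, acts trivially on $\calA_0$: indeed each generator $z_j=\prod_k x_k^{m_{jk}}$ has total $c$-degree $\sum_k a_k^\vee m_{jk}=0$, as one reads off from the explicit tuples $(z_j)$ listed after Theorem \ref{mainthm}. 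Hence $\beta-\beta'\in\bbZ\delta$ already gives $\varphi_0+\beta = \varphi_0+\beta'$ as characters, so distinct summands of $\calW_{\mathcal O}$ carry distinct weights and $\calW_{\mathcal O}$ is multiplicity-free.

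Finally I would pass to subquotients. A routine diagram chase shows that a simple subquotient of a two-term direct sum $A\oplus B$ of submodules is a subquotient of $A$ or of $B$ (test the submodule $(M\cap A+N)/N$ of the simple quotient $M/N$ against simplicity, and use the projection to $B$ in the remaining case); since a simple subquotient is generated by a single element, which lies in a finite partial sum of the $\calW_{\mathcal O}$, every simple subquotient of $\calW(S_z(\bff))$ is in fact a subquotient of a single $\calW_{\mathcal O}$. As multiplicity-freeness is inherited by subquotients of weight modules (weight spaces only shrink under passing to submodules and quotients), such a simple subquotient is multiplicity-free, which is the assertion. The main obstacle I anticipate is the key lemma that the $\delta$-shift is trivial on $\calA_0$, equivalently that each $z_j$ has vanishing $c$-degree; this is precisely where the affine nature of $A$ and the specific solutions of $(\ref{mainequ})$ are used, and it has to be checked from the explicit $(z_j)$ in each of the four types, since the corresponding statement is false for a general symmetrizable generalized Cartan matrix.
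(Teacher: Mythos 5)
Your proof is correct, and it follows the same basic route as the paper: decompose $\calW(S_z(\bff))$ into the one-dimensional pieces $V_\varphi$, observe that $K_i$ acts on $V_\varphi$ by $\varphi(y_i)$ and that $X_i^\pm$ carries $V_\varphi$ into $V_{\varphi\pm\alpha_i}$. The difference is that the paper compresses the second assertion into the single sentence ``since $X_i^{\pm}.V_\varphi \subset V_{\varphi\pm\alpha_i}$, the second assertion follows,'' whereas you supply the two steps that actually make this work: the splitting of $\calW(S_z(\bff))$ into $Q$-orbit submodules $\calW_{\mathcal O}$ (together with the reduction of a simple subquotient of a direct sum to a subquotient of one summand), and, crucially, the lemma that within a single orbit distinct characters have distinct weights because the kernel of $Q\to\scrX$ is $\bbZ\delta$ and the $\delta$-shift $\prod_i\zeta_i^{-a_i}$ rescales $x_k$ by $q^{a_k^\vee}$ and hence fixes every $z_j$. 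This last point is genuinely needed and is not a formality: the weight spaces of $\calW(S_z(\bff))$ itself can have dimension greater than one (the $y_i^{\pm1}$ generate a proper subalgebra of $\calA_0$, e.g.\ of index $n+1$ in type $A_{n-1}^{(1)}$), so without the triviality of the $\delta$-shift on $\calA_0$ a single orbit component could fail to be multiplicity-free and the corollary would not follow from the displayed inclusion alone. Your verification of the vanishing $c$-degree of each $z_j$ in the four types checks out, so your writeup is a complete version of the argument the paper only sketches.
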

\begin{proof}
	It is clear that $S_z(\bff)/\frakm_\varphi S_z(\bff)$ is $1$-dimensional and $K_i$ acts  diagonally. In particular, $K_\delta$ acts by $1$. The first assertion follows from the previous statements, and the $\lambda$-weight space
	$ \calW(S_z(\bff))_{\lambda} = \oplus_{\bar{\varphi}=\lambda}S_z(\bff)/\frakm_\varphi S_z(\bff)$, where $\bar{\varphi}$ means the image of $\varphi$ in $\scrX$. Since we have 
	\[ X_i^{\pm }.S_z(\bff)/\frakm_\varphi S_z(\bff) \subset S_z(\bff)/\frakm_{\varphi\pm \alpha_i}S_z(\bff)\]
	the second assertion follows. 
\end{proof}
  
   \begin{remark}
   	 In fact, the $\Uqg$-module $ \calW(S_z(\bff)) $ is a $q$-analog of the \emph{coherent family} in the sense of  \cite{Mat}. This ``weighting'' procedure was first suggested by O. Mathieu in the paper \cite{Nil16}. 
    \end{remark}
   
   Now let us study the possible highest weights of $\calW(S_z(\bff))$ when restricted as a $U_q(\fing)$-module.  
   Assume that the weight vector $1+\frakm_\varphi S_z(\bff)$ of $ \calW(S_z(\bff)) $ is a highest weight vector for some $\varphi$. Then we have $X_i^+.(1+\frakm_\varphi S_z(\bff))=0$ for $i\in I_0$, which implies that
   \begin{equation} \label{solutions} (\varphi+\alpha_i)(\zeta_i(\phi_i)) = 0 \quad \text{ for }\quad  i\in I_0. 
   	\end{equation}
   The weight $\lambda = \bar{\varphi}$ is level-zero, which is determined uniquely by the values $\lambda(K_i), i \in I_0$. Therefore, all level-zero weights can be seen automatically as weights over $U_q(\fing)$.  As a result, we can obtain the following result. 
   \begin{corollary}\label{Highestweight}
   	Let $\lambda \in \scrX$ be a  weight of   $\calW(S_z(\bff))$ for some $\bff$. If $\lambda$ is a highest $U_q(\fing)$-weight, then up to twistings by the automorphisms of $U_q(\fing)$, we have 
   	\begin{enumerate}[\rm (1)]
   		\item for the type $A_{n-1}^{(1)}$, the weight $\lambda$  is of the  form $\omega_0 +a \omega_s- (a+1) \omega_{s+1}$ for some $  a \in \bbC  $ and $s \in I$ up to a constant multiple of $\delta$;

   		\item for the type $C_{n}^{(1)}$, the weight $\lambda$ has the form 
   	$ 	\omega_0/2 + \omega_{n-1}-3\omega_n/2 $ or  $  (\omega_0 - \omega_{n})/2 $ up to a constant multiple of $\delta$;
   	
      	\item for the type $A_{2n}^{(2)}$ (resp.  $D_{n+1}^{(2)}$), the weight  $\lambda = (\lambda(K_0), \cdots, \lambda(K_n))$ is defined as
      	\[(-q, 1, \cdots, 1, \imath q^{-1/2}) \quad (\text{resp.} \quad (-\imath q, 1, \cdots, 1, \imath q\inv)).  \]
   	\end{enumerate}
   \end{corollary}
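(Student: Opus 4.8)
The plan is to turn the highest-weight condition into the explicit algebraic system $(\ref{solutions})$ and to solve it type by type. First I would make the character $\varphi$ concrete: since $\calA_0=\bbC[z_1^{\pm 1},\dots ,z_n^{\pm 1}]$, a character is a point $(c_1,\dots ,c_n)\in(\bbC^\times)^n$ with $c_j=\varphi(z_j)$, and I would fix the extension of $\alpha_i$ to $\calA_0$ by assigning to each $z_j$ a weight $\epsilon_j$ chosen so that $y_k=z_kz_{k+1}^{-1}$ corresponds to $\alpha_k$ and the prescription $\alpha_i(y_k)=q_i^{a_{ik}}$ holds; then $(\varphi+\alpha_i)(z_j)=c_j\,q^{(\alpha_i,\epsilon_j)}$. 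The structural input is that for every interior node $\zeta_i(\phi_i)$ factors as a product of two ``theta'' factors $\{bz_i\}_i\{bq_iz_{i+1}\}_i$, which follows directly from the form $\phi_i=\{q_ibz_i\}_i\{bz_{i+1}\}_i$ listed after Theorem \ref{mainthm}. Hence each equation $(\varphi+\alpha_i)(\zeta_i(\phi_i))=0$ reduces to the disjunction that one of the two factors vanishes at $\varphi+\alpha_i$, and each such vanishing is a single monomial equation pinning $c_i$ or $c_{i+1}$ to an explicit value.

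With this in hand I would solve the resulting combinatorial system. For the untwisted type $A_{n-1}^{(1)}$ the equations are indexed by $i\in I_0=\{1,\dots ,n-1\}$ (node $0$ contributes nothing, since the highest-weight condition is taken only with respect to $\fing$), and after the rescaling $u_i:=bc_i$ each reads ``$u_i\in\{\pm q^{-1}\}$ or $u_{i+1}\in\{\pm 1\}$''. I would show that the set $\{i:u_i=\pm 1\}$ is upward closed, hence a tail $\{k+1,\dots ,n\}$, which forces $u_1=\cdots =u_{k-1}=\pm q^{-1}$, leaves $u_k$ as the single free parameter, and sets $u_{k+1}=\cdots =u_n=\pm 1$. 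Reading off $\lambda(K_i)=\varphi(y_i)=c_i/c_{i+1}$ then yields exactly $\omega_0+a\omega_s-(a+1)\omega_{s+1}$ with $s=k-1\in I$ and $a$ recording $u_k$, once the signs are absorbed by the sign automorphisms $K_i\mapsto\pm K_i$ of $U_q(\fing)$.

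For the remaining types $C_n^{(1)}$, $A_{2n}^{(2)}$, $D_{n+1}^{(2)}$ I would run the same reduction, but now the boundary node $n$ contributes the genuinely different single- or double-factor relation coming from the special $\phi_n$ (e.g.\ $\tfrac{\imath}{q_n-q_n^{-1}}\{\imath q^{1/2}z_n\}_n$), which rigidly pins $c_n$; chaining the interior equations back from node $n$ leaves no free parameter, producing a finite list of $\varphi$, and computing $\lambda(K_i)=\varphi(y_i)$ and reducing modulo the available $U_q(\fing)$-automorphisms collapses the list to the one or two weights stated. Two bookkeeping points deserve care. The weight $\lambda=\bar\varphi$ is recovered only through the monomials $\varphi(y_i)$, so $\varphi$ is determined only up to a common rescaling of all $z_j$; this is precisely the ambiguity by a multiple of $\delta$ in the statement. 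Likewise the surviving signs and the scalars $\imath,\,q^{\pm 1/2}$ at the end nodes are governed by which automorphisms are actually available: the sign twists and diagram automorphisms act only through the indices $i\in I_0$, so $\lambda(K_0)$ and part of $\lambda(K_n)$ are invariants fixed by the level-zero relation $\prod_i\lambda(K_i)^{a_i}=1$, explaining why the twisted and $C$-type answers retain explicit signs and square roots.

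I expect the main obstacle to be the boundary analysis for the twisted and symplectic types: the end-node relations break the uniform interior pattern, and one must check both that the chained system is consistent (has a solution at all) and that, using only the genuinely available automorphisms of $U_q(\fing)$, the solution set collapses to precisely the short list claimed rather than to a larger family.
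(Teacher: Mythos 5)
Your proposal is correct and follows essentially the same route as the paper: it reduces the highest-weight condition to the system $(\varphi+\alpha_i)(\zeta_i(\phi_i))=0$, $i\in I_0$, exploits the two-factor form of $\zeta_i(\phi_i)$ to turn each equation into a disjunction, and solves the resulting chain by locating the breakpoint index (your ``upward closed tail'' is the paper's ``maximal $s$ with $\{qbm_s\}=0$'') before reading off $\lambda(K_i)=\varphi(y_i)$ and absorbing signs by automorphisms. The paper likewise works out only type $A_{n-1}^{(1)}$ in detail and leaves the boundary analysis for $C_n^{(1)}$, $A_{2n}^{(2)}$, $D_{n+1}^{(2)}$ to the same direct substitution you sketch.
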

   
\begin{proof}
   The result can be deduced directly from (\ref{solutions}). For example, in the type $ A_{n-1}^{(1)} $, these equations   (\ref{solutions}) become
   	 \begin{equation}\label{TypeA}
   	\begin{dcases}
   	m_0m_1\cdots m_{n-1} = 1, \\
   		\{qbm_i \}\{bm_{i+1} \} = 0, \quad 1 \leq i \leq n-1,
   	\end{dcases}
   \end{equation}
where we denote $ \varphi(z_i)$ by $ m_i  $ for $i\in I$. Let $\lambda = \bar{\varphi}$. Then $\lambda(K_i) = m_im_{i+1}\inv$. To solve the equations (\ref{TypeA}), we divide it into two cases: if $\{qbm_1 \}$ is not zero, then $ \{bm_{i} \} =0  $ for $i \geq 2$; if $\{qbm_1 \}$ is zero, then we assume that $s$ is the maximal index such that $\{qbm_s \}$ is zero, then $\{qbm_i \} =0$ for $i\leq s$ and $ \{bm_{j} \} =0 $ for $j\geq s+2$. In the first case, the possible solutions are $m_1=\pm b^{n-1}$, $m_i = \pm b\inv$ for $i\neq 1$. Then up to twistings by sign automorphisms of $U_q(\fing)$, we have the weight $\lambda$ is given by 
\[\lambda(K_0)=b^{-n}, \quad \lambda(K_1)=b^{n},\quad \lambda(K_i)=1, \quad i \geq 2, \]
which is of the form $(a+1)\omega_0-(a+1)\omega_1$ for some $a\in \bbC$. In the second case, $m_i = \pm q\inv b\inv$ for $0\leq i \leq s$, $m_{s+1}=\pm q^{s+1}b^{n-1}$, and $m_j=\pm b\inv$ otherwise. Then up to signs, we have the weight $\lambda$ is the following:
\[\lambda(K_s)=q^{-s-2}b^{-n}, \quad \lambda(K_{s+1})=q^{s+1}b^n, \quad \lambda(K_i) =1, \quad i\neq s, s+1,  \]
which is exactly of the form in (1). So the assertion (1) follows. 
\end{proof}
   
   All simple subquotients obtained in  Corollary  \ref{Highestweight} can be realized as $q$-oscillator representations by using the Fock space representations of $\scrB_{\nu}$ and the algebra homomorphisms in Proposition \ref{Prop:alghom} (cf. e.g. \cite{K18}). In the following subsection, we shall recall the $q$-oscillator representations.

\subsection{Realization of multiplicity-free weight modules}\label{Subsection:Fockspacerep} Let $F = \oplus_{m \in \bbZ_{\geq 0}}\bbC\cket{m}$ be the Fock space representation of $\scrB_\nu$ on which the generators $\bfazh$ and $\bfa$ act as the creation and annihilation operators respectively, and the element $\bfazh \bfa = \{\bfk \}_\nu$ corresponds to the number operator, more precisely, for any $m \in \bbZ_{\geq 0}$, 
\[\bfazh.\cket{m} = \cket{m+1}, \quad \bfa.\cket{m} = [m]_\nu \cket{m-1}, \quad \bfk^{\pm1}.\cket{m} = \nu^{\pm m} \cket{m}. \]
In particular, $\bfa.\cket{0}=0$. 

Denote this representation by $\rho: \scrB_{\nu} \rightarrow \End_{\bbC}(F)$. For any $b \in \bbC^\times$ and $\varepsilon \in \{0,1\}$, we denote by $\rho_{\varepsilon, b} $ the composition $ \rho\circ \vartheta^{\varepsilon}\circ \theta_{b,0}$ (cf. Lemma \ref{Lemma:automorphisms}). Then $F$ has a new $\scrB_\nu$-module structure via $\rho_{\varepsilon, b} $.

\begin{definition}\label{Def:q-oscillator}
	Let $z$ be a parameter valued in $\bbC^\times$. We define the representation  $\calF^z_{\bfepsilon,\bfb}$ of $\Uqg$ on the space $ F^{\otimes n} $ via the composition of the algebra homomorphisms $\pi_z:=\pi_{X_N^{(r)}, z}$ defined in Proposition $ \ref{Prop:alghom} $ and 
\[ \scrB_{\nu}^{\otimes n}[z, z\inv] \xrightarrow{\rho_{\varepsilon_1, b_1}\otimes\cdots\otimes\rho_{\varepsilon_n, b_n}} \End_{\bbC}(F^{\otimes n}) \]  
where $\bfepsilon = (\varepsilon_i)_i \in \{0, 1 \}^n$, and $\bfb = (b_i)_i \in (\bbC^\times)^n$.
\end{definition}

 For any $n$-tuple $(m_i)_i \in (\bbZ_{\geq 0})^n$, we use $\cket{\bfm}:=\cket{m_1}\otimes\cdots\otimes \cket{m_n}$ for the basis vector of $\calF^z_{\bfepsilon,\bfb}$. Let $\bfe_j$ be the $j$-th standard vector in $\bbZ^n$ with $1$ at the $j$-th term and $0$ otherwise for $1\leq j \leq n$. Moreover, set $\bf 0$ for $(0, \cdots, 0) \in \bbZ^n$.

For $n \geq 2$, note that $\Uqg$-module actions of $X^{\pm1}_i, K_i$ for $1\leq i \leq n-1$ on $\calF^z_{\bfepsilon,\bfb}$, by Definition \ref{Def:q-oscillator}, can be written down uniformly as follows: 
 \begin{align}
 	\label{Equation:X^+}&X_i^+.\cket{\bfm}  = (-1)^{\varepsilon_{i}}b_ib_{i+1}\inv [{m_i}/{m_i^{\varepsilon_i}}]_\nu [m_{i+1}^{\varepsilon_{i+1}}]_\nu \cket{\bfm-(-1)^{\varepsilon_i}\bfe_i+(-1)^{\varepsilon_{i+1}}\bfe_{i+1}}, \\
 	&X_i^-.\cket{\bfm} = (-1)^{\varepsilon_{i+1}}b_i\inv b_{i+1}[m_{i}^{\varepsilon_{i}}]_\nu [{m_{i+1}}/{m_{i+1}^{\varepsilon_{i+1}}}]_\nu  \cket{\bfm+(-1)^{\varepsilon_i}\bfe_i-(-1)^{\varepsilon_{i+1}}\bfe_{i+1}}, \\
 	\label{Equation:weightaction}&K_i.\cket{\bfm}  =\nu^{-(-1)^{\varepsilon_i}(m_i+\varepsilon_i)+(-1)^{\varepsilon_{i+1}}(m_{i+1}+\varepsilon_{i+1})}\cket{\bfm},
 \end{align}
for  $\bfm \in (\bbZ_{\geq 0})^n$, where $\nu$ is defined in Proposition \ref{Prop:alghom} for each type, and $\cket{\bfm}$ for $\bfm \notin (\bbZ_{\geq 0})^n$ can be read as $0$. Here we remark that the $\UqA$-module actions of $X^{\pm}_0, K_0$ on $\cket{\bfm}$ also have the above forms where we understand the indices $i, i+1$ as $n, 1$ (mod $n$) respectively. 

Regard $U_q(A_{n-1})$ as the subalgebra of $U_q(\frakg) (n \geq 2)$ via forgetting the actions of the Drinfeld-Jimbo generators indexed by $0$ and $n$.  One can check that  $\calF^z_{\bfepsilon,\bfb}$ as a $U_q(A_{n-1})$-module  is a multiplicity-free weight module. In fact, $\calF^z_{\bfepsilon,\bfb}$ has the following direct sum decomposition:
\[\calF^z_{\bfepsilon,\bfb} = \bigoplus_{l = -\infty}^\infty \calF_{\bfepsilon,\bfb}^{z, (l)}, \quad \calF_{\bfepsilon,\bfb}^{z, (l)}= \bigoplus_{|\bfm|_\bfepsilon = l}\bbC\cket{\bfm}. \]
For any $\bfm \in \bbZ^n$, denote $|\bfm|_\bfepsilon = \sum_i(-1)^{\varepsilon_i}m_i$. Each $ \calF_{\bfepsilon,\bfb}^{z, (l)} $ is an irreducible, multiplicity-free weight $U_q(A_{n-1})$-module by the formulae (\ref{Equation:X^+})-(\ref{Equation:weightaction}) as $q$ (or $\nu$) is not a root of unity. 

Fix $1 \leq i \leq n$. Define the algebra homomorphism $\delta_i: \calA_0 \rightarrow \bbC$ by $ \bfk_j \mapsto \nu^{\delta_{ij}} $ for $1 \leq j \leq n$. Then it induces an algebra character  $\tdelta_{i}\in \scrX$ by
\[\tdelta_{i} : U^0 \stackrel{\pi|_{U^0}}{\longrightarrow} \calA_0 \stackrel{\delta_i}{\longrightarrow} \bbC. \]
Then we have

\begin{proposition}
For $\bfepsilon \in \{0,1\}^n$, $\bfb \in (\bbC^\times)^n$, we have 
\begin{enumerate}[\rm (i)]
	 \item $\calF^z_{\bfepsilon,\bfb}$ is a weight module with  $\dim (\calF^z_{\bfepsilon,\bfb})_\lambda \leq 1$ for any $\lambda \in \scrX$. 
	 \item If $\dim (\calF^z_{\bfepsilon,\bfb})_\lambda = 1$, then there exists $\bfm \in (\bbZ_{\geq 0})^n$ such that $ (\calF^z_{\bfepsilon,\bfb})_\lambda = \bbC\cket{\bfm}$ with
	\begin{equation}\label{Equation:weight}
		\lambda=  \sum_{i=1}^n(-1)^{\varepsilon_i}(m_i+\varepsilon_i)\tdelta_i. 
	\end{equation}
\end{enumerate}	
\end{proposition}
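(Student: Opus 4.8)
The plan is to reduce both parts to understanding the single assignment $\bfm\mapsto\lambda_\bfm$ that sends a basis vector $\cket{\bfm}$ to its $U^0$-weight $\lambda_\bfm\in\scrX$, and to show that this map is well defined (every $\cket{\bfm}$ is a weight vector) and injective (distinct $\bfm$ give distinct weights). First I would check that each $\cket{\bfm}$ is a simultaneous eigenvector for all $K_i$, $i\in I$. For the generators $K_1,\dots,K_{n-1}$ this is exactly \eqref{Equation:weightaction}; for the two boundary generators $K_0,K_n$ it follows by reading off their images under $\pi_z$ in Proposition~\ref{Prop:alghom} and applying the Fock-space rule $\bfk^{\pm1}.\cket{m}=\nu^{\pm m}\cket{m}$. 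Since the prescribed basis $\{\cket{\bfm}\}$ then consists of weight vectors, $\calF^z_{\bfepsilon,\bfb}=\bigoplus_\bfm\bbC\cket{\bfm}$ is automatically a weight module, which is the first half of (i); the multiplicity bound in (i) becomes the injectivity of $\bfm\mapsto\lambda_\bfm$, and (ii) becomes the explicit evaluation of $\lambda_\bfm$.

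For the explicit weight I would first record, using $\rho_{\varepsilon_i,b_i}=\rho\circ\vartheta^{\varepsilon_i}\circ\theta_{b_i,0}$ together with Lemma~\ref{Lemma:automorphisms}, that the $i$-th oscillator generator acts diagonally by $\bfk_i.\cket{\bfm}=\nu^{(-1)^{\varepsilon_i}(m_i+\varepsilon_i)}\cket{\bfm}$. Writing $c_i=(-1)^{\varepsilon_i}(m_i+\varepsilon_i)$, the vector $\cket{\bfm}$ determines the character $\chi_\bfm\colon\calA_0\to\bbC$, $\bfk_i\mapsto\nu^{c_i}$, and since the $\Uqg$-action factors through $\pi_z$ we have $\lambda_\bfm=\chi_\bfm\circ(\pi_z|_{U^0})$ with $\chi_\bfm=\prod_i\delta_i^{\,c_i}$ in the character group of $\calA_0$. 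On the interior generators, where $\pi_z(K_i)=\bfk_i^{-1}\bfk_{i+1}$ is a grouplike monomial, composing with $\pi_z|_{U^0}$ and using that addition in $\scrX$ is dual to $\Delta(K_j)=K_j\otimes K_j$ gives at once $\lambda_\bfm(K_i)=\prod_j\tdelta_j(K_i)^{c_j}$, which is the value of \eqref{Equation:weight} on $K_i$. At the boundary generators $K_0,K_n$ the images carry scalar prefactors and the quadratic monomials $\bfk_1^{\pm2},\bfk_n^{\pm2}$ rather than grouplike monomials, so there I would verify \eqref{Equation:weight} by evaluating both sides of the equality directly; keeping track of these boundary contributions is the only delicate point in establishing (ii).

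The step I expect to be the main obstacle is the injectivity of $\bfm\mapsto\lambda_\bfm$ needed for the multiplicity bound in (i). Since $\bfm\mapsto(c_i)_i$ is a bijection onto its image, it suffices to reconstruct the integers $c_i$ from the eigenvalues $\lambda_\bfm(K_j)$, $j\in I$. The interior generators contribute only differences, $\lambda_\bfm(K_i)=\nu^{c_{i+1}-c_i}$ for $1\le i\le n-1$, which (as $\nu$ is not a root of unity, so $\nu^{c}=\nu^{c'}$ forces $c=c'$) determine $c_2,\dots,c_n$ in terms of $c_1$ but leave one free parameter. The rigidity must then be supplied according to type: in $C_n^{(1)},A_{2n}^{(2)},D_{n+1}^{(2)}$ the boundary images involve the squares $\bfk_1^{\pm2},\bfk_n^{\pm2}$, so $\lambda_\bfm(K_0)$ and $\lambda_\bfm(K_n)$ pin down $c_1$ and $c_n$ absolutely and the remaining ambiguity disappears, giving injectivity and hence multiplicity one. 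In $A_{n-1}^{(1)}$, where every node yields a ratio, the weight determines $\bfm$ only up to the shift recorded by the homogeneous degree $l=|\bfm|_\bfepsilon=\sum_i(-1)^{\varepsilon_i}m_i$ (equivalently $\sum_i c_i$), so the reconstruction closes only after fixing the component $\calF_{\bfepsilon,\bfb}^{z,(l)}$; isolating this extra invariant through the decomposition $\calF^z_{\bfepsilon,\bfb}=\bigoplus_l\calF_{\bfepsilon,\bfb}^{z,(l)}$ is the point requiring the most care, and carrying out the boundary analysis type by type is where the substantive work lies.
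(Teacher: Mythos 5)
Your overall strategy --- compute the $K_i$-eigenvalue of each $\cket{\bfm}$, read off the weight as the character $\chi_\bfm=\prod_i\delta_i^{c_i}$ of $\calA_0$ composed with $\pi_z|_{U^0}$, and reduce the multiplicity bound to injectivity of $\bfm\mapsto\lambda_\bfm$ --- is exactly the paper's (the printed proof simply records the $K_0,K_n$-eigenvalues for the non-$A$ types and asserts the conclusion), and your handling of the scalar prefactors at the boundary generators is more careful than the paper's. For the types $C_n^{(1)}$, $A_{2n}^{(2)}$, $D_{n+1}^{(2)}$ your argument closes: $K_0$ and $K_n$ act by fixed nonzero constants times $\nu^{(\kappa_1+1)c_1}$ and $\nu^{-(\kappa_2+1)c_n}$ respectively, so since $\nu$ is not a root of unity they pin down $c_1$ and $c_n$, the interior $K_i$ give the differences $c_{i+1}-c_i$, and $\bfm\mapsto\lambda_\bfm$ is injective.

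The type $A_{n-1}^{(1)}$ case is where your proposal does not actually deliver the statement. You correctly observe that there \emph{every} $K_i$, $i\in I$ (including $i=0$, which maps to $\bfk_n^{-1}\bfk_1$), acts by a ratio $\nu^{c_{i+1}-c_i}$, so the weight determines $(c_i)_i$ only up to a common integer shift; but ``fixing the component $\calF^{z,(l)}_{\bfepsilon,\bfb}$'' is not available in part (i), which asserts $\dim(\calF^z_{\bfepsilon,\bfb})_\lambda\le 1$ for the \emph{whole} module. Your own reconstruction argument in fact produces collisions: for $\bfepsilon=(0,\dots,0)$ the vectors $\cket{\bf 0}$ and $\cket{(1,\dots,1)}$ are both fixed by every $K_i$ and hence lie in the same weight space. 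So either (i) must be read component-wise in type $A$ (consistent with the discussion of the decomposition $\calF^z_{\bfepsilon,\bfb}=\oplus_l\calF^{z,(l)}_{\bfepsilon,\bfb}$ preceding the proposition), or it fails there; you should say which, rather than folding the issue into ``the point requiring the most care.'' The paper's proof is itself silent on type $A$ --- it only writes the $K_0,K_n$ actions for $X_N^{(r)}\neq A_{n-1}^{(1)}$ --- so your analysis is sharper than the printed one, but as written your plan neither proves (i) for type $A$ nor flags that it cannot be proved as literally stated.
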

\begin{proof}
	It is clear that $\calF^z_{\bfepsilon,\bfb}$ is a weight module. By (\ref{Equation:weightaction}) and the actions of $K_0$ and $K_n$, which are defined for $X_N^{(r)} \neq A_{n-1}^{(1)}$ as follows: 
	\begin{align}
		K_0.\cket{\bfm}&= -\imath^{1-\kappa_1}\nu^{(-1)^{\varepsilon_1}(\kappa_1+1)(m_1+1/2)}\cket{\bfm}, \\
		K_n.\cket{\bfm}&= \imath^{1+\kappa_2}\nu^{-(-1)^{\varepsilon_n}(\kappa_2+1)(m_n+1/2)}\cket{\bfm},
	\end{align}
	where $\kappa_1$ and $\kappa_2$ are defined in Subsection \ref{Subsection:U0free}, the relative weight of $\cket{\bfm}$ is given by the right hand side of the equality (\ref{Equation:weight}). By the above statement, $\dim(\calF^z_{\bfepsilon,\bfb})_{\lambda}\leq 1$ for any $\lambda \in \scrX$. 
\end{proof}

Consider the following decomposition of $\calF^z_{\bfepsilon,\bfb}$:
\[\calF^z_{\bfepsilon,\bfb} = \calF_{\bfepsilon,\bfb}^{z,+} \oplus \calF_{\bfepsilon,\bfb}^{z,-}, \quad \calF_{\bfepsilon,\bfb}^{z,+} =\bigoplus_{|\bfm|_\bfepsilon \equiv 0 (\Mod 2)}\bbC\cket{\bfm}, \quad  \calF_{\bfepsilon,\bfb}^{z,-}=\bigoplus_{|\bfm|_\bfepsilon \equiv 1 (\Mod 2)}\bbC\cket{\bfm}. \]
For $0 \leq s \leq  n$, let $\bfepsilon_{>s}\in \{0,1\}^n$ satisfy
\[\varepsilon_1=\cdots=\varepsilon_{s}=0, \quad \varepsilon_{s+1} = \cdots =\varepsilon_n=1. \]
For example, $\bfepsilon_{>0}= (1, \cdots, 1)$ and $\bfepsilon_{>n}= (0, \cdots, 0)$. 

Then we have 

\begin{proposition}\label{Prop:Highestweights}
  For any $\bfepsilon \in \{0,1\}^n$, $\bfb \in (\bbC^\times)^n$, we have 
  \begin{enumerate}[\rm (i)]
  	\item As a $U_q(A_{n-1}^{(1)})$-module, $\calF_{\bfepsilon,\bfb}^{z,(l)}$ is irreducible for any admissible $l \in \bbZ$ (defined in (\ref{Equ:v_{ls}})); it is a highest $\ell$-weight module with a highest $\ell$-weight vector $v_{l,s}$  if and only if $\bfepsilon = \bfepsilon_{>s}$ for some $0\leq s \leq n$, where
  	  \begin{equation}\label{Equ:v_{ls}}
  	 	v_{l,s}=\left\{
  	 	\begin{array}{l}
  	 		\cket{l\bfe_s}, \quad l \geq 0 \text{ and } 0 < s \leq n, \\
  	 		\cket{-l\bfe_{s+1}}, \quad l <0 \text{ and } 0 \leq s <n.
  	 	\end{array}
  	 	\right.
  	 \end{equation} 
  	\item As $U_q(C_{n}^{(1)})$-modules, $\calF_{\bfepsilon,\bfb}^{z,+}$ and $ \calF_{\bfepsilon,\bfb}^{z,-} $ are irreducible; they are highest $\ell$-weight modules with highest $\ell$-weight vector $v^+ = \cket{\bf 0}$ and $v^- = \cket{\bfe_n}$ respectively whenever $\bfepsilon = \bfepsilon_{>n}$.  
  	\item As a $U_q(A_{2n}^{(2)})$ or $U_q(D_{n+1}^{(2)})$-module,  $\calF^z_{\bfepsilon,\bfb}$ is irreducible; it is a highest $\ell$-weight module with a highest $\ell$-weight vector $v = \cket{\bf 0}$ whenever $\bfepsilon = \bfepsilon_{>n}$.  
  \end{enumerate}
\end{proposition}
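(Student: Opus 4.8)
The plan is to treat the two assertions of the proposition—irreducibility and the highest $\ell$-weight property—separately, reducing each to the explicit action of the Drinfeld--Jimbo generators on the Fock basis $\{\cket{\bfm}\}$ recorded in $(\ref{Equation:X^+})$--$(\ref{Equation:weightaction})$ and in Proposition $\ref{Prop:alghom}$, supplemented by a single argument carried out in the Drinfeld presentation. Throughout I would use that, since the module is multiplicity-free, any nonzero submodule is $U^0$-stable and hence a sum of the (at most one-dimensional) weight spaces, so it contains a basis vector $\cket{\bfm}$.

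First I would establish irreducibility by identifying the conserved quantity of each sector and proving connectivity of its basis. Reading off $(\ref{Equation:X^+})$--$(\ref{Equation:weightaction})$ together with the end-node formulas: in type $A_{n-1}^{(1)}$ every generator preserves $|\bfm|_\bfepsilon$, so the grading by $l$ is the finest invariant; in type $C_{n}^{(1)}$ the operators attached to the nodes $0,n$ shift one occupation number by $\pm 2$ while the middle generators preserve $|\bfm|_\bfepsilon$, so only the parity of $|\bfm|_\bfepsilon$ survives, matching the splitting $\calF^z_{\bfepsilon,\bfb}=\calF^{z,+}_{\bfepsilon,\bfb}\oplus\calF^{z,-}_{\bfepsilon,\bfb}$; in types $A_{2n}^{(2)}$ and $D_{n+1}^{(2)}$ the node-$n$ operator shifts $m_n$ by $\pm 1$ and destroys this parity, so no invariant remains. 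Irreducibility of a sector then reduces to the statement that its basis is connected under the raising and lowering operators, which holds because all structure coefficients $[m]_\nu$ occurring in $(\ref{Equation:X^+})$ are nonzero for $\nu$ not a root of unity; concretely one moves occupation between adjacent sites and in or out through the end nodes to reach every $\cket{\bfm}$ in the sector from any given one.

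Next I would locate the highest $\ell$-weight vectors. Such a vector spans a one-dimensional weight space, hence is a single basis vector $\cket{\bfm}$ annihilated by $x_{i,0}^+=X_i^+$ for every $i\in I_0$. The key observation is that $X_i^+$ acts as a product of two operators on the sites $i,i+1$, each a creation or an annihilation according to $\varepsilon_i,\varepsilon_{i+1}$; the product can annihilate a basis vector only if at least one factor is an annihilation, and it annihilates no basis vector at all precisely when both factors are creations, i.e. when $\varepsilon_i=1,\varepsilon_{i+1}=0$. Forbidding this ``descent'' for all $i\in I_0$ forces $\bfepsilon$ to be the staircase $\bfepsilon_{>s}$ and then singles out, in each admissible sector, the unique annihilated basis vector, which is exactly $v_{l,s}$ in $(\ref{Equ:v_{ls}})$ (respectively $\cket{\bf 0}$ and $\cket{\bfe_n}$ in the even and odd components for $C_{n}^{(1)}$, and $\cket{\bf 0}$ for the twisted types). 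This simultaneously yields existence and the ``only if'' of part (i), since for any non-staircase $\bfepsilon$ there is an $i$ with $\varepsilon_i=1,\varepsilon_{i+1}=0$, and then $X_i^+$ annihilates no basis vector.

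It remains to upgrade $X_i^+.v=x_{i,0}^+.v=0$ to the full condition $x_{i,k}^+.v=0$ for all $k$, and to verify that $v$ is an $\ell$-weight vector. The latter is immediate: by $(\ref{DrinfeldRelations})$ each $\psi_{i,k}^\pm$ and each $h_{i,k}$ commutes with the $k_j=K_j$, hence preserves the one-dimensional space $\bbC v=V_{\wt(v)}$, so $v$ is automatically a joint eigenvector, and in particular $h_{i,k}.v\in\bbC v$. For the former I would use the relation $[h_{i,k},x_{j,0}^+]=\tfrac1k\big(\sum_{s=1}^r[\tfrac{k(\baralpha_i,\baralpha_{\sigma^s(j)})}{d_{\bari}}]_{\bari}\,\omega^{ks}\big)x_{j,k}^+$ (with $C$ acting trivially): taking $i=j$ writes $x_{j,\td_j k}^+$ as a scalar multiple of $[h_{j,\td_j k},x_{j,0}^+]$, whence $x_{j,\td_j k}^+.v=(\text{scalar})\big(h_{j,\td_j k}\,x_{j,0}^+-x_{j,0}^+\,h_{j,\td_j k}\big).v=0$ because $x_{j,0}^+.v=0$ and $h_{j,\td_j k}.v\in\bbC v$; the remaining indices are covered by $x_{j,k}^+=0$ whenever $\td_j\nmid k$. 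Cyclicity $\Uqg.v=V$ then follows from the irreducibility already proved, so $v$ is a highest $\ell$-weight vector and $V$ a highest $\ell$-weight module. The step I expect to be the main obstacle is the nonvanishing of the structure constant $\tfrac1k\sum_{s=1}^r[\tfrac{k(\baralpha_j,\baralpha_{\sigma^s(j)})}{d_{\barj}}]_{\barj}\,\omega^{ks}$ for the twisted types $A_{2n}^{(2)}$ and $D_{n+1}^{(2)}$, where the sum over the $\sigma$-orbit could a priori cancel; this must be checked orbit-by-orbit, the untwisted case reducing to $[2k]_j\neq0$.
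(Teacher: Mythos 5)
Your proposal is correct and its overall architecture coincides with the paper's: explicit Fock-space actions, irreducibility via connectivity of the basis under the raising/lowering operators (using that weight spaces in each sector are one-dimensional), and the ``no descent $\varepsilon_i=1,\varepsilon_{i+1}=0$'' analysis to pin down exactly which $\bfepsilon$ admit a basis vector killed by all $X_i^+$, $i\in I_0$, yielding $v_{l,s}$, $v^\pm$, $v$. The one place you genuinely diverge is the upgrade from $x_{i,0}^+.v=0$ to $x_{i,k}^+.v=0$ for all $k$: you extract $x_{i,\td_ik}^+$ from $[h_{i,\td_ik},x_{i,0}^+]$ and must then verify the nonvanishing of the twisted structure constants $\tfrac1k\sum_{s}[\cdot]\,\omega^{ks}$, which you rightly flag as the delicate point (it does hold for $A_{2n}^{(2)}$ and $D_{n+1}^{(2)}$ when $\td_i\mid k$, reducing to expressions like $[4k]_n\mp[2k]_n\neq 0$ for $q$ not a root of unity). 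The paper instead disposes of this ``by weight consideration'': every $x_{i,k}^+.v$ lies in the weight space of weight $\wt(v)+\alpha_i$ of the sector, and since the weight determines the occupation vector $\bfm$ within a sector and the unique candidate $\bfm'$ has a negative entry, that weight space is zero. The paper's route buys you freedom from any computation with the Drinfeld loop relations; your route is more robust (it would survive in situations where the shifted weight does occur in the module) at the cost of the orbit-by-orbit check you identified. Either way the proof closes, so there is no gap, only an avoidable verification.
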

\begin{proof}
	 Note that $K_\delta$ acts trivially on $\calF^z_{\bfepsilon,\bfb}$  by (\ref{Equa:centeraction}). The defining relations (\ref{DrinfeldRelations}) imply that the actions of $h_{i,k}$, $1\leq i \leq N$, $k\in \bbZ\setminus\{0\}$ on $\calF^z_{\bfepsilon,\bfb}$ commute pairwise. Hence $\calF^z_{\bfepsilon,\bfb}$ is an $\ell$-weight $\Uqg$-module. It is clear that $\calF_{\bfepsilon,\bfb}^{z,(l)}$ is closed under the action of $\UqA$. The irreducibility of $\calF_{\bfepsilon,\bfb}^{z,(l)}$ can be checked by the actions (\ref{Equation:X^+})-(\ref{Equation:weightaction}). Note that for $\bfepsilon = \bfepsilon_{>0}$ (resp. $\bfepsilon = \bfepsilon_{>n}$), $\calF_{\bfepsilon,\bfb}^{z,(l)}$ is finite dimensional for $l < 0$ (resp. $l \geq 0$). As a $U_q(A_{n-1})$-module (ignore the actions of $X_0^\pm, K_0$) $\calF_{\bfepsilon,\bfb}^{z,(l)}$ is a highest weight module iff $\varepsilon_1 \leq \cdots \leq \varepsilon_n$, and the corresponding highest weight vector can be chosen as (\ref{Equ:v_{ls}}), which is also a highest $\ell$-weight vector by weight consideration. 
	
	For $X_N^{(r)} \neq A_{n-1}^{(1)}$, the actions of $X_0^+$ and $X_n^+$ are given by 
	\begin{equation*}
		\hspace{-.7cm} X_0^+.\cket{\bfm} =z \frac{b_1^{-\kappa_1-1}}{[\kappa_1+1]_\nu}\prod_{j=0}^{\kappa_1}[(m_1-j)^{\varepsilon_1}]_\nu \cket{\bfm + (-1)^{\varepsilon_1}(\kappa_1+1)\bfe_1},
	\end{equation*}
and $ X_n^+.\cket{\bfm} =  x\cket{\bfm - (-1)^{\varepsilon_n}(\kappa_2+1)\bfe_n} $, where $a\in \bbC^\times$ is defined as follows: 
   	\begin{align*}
   	 x=(-1)^{\varepsilon_n(1-\kappa_2)} b_n^{\kappa_2+1} \frac{\imath^{1-\kappa_2}}{[\kappa_2+1]_\nu}\tau_{\nu,\kappa_2}\prod_{j=0}^{\kappa_2}[(m_n-j)/(m_n-j)^{\varepsilon_n}]_\nu
   \end{align*}
and $\tau_{\nu,\kappa_2}=(\nu-\kappa_2+1)/(\nu+\kappa_2-1) $. Similarly, we can obtain the actions of $X_0^-$ and $X_n^-$. Therefore, the assertions (ii) and (iii) can be deduced directly from the above actions. 
\end{proof}

\section{Highest $\ell$-weights}\label{Sec: highestlweights}
In this section, we focus on the irreducible highest $\ell$-weight representations constructed in the previous section, and compute their highest $\ell$-weights explicitly. 

\subsection{Multiplicity-free highest $\ell$-weight modules}\label{Subsection:q-oscillator representations} 

Fix $0 < s < n$. Let $\calW_s := F^{\otimes n }$ be the $U_q(A_{n-1}^{(1)})$-module  defined as follows (see also \cite{KL2022}): 
\begin{alignat*}{4}
	&   X_0^+.\cket{\bfm}  =   \cket{\bfm+\bfe_1+\bfe_n}, \quad &&
		X_0^-.\cket{\bfm} = -[m_1][m_n] \cket{\bfm-\bfe_1-\bfe_n},  \\
	&	X_s^+.\cket{\bfm}  =- [m_s][m_{s+1}] \cket{\bfm-\bfe_s-\bfe_{s+1}}, \quad &&
		X_s^-.\cket{\bfm} =   \cket{\bfm+\bfe_s+\bfe_{s+1}},\\
	& X_i^+.\cket{\bfm} =[m_i]\cket{\bfm -\bfe_i+ \bfe_{i+1}},\quad && X_i^-.\cket{\bfm} = [m_{i+1}]\cket{\bfm + \bfe_i-\bfe_{i+1}},  \\
	& X_j^+.\cket{\bfm} = [m_{j+1}]\cket{\bfm +\bfe_j-\bfe_{j+1}}, \quad && X_j^-.\cket{\bfm} =[m_j]\cket{\bfm -\bfe_j+\bfe_{j+1}}, 
\end{alignat*}
and 
\begin{alignat*}{3}
&K_0.\cket{\bfm} = q^{m_1+m_n+1}\cket{\bfm}, \quad && K_s.\cket{\bfm}  =q^{-m_s-m_{s+1}-1}\cket{\bfm},\\  
& K_i.\cket{\bfm} = q^{m_{i+1}-m_i}\cket{\bfm}, \quad && K_j.\cket{\bfm} = q^{m_j-m_{j+1}}\cket{\bfm}, 
\end{alignat*}
where $ 1\leq  i < s < j \leq n-1$ and   $\bfm \in (\bbZ_{\geq 0})^n$. From the actions (\ref{Equation:X^+})-(\ref{Equation:weightaction}), $\calW_s$ is just the twisting of the module $\calF^1_{\bfepsilon, \bfb}$ where $\bfepsilon = \bfepsilon_{>s}$, $\bfb = (1,\cdots, 1)$ by the automorphism of $U_q(A_{n-1}^{(1)})$ sending $X_k^\pm $ to $-X_k^\pm$, $s\leq k\leq n$ along with other Drinfeld-Jimbo generators fixed. Denote $ \calW_s^{(l)} $ as the $l$-th irreducible component of $\calW_s$, i.e.,  $\calW_s^{(l)} = \oplus_{|\bfm|_\bfepsilon = l}\bbC \cket{\bfm}$.

  Let  $(X_N^{(r)}, \nu)$ be one of the types in Proposition \ref{Prop:alghom} except $(A_{n-1}^{(1)}, q)$.  Let $\calW= F^{\otimes n }$ be the $U_q(\AffX)$-module defined as (see  \cite{KO15}):
\begin{align*}
	&   X_0^+.\cket{\bfm}  = \frac{1}{[\kappa_1+1]_\nu}  \cket{\bfm+(\kappa_1+1)\bfe_1}, \\
	&	X_0^-.\cket{\bfm} = -(- 1)^{|\kappa|} \frac{\imath^{1-\kappa_1}}{[\kappa_1+1]_\nu} \tau_{\nu,\kappa_1} \prod_{j=0}^{\kappa_1}[m_1-j]_\nu \cket{\bfm-(\kappa_1+1)\bfe_1}, \\
	&	K_0.\cket{\bfm} = (-1)^{|\kappa|}\imath^{1-\kappa_1}\nu^{(\kappa_1+1)(m_1+1/2)}\cket{\bfm},\\
	&	X_i^+.\cket{\bfm}  =  [{m_i}]_\nu  \cket{\bfm-\bfe_i+\bfe_{i+1}}, \\
	&	X_i^-.\cket{\bfm} =  [{m_{i+1}}]_\nu  \cket{\bfm+\bfe_i-\bfe_{i+1}}, \quad \quad (1\leq i \leq n-1)\\
	&	K_i.\cket{\bfm}  =\nu^{-m_i+m_{i+1}}\cket{\bfm},\\
	& X_n^+.\cket{\bfm} =  \frac{ \imath^{1+\kappa_2}}{[\kappa_2+1]_\nu}\tau_{\nu,\kappa_2}\prod_{j=0}^{\kappa_2}[m_n-j]_\nu\cket{\bfm - (\kappa_2+1)\bfe_n},\\
	& X_n^-.\cket{\bfm} = \frac{1}{[\kappa_2+1]_\nu}\cket{\bfm + (\kappa_2+1)\bfe_n},\\
	&K_n.\cket{\bfm} = \imath^{1-\kappa_2}\nu^{-(\kappa_2+1)(m_n+1/2)}\cket{\bfm},
\end{align*}
where  $\bfm \in (\bbZ_{\geq 0})^n$, $|\kappa| = \kappa_1+\kappa_2$ and $\tau_{\nu,\kappa_i}= (\nu-\kappa_i+1)/(\nu+\kappa_i-1)$. Here $\kappa_i$'s are defined in Subsection \ref{Subsection:U0free}. This module can be obtained from $\calF^1_{\bfepsilon, \bfb}$ with $\bfepsilon = \bfepsilon_{>n}$ and $\bfb = (1,\cdots, 1)$ by the automorphism of $U_q(\AffX)$ defined as
\[ X_0^- \mapsto (-1)^{|\kappa|+1}X_0^-, \quad K_0 \mapsto (-1)^{|\kappa|+1}K_0, \quad X_n^+ \mapsto (-1)^{\kappa_2}X_n^+, \quad K_n \mapsto (-1)^{\kappa_2}K_n \]
and other generators are fixed. For the type $\AffC$,  denote the irreducible components $\calF_{\bfepsilon, \bfb}^{1,\pm}$ of $U_q(\AffC)$-module $\calW$ by $\calW^\pm$, for convenience.

\begin{lemma}\label{Lemma:l-weight}
	Let $L(\bsf)$ be an irreducible highest $\ell$-weight $\Uqg$-module with $\bsf=(f_i(z))_{i\in I_0}\in \calR$. If $\dim L(\bsf)_{\wt(\bsf)-\alpha_i}=1$ for some $i \in I_0$ then $f_i(z)$ satisfies that
	\begin{equation}\label{Equforms}
	f_i(z) = f_{i,0}^+ \frac{1-(a-b)z}{1-az}
	\end{equation}
	where $a, b \in \bbC$ satisfy that $f_{i, 2\td_i}^+ = af_{i,\td_i}^+$ and $ f_{i, \td_i }^+= bf_{i, 0 }^+ $. 
\end{lemma}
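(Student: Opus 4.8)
The plan is to work entirely inside the two adjacent weight spaces $L(\bsf)_{\wt(\bsf)}=\bbC v$ and $L(\bsf)_{\wt(\bsf)-\alpha_i}$, and to convert the one-dimensionality hypothesis into a rigidity (Hankel rank-one) condition on the coefficients $f^\pm_{i,\td_i j}$ via the Drinfeld commutator (\ref{DrinfeldRelations}). First I would identify the lower weight space. From the triangular decomposition (\ref{Triang}) one has $L(\bsf)=U(\leqslant).v$, and since the simple roots are linearly independent, the only monomials in the generators $x^-_{j,\td_j k}$ carrying weight $-\alpha_i$ are the single generators $x^-_{i,\td_i m}$; hence $L(\bsf)_{\wt(\bsf)-\alpha_i}=\mathrm{span}\{x^-_{i,\td_i m}.v : m\in\bbZ\}$. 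By hypothesis this space is one-dimensional, so I fix a spanning vector $u$ and write $x^-_{i,\td_i m}.v=c_m u$. Since the top space is one-dimensional by the definition of a highest $\ell$-weight module, and $x^+_{i,\td_i k}$ raises weight by $\alpha_i$, I may also write $x^+_{i,\td_i k}.u=e_k v$.

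Next I would compute $x^+_{i,\td_i k}.(x^-_{i,\td_i m}.v)$ in two ways. Pushing $x^+_{i,\td_i k}$ past $x^-_{i,\td_i m}$ with the diagonal case of (\ref{DrinfeldRelations}), and using $C$ trivial, $x^+_{i,\cdot}.v=0$, and $\psi^\pm_{i,\cdot}.v$ scalar (recall $\psi^+$ has only nonnegative indices and $\psi^-$ only nonpositive ones), this equals a fixed nonzero multiple of $G(k+m)\,v$, where $G(j):=f^+_{i,\td_i j}-f^-_{i,\td_i j}$ with the convention that $f^+_{i,\td_i j}=0$ for $j<0$ and $f^-_{i,\td_i j}=0$ for $j>0$. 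On the other hand, proportionality gives $x^+_{i,\td_i k}.(x^-_{i,\td_i m}.v)=c_m\,x^+_{i,\td_i k}.u=c_m e_k\,v$. Equating and absorbing the constant, I obtain $\widetilde G(k+m)=c_m e_k$ for all $k,m\in\bbZ$, where $\widetilde G$ is a nonzero scalar multiple of $G$. This is precisely the statement that the bi-infinite Hankel array $[\widetilde G(k+m)]_{k,m}$ has rank at most one, equivalently $\widetilde G(a)\widetilde G(b)=\widetilde G(c)\widetilde G(d)$ whenever $a+b=c+d$.

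Then I would solve this functional equation, splitting into two cases. If $\widetilde G\equiv 0$, then $f^+_{i,\td_i j}=0$ for $j\ge 1$ and $f^+_{i,0}=f^-_{i,0}$, so $f_i(z)=f^+_{i,0}$ is constant and (\ref{Equforms}) holds with $b=0$. Otherwise $\widetilde G\not\equiv 0$, and the Hankel condition applied to $a=b=j$, $c=0$, $d=2j$ gives $\widetilde G(j)^2=\widetilde G(0)\widetilde G(2j)$, so $\widetilde G(0)=0$ would force $\widetilde G\equiv 0$; hence $\widetilde G(0)\neq 0$. Setting $g(x):=\widetilde G(x)/\widetilde G(0)$ and taking $c=d=0$ in the relation shows $g(x)g(y)=g(x+y)$ with $g(0)=1$, so $g(x)=\rho^{x}$ for a single $\rho\in\bbC^\times$ and $\widetilde G(j)=\widetilde G(0)\rho^{j}$. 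In particular the positive-index coefficients are fixed by a single geometric ratio: $f^+_{i,\td_i(j+1)}=\rho\,f^+_{i,\td_i j}$ for $j\ge 1$, and $f^+_{i,\td_i}=(f^+_{i,0}-f^-_{i,0})\rho$.

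Finally I would reassemble the generating function. Summing the geometric series, the Laurent expansion of $f_i(z)$ at $0$ is
\[
f_i(z)=f^+_{i,0}+(f^+_{i,0}-f^-_{i,0})\frac{\rho z}{1-\rho z}
=f^+_{i,0}\,\frac{1-(f^-_{i,0}/f^+_{i,0})\rho z}{1-\rho z},
\]
which is exactly (\ref{Equforms}) with $a=\rho$ and $a-b=(f^-_{i,0}/f^+_{i,0})\rho$; using $f^+_{i,0}f^-_{i,0}=1$ (equivalently $f_i(0)f_i(\infty)=1$) this is consistent at $\infty$. Reading off the first coefficients then gives $f^+_{i,\td_i}=bf^+_{i,0}$ and $f^+_{i,2\td_i}=af^+_{i,\td_i}$, as required. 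I expect the main obstacle to be the twisted-case bookkeeping in the second paragraph: one must check that, after restricting to the generators $x^\pm_{i,\td_i k}$, the root-of-unity prefactor $\tfrac{1}{\td_i}\sum_{s}\delta_{\sigma^s(i)i}\omega^{s\td_i m}$ appearing in (\ref{DrinfeldRelations}) is a nonzero constant independent of $m$ — this is exactly what makes the commutator depend on $(k,m)$ only through $k+m$, so that the Hankel argument applies verbatim.
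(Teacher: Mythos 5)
Your proof is correct and follows essentially the same route as the paper's: both exploit the one-dimensionality of $L(\bsf)_{\wt(\bsf)-\alpha_i}$ together with the diagonal case of the Drinfeld commutator (\ref{DrinfeldRelations}) to show that $f^+_{i,\td_i k}-f^-_{i,\td_i k}$ is geometric in $k$, and then sum the resulting series. The only difference is organizational: the paper extracts a single two-term recursion from the proportionality $x^-_{i,\td_i(j+1)}.v=a\,x^-_{i,\td_i j}.v$, whereas you package the same information as a rank-one Hankel factorization and solve the multiplicative functional equation; your closing remark about the $\omega$-prefactor being constant is indeed the point that makes the twisted cases go through.
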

\begin{proof}
	Suppose that  $v\in L(\bsf)$ is a nonzero $\ell$-weight vector of  $\bsf$. Note that $\{x_{i,k}^-.v, k \in \bbZ \}$ spans the weight space $L(\bsf)_{\wt(\bsf)-\alpha_i}$. If $ \dim (L(\bsf)_{\wt(\bsf)-\alpha_i})=1 $, then  there exist $j\in \bbZ$ such that $ x_{i, \tilde{d}_ij}^-.v $ is nonzero, and $a\in \bbC$ such that 
	\begin{equation}\label{Equ:action}
		x_{i, \tilde{d}_i(j+1)}^-.v=ax_{i, \tilde{d}_ij}^-.v. 
	\end{equation}
	Consider the actions of $x_{i, \tilde{d}_ik}^+$, $k\in \bbZ$ on (\ref{Equ:action}). The defining relations (\ref{DrinfeldRelations}) imply that 
	\begin{equation*}
		f_{i, \tilde{d}_i(k+j+1)}^+-f_{i, \tilde{d}_i(k+j+1)}^-=a(f_{i, \tilde{d}_i(k+j)}^+-f_{i, \tilde{d}_i(k+j)}^-)
	\end{equation*}
	for any $k\in \bbZ$. Since $f_{i,k}^-=f_{i,-k}^+ =0$ for $k>0$, we have $f_{i, \td_i(k+1)}^+ = af_{i,\td_ik}^+$ for any $k>0$. Take the series $f_i(z) = \sum_{k=0}^{\infty}z^kf_{i, \td_i k}^+$, we have 
	\begin{align*}
		f_i(z)(1-az) &= \sum_{k=0}^{\infty}z^kf_{i, \td_i k}^+ - a \sum_{k=0}^{\infty}z^{k+1}f_{i, \td_i k}^+ \\
		&= f_{i,0}^+ + \sum_{k=1}^{\infty}z^k(f_{i, \td_i k}^+- af_{i, \td_i(k-1)}^+) \\
		&= f_{i,0}^+ +(f_{i, \td_i }^+-af_{i, 0 }^+)z.
	\end{align*} 
	Hence $ f_i(z)  $ has the rational form (\ref{Equforms}).
\end{proof}

\subsection{Highest $\ell$-weights} Let us first study some properties of the Weyl group and the description of the root vectors of quantum affine algebras, which will enable us to compute the highest $\ell$-weight explicitly. 

\begin{lemma}\label{Lemma:Weylgroupelements}
	Let $i,j \in I$, and $i \neq j$.
	\begin{enumerate}[\rm (1)]
		\item If $a_{ij}a_{ji}=1$, then $s_js_i\alpha_j = \alpha_i$. 
		\item If $a_{ij}a_{ji}=2$, then $s_is_js_i\alpha_j =\alpha_j$. 
	\end{enumerate}
\end{lemma}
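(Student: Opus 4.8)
The plan is to prove both identities by a direct computation from the action of the simple reflections on the simple roots, using only the defining formula $s_i(\lambda)=\lambda-\inprod{\lambda}{\alpha_i}\alpha_i$ together with the normalization $\inprod{\alpha_j}{\alpha_i}=a_{ij}$. First I would record the two facts I will use repeatedly: on simple roots $s_i(\alpha_j)=\alpha_j-a_{ij}\alpha_i$, and $s_i(\alpha_i)=-\alpha_i$, the latter because $a_{ii}=2$. Everything then reduces to bookkeeping of the coefficients.

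For part (1) I would first observe that, since $A$ is a generalized Cartan matrix, its off-diagonal entries are non-positive integers; hence the hypothesis $a_{ij}a_{ji}=1$ forces $a_{ij}=a_{ji}=-1$. The claim then follows by applying $s_i$ and $s_j$ in turn: one gets $s_i(\alpha_j)=\alpha_j+\alpha_i$, and then, using $s_j(\alpha_i)=\alpha_i-a_{ji}\alpha_j=\alpha_i+\alpha_j$,
\[ s_j s_i(\alpha_j)=s_j(\alpha_j+\alpha_i)=-\alpha_j+(\alpha_i+\alpha_j)=\alpha_i. \]

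For part (2) the hypothesis is $a_{ij}a_{ji}=2$, and I would carry out the length-three computation step by step. Writing $a=a_{ij}$, I compute $s_i(\alpha_j)=\alpha_j-a\alpha_i$, and then apply $s_j$, obtaining
\[ s_j s_i(\alpha_j)=-\alpha_j-a(\alpha_i-a_{ji}\alpha_j)=\alpha_j-a\alpha_i, \]
where the crucial cancellation $-\alpha_j+a\,a_{ji}\alpha_j=\alpha_j$ uses precisely $a\,a_{ji}=a_{ij}a_{ji}=2$. Finally, applying $s_i$ and invoking $s_i(\alpha_i)=-\alpha_i$ gives $s_i(\alpha_j-a\alpha_i)=(\alpha_j-a\alpha_i)+a\alpha_i=\alpha_j$, as claimed.

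There is essentially no genuine obstacle here: both statements are instances of the standard behaviour of reflections in a rank-two root subsystem (type $A_2$ in case (1), type $B_2$ in case (2)). The only points requiring a moment of care are the integrality argument pinning down $a_{ij}=a_{ji}=-1$ in part (1), and keeping track of exactly where the product relation $a_{ij}a_{ji}=2$ is invoked in part (2). Alternatively, one could argue conceptually via $w s_\beta w\inv=s_{w\beta}$, so that $s_is_js_i=s_{s_i(\alpha_j)}$ is the reflection in the root $s_i(\alpha_j)$; but the direct computation above is shorter and self-contained.
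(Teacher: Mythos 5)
Your proof is correct and follows essentially the same route as the paper: a direct computation with $s_i(\alpha_j)=\alpha_j-a_{ij}\alpha_i$, where the paper records the general identities $s_js_i\alpha_j=(a_{ij}a_{ji}-1)\alpha_j-a_{ij}\alpha_i$ and $s_is_js_i\alpha_j=(a_{ij}a_{ji}-1)\alpha_j+(2-a_{ij}a_{ji})a_{ij}\alpha_i$ before specializing, while you substitute the hypotheses as you go. The calculations agree, so there is nothing to add.
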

\begin{proof}
		Both (1) and (2) are easy facts deduced from $ s_js_i\alpha_j = (a_{ij}a_{ji}-1)\alpha_j-a_{ij}\alpha_i  $ and 
	\begin{equation*}
	 s_is_js_i\alpha_j  = (a_{ij}a_{ji}-1)\alpha_j+(2-a_{ij}a_{ji})a_{ij}\alpha_i
	\end{equation*}
respectively. 
\end{proof}

Recall the braid group operators associated to $\tW$ introduced by Lusztig \cite{Lus90}. For each simple reflection $s_i$,  there is an algebra automorphism $T_i = T_{s_i}$ of $\Uqg$ defined by
\begin{alignat*}{4}
	&T_{i}X^+_{i} = -X^-_{i}K_{i}, \quad T_{i}X^-_{i}= - K_{i}\inv X^+_{i}, \quad T_{i}K_\beta=K_{s_i\beta},\\
	&T_{i}X^+_{j}=
	\sum_{k=0}^{-a_{ij}} (-1)^{k-a_{ij}}
	q_{i}^{-k} (X^+_{i})^{(-a_{ij}-k)}X^+_{j}(X^+_{i})^{(k)},  (i \neq j), \\
	& T_{i}X^-_{j}=
	\sum_{k=0}^{-a_{ij}} (-1)^{k-a_{ij}}
	q_{i}^{k} (X^-_{i})^{(k)}X^-_{j}(X^-_{i})^{(-a_{ij}-k)},  (i \neq j),
\end{alignat*}
where  $\beta\in Q$ and $ (X_i^{\pm})^{(k)} = (X_i^{\pm})^k/[k]^!_{i} $. 
Then $\Phi T_i = T_i\inv \Phi$, where $\Phi$ is the $\bbC$-linear anti-automorphism of $\Uqg$ sending $X_i^\pm$ to $X_i^\pm$, $K_i$ to $K_i\inv$ for $i\in I$. For any $\tau \in \scrT$, define $T_\tau$ by $T_\tau(X_i^\pm) = X_{\tau(i)}^\pm$ and $T_\tau(K_i) = K_{\tau(i)}$.

For later use, we list some well-known properties of braid group operators (cf. \cite{Lus93,Beck94}).  Choose one element $w \in \tW$.  If $\tau s_{i_1}s_{i_2}\cdots s_{i_m}$ is a reduced expression of $w$, then the automorphism $T_w = T_\tau T_{i_1}T_{i_2}\cdots T_{i_m}$ of $\Uqg$ is independent on the choice of the reduced expression of $w$. In particular, one reduced expression can be transformed to another by a finite sequence of braid relations. If $w(\alpha_i) = \alpha_j$ then $T_w(X_i^+) = \Xzh{j}$. Moreover, if $w = s_{i_1}s_{i_2}\cdots s_{i_m}$ is a reduced expression  and $l(ws_i)= l(w)+1$,  then we have  $T_{w}\Xzh{i}\in U^+$.

\begin{remark}\label{Remark:Reducedexpression}
	For $i \leq j$, put $ \bfs_{(i,j)} = s_is_{i+1}\cdots s_j. $ 

\noindent	$(\rm i)$	In the type $A_{n-1}^{(1)}$, a reduced expression of $\tomega_i$, $1 \leq i \leq n-1$ can be chosen as: 
		\[\tomega_i = \tau^i \bfs_{(1,n-i)}^{-1}\bfs_{(2,n-i+1)}^{-1}\cdots \bfs_{(i,n-1)}^{-1}  \]
		where $\tau$ is the diagram automorphism of $A_{n-1}^{(1)}$ sending $j$ to $j+1 (\mathrm{mod}\; n)$ for $j \in I$ (cf. \cite[Subsection 3.3]{JKP21}). 
		
			\noindent	$(\rm ii)$		In the type  $A_{2n}^{(2)}$, the reduced expression of $\tomega_n$ can be chosen (cf. \cite[Corollary 4.2.4]{Dami98}) as:
		\[\tomega_n = (s_0s_1\cdots s_n)^n. \]
		
\noindent	$(\rm iii)$		In the type $C_{n}^{(1)}$ (resp.  $D_{n+1}^{(2)}$), the reduced expressions of $\tomega_{n-1}$ and $\tomega_n$ can be chosen as:
		\[\tomega_{n-1} = (\bfs_{(0,n)}s_{n-1})^{n-1}  \quad \text{and} \quad 
		\tomega_{n}=	\tau s_n\bfs_{(n-1,n)}\bfs_{(n-2,n)}\cdots \bfs_{(1,n)}, 
		\]
		respectively, where $\tau$ is the diagram automorphism of $C_{n}^{(1)}$ (resp. $D_{n+1}^{(2)}$) sending $i$ to $n-i$ for $i \in I$. 
\end{remark}

Now, let us define the root vectors in $\Uqg$. We refer the reader to \cite{BN04} for the construction of root vectors $\Xzh{\beta}, \beta\in \triangle$ (i.e., $E_\beta$'s defined therein). In particular, the real root vectors $\Xzh{k\td_i\delta \pm \alpha_i}$ are described explicitly by 
\[\Xzh{k\td_i\delta + \alpha_i} = T_{\tomega_i}^{-k}X_i^+ \;\; (k\geq 0), \quad  \Xzh{k\td_i\delta - \alpha_i} = T_{\tomega_i}^{k}T_i\inv X_i^+  \;\; (k> 0). \]
Then $\Xzh{k\td_i\delta \pm \alpha_i}\in U^+$. The imaginary root vectors are defined by 
\begin{equation}\label{Equ:psi}
	\tpsi_{i,k\td_i}=\Xzh{k\td_i\delta-\alpha_i}\Xzh{i}- q_i^{-2}\Xzh{i}\Xzh{k\td_i\delta-\alpha_i} \quad (k>0)
\end{equation}
and define the elements $\Xzh{i, k\td_i\delta}$ by the following formal series in $z$:
\begin{equation}\label{formalseries}
	\mathrm{exp}\left((q_i-q_i\inv)\sum_{k\geq 1}\Xzh{i, k\td_i\delta}z^k\right) = 1+ \sum_{k\geq 1}(q_i-q_i\inv)\tpsi_{i,k\td_i}z^k.
\end{equation}

Under the isomorphism of two presentations of $U_q(\frakg)$, the generators $\psi_{i, k\td_i}^+$ and the imaginary root vectors $\tpsi_{i,k\td_i}$ are related (cf. \cite{Beck94,Dami12}), more precisely, 
	for $k>0$ and $i\in I_0$, we have 
	\begin{equation}
			\psi_{i, k\td_i}^+ = o(i)^k(q_i-q_i\inv)C^{-k\td_i/2}k_i\tpsi_{i,k\td_i}
	\end{equation}
	where $o:I_0 \rightarrow \{\pm1\}$ is a map such that $o(i)=-o(j)$ whenever 
	\begin{enumerate}[\rm i)]
		\item $a_{ij} \leq 0$ implies that $ o(i)o(j)=-1 $,
		\item in the twisted cases different from $A_{2n}^{(2)}$, if $a_{ij}=-2$ then $o(i)=1$. 
	\end{enumerate}

Note that $o(n)=1$ in the type $D_{n+1}^{(2)}$ as $a_{n,n-1}=-2$. Thus, we can deduce that the map $o: I_0 \rightarrow \{\pm 1 \}$ is uniquely determined in the type $D_{n+1}^{(2)}$.

\vspace{.3cm}
In Lemma \ref{Lemma:l-weight}, the scalars $a$ and $b$ can be described by the root vectors according to the above relations, which will become more computable in our case. Let $v\in L(\bsf)$ be a nonzero $\ell$-weight vector of $\bsf$. Since $C^{1/2}$ acts trivially on $L(\bsf)$ and $k_i$ commutes with $\tpsi_{i,k\td_i}$, it implies that 
\begin{align}\label{Equ:XX+}
	\Xzh{2\td_i\delta-\alpha_i}.v = o(i)a\Xzh{\td_i\delta-\alpha_i}.v \quad \text{and}\quad 
	\tpsi_{i,\td_i}.v = o(i)\frac{b}{q_i-q_i\inv}v. 
\end{align}

\begin{lemma}
 For any $i \in I_0$, $k\in \bbZ_{>0}$, we have the root vectors $\Xzh{k\td_i\delta-\alpha_i}$ in $U_q(\AffX)$ have the following relations:
	\[\Xzh{(k+1)\td_i\delta-\alpha_i}= \begin{dcases}
		\frac{1}{[3]_n^!} [\Xzh{k\delta-\alpha_n}, [\Xzh{\delta-\alpha_n}, \Xzh{n}]_q] & \text{if}\;  (X_N^{(r)}, i) = (A_{2n}^{(2)}, n), \\
		\frac{1}{[2]_i}[\Xzh{k\td_i\delta-\alpha_i}, [\Xzh{\td_i\delta-\alpha_i}, \Xzh{i}]_{q_i^2}] & \text{otherwise}.
	\end{dcases}
	\]
\end{lemma}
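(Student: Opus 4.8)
The plan is to first recognize the inner bracket as an imaginary root vector, and then to reduce the whole identity to a single index-shift relation in the Drinfeld presentation. By the definition (\ref{Equ:psi}) of the imaginary root vectors at $k=1$, the inner bracket in the generic case is $[\Xzh{\td_i\delta-\alpha_i},\Xzh{i}]_{q_i^2}=\tpsi_{i,\td_i}$; and in the special case $(\AffX,i)=(\AffAt,n)$, where $q_n^2=q$, the bracket $[\Xzh{\delta-\alpha_n},\Xzh{n}]_q$ is likewise equal to $\tpsi_{n,1}$. Hence both cases are subsumed in the single claim
\[
\Xzh{(k+1)\td_i\delta-\alpha_i}=\frac{1}{c_i}\,[\Xzh{k\td_i\delta-\alpha_i},\tpsi_{i,\td_i}],
\qquad
c_i=\begin{cases}[2]_i, & \text{if }(\AffX,i)\neq(\AffAt,n),\\ {[3]_n^!}, & \text{if }(\AffX,i)=(\AffAt,n),\end{cases}
\]
the outer bracket being the ordinary commutator. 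As $\tpsi_{i,\td_i}$ has weight $\td_i\delta$, the commutator automatically lands in the weight space of $(k+1)\td_i\delta-\alpha_i$, so only its proportionality to the root vector and the scalar $c_i$ need to be pinned down.

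To do so I would pass to the Drinfeld presentation through the isomorphism recalled in Section \ref{Sec:Pre} (cf. \cite{Beck94,Dami12}). Since $C$ acts trivially throughout, comparing the defining series (\ref{HHseries}) with the stated relation between $\psi^+_{i,k\td_i}$ and $\tpsi_{i,k\td_i}$ gives $\tpsi_{i,\td_i}=o(i)\,h_{i,\td_i}$, while the real root vectors $\Xzh{k\td_i\delta-\alpha_i}$ are identified, up to the signs $o(i)^k$ and a normalization, with the Drinfeld generators $x^-_{i,k\td_i}$. The recursion then turns into the commutation relation (\ref{DrinfeldRelations}): with $C$ trivial,
\[
[h_{i,\td_i},x^-_{i,k\td_i}]=-\frac{1}{\td_i}\Big(\sum_{s=1}^r\big[\tfrac{\td_i(\baralpha_i,\baralpha_{\sigma^s(i)})}{d_\bari}\big]_\bari\,\omega^{\td_i s}\Big)\,x^-_{i,(k+1)\td_i},
\]
which shifts the loop index by $\td_i$ and reassembles into $\Xzh{(k+1)\td_i\delta-\alpha_i}$. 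In the untwisted cases ($r=1$, $\td_i=1$) the orbit sum collapses to the single term $\big[\tfrac{(\alpha_i,\alpha_i)}{d_i}\big]_i=[2]_i$, and the claim follows at once.

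The delicate part, which I expect to be the main obstacle, is matching the constant in the twisted types, where the bare Drinfeld coefficient is \emph{not} already $[2]_i$. For a node with $\td_i=2$ in $\AffDt$ the orbit sum above evaluates to $[4]_q=[2]_i[2]_q$ (the long middle nodes have $q_i=q^2$), i.e. $[2]_i$ up to the extra factor $[2]_q$; this discrepancy is reconciled only after inserting the precise $q$-dependent normalization relating $\Xzh{k\td_i\delta-\alpha_i}$ to $x^-_{i,k\td_i}$ in the twisted setting. The hardest case is the special node $i=n$ of $\AffAt$: there the $\sigma$-orbit of $n$ has size two with $(\baralpha_n,\baralpha_{\sigma(n)})=-1$, the roots of unity $\omega^{ks}$ prevent the sum from reducing to a single $q$-integer, and the underlying Serre relation at this node is cubic, which is exactly what forces the nested triple bracket $[\,\cdot\,,[\,\cdot\,,\cdot]_q]$ and the constant $[3]_n^!=[3]_n[2]_n$. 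I would settle this case by the same Drinfeld computation, now using the twisted relations for $\AffAt$ at node $n$ (cf. \cite{Dami12,Dami15}) and a careful bookkeeping of the normalizations. As an alternative that sidesteps the constant chasing, one may argue intrinsically in the braid-group picture: establish the base case $k=1$ by a direct computation with the Lusztig automorphisms $T_i,T_{\tomega_i}$ and the quantum Serre relations (\ref{Relations:XX+}), and then induct on $k$; the obstacle there is that $T_{\tomega_i}$ does not fix $\tpsi_{i,\td_i}$, so one must first control its action on the imaginary root vector along the induction.
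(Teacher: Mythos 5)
Your reduction is exactly the paper's: identify the inner bracket with the imaginary root vector $\tpsi_{i,\td_i}$ via (\ref{Equ:psi}) (noting $q_n^2=q$ at the special node of $\AffAt$), so that everything hinges on the single commutator $[\Xzh{k\td_i\delta-\alpha_i},\tpsi_{i,\td_i}]=c_i\,\Xzh{(k+1)\td_i\delta-\alpha_i}$. Where you diverge is in how that commutator is obtained. The paper does not compute it: it quotes Damiani's relations $[\Xzh{k\td_i\delta-\alpha_i},\Xzh{i,\td_i\delta}]=[2]_i\Xzh{(k+1)\td_i\delta-\alpha_i}$ (resp.\ $[3]_n^!$ at $(\AffAt,n)$) from \cite[Prop.~2.2.4, Cor.~3.2.4]{Dami98}, and then only needs the one-line observation that comparing the coefficient of $z$ in (\ref{formalseries}) gives $\Xzh{i,\td_i\delta}=\tpsi_{i,\td_i}$. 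You propose instead to re-derive the commutator from the Drinfeld relation for $[h_{i,\td_i},x^-_{i,k\td_i}]$ — which is viable, since $\Xzh{k\td_i\delta-\alpha_i}$ differs from $x^-_{i,k\td_i}$ only by central powers of $C$, a factor $k_i^{-1}$ (which commutes with $h_{i,\td_i}$), and the sign $o(i)^k$, while $\tpsi_{i,\td_i}=o(i)h_{i,\td_i}$ modulo $C$; the stray $o(i)$'s cancel.

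The weakness is that your proposal stops exactly at the point that constitutes the content of the lemma: pinning down $c_i$. You defer the twisted constants to ``careful bookkeeping,'' and your one explicit check is off. For a middle node of $\AffDt$ ($\td_i=2$, $\sigma(i)=i$, $\omega=-1$) the orbit sum is $\sum_{s=1}^{2}[\,2\cdot 2/2\,]_{q^2}\,\omega^{2s}=2[2]_{q^2}$, not $[4]_q$; after the prefactor $1/\td_i=1/2$ the coefficient is exactly $[2]_i$, so there is no extra $[2]_q$ to reconcile and no hidden renormalization of the root vectors is needed. Likewise at $(\AffAt,n)$ the two orbit terms give $-[-2]_{q^{1/2}}+[4]_{q^{1/2}}=[2]_{q^{1/2}}+[4]_{q^{1/2}}=[3]_n^![$ $=[3]_n[2]_n]$ on the nose. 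So the obstacles you flag are more tractable than you suggest, but as written the decisive verifications are only promised, not performed; either carry them out or, as the paper does, cite Damiani's relations directly and finish with $\Xzh{i,\td_i\delta}=\tpsi_{i,\td_i}$.
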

\begin{proof}
	We may use the following relations  \cite[Proposition 2.2.4, Corollary 3.2.4]{Dami98} (cf. \cite{Beck94}): for $k\in \bbZ_{>0}$, 
	 \begin{equation}\label{Equ1}
	 	\begin{dcases}
	 	[\Xzh{k\delta-\alpha_n},\Xzh{n, \delta}] = [3]_n^! \Xzh{(k+1)\delta-\alpha_n} & \text{if}\;  (X_N^{(r)}, i) = (A_{2n}^{(2)}, n), \\
	 		[\Xzh{k\td_i\delta-\alpha_i},\Xzh{i, \td_i\delta}] = [2]_i \Xzh{(k+1)\td_i\delta-\alpha_i}& \text{otherwise}.
	 	\end{dcases}
	 \end{equation}
	 Note that $\Xzh{i, \td_i\delta}\in \Uqg$ is defined by the formal series (\ref{formalseries}). Since $\tpsi_{i,k\td_i}= [\Xzh{k\td_i\delta-\alpha_i}, \Xzh{i}]_{q_i^2}$ in $(\ref{Equ:psi})$, by comparing the coefficients of $z$ in (\ref{formalseries}), we can get 
	 \begin{equation}\label{Equ2}
	 	\Xzh{i, \td_i\delta} = \tpsi_{i,\td_i} = [\Xzh{\td_i\delta-\alpha_i}, \Xzh{i}]_{q_i^2},
	 \end{equation}
which implies the lemma by (\ref{Equ1}) and (\ref{Equ2}).
\end{proof}

Let $\alpha \in Q_+$. We introduce the \emph{height} $\mathrm{ht}\alpha$ of $\alpha$ as $\mathrm{ht}\alpha = \sum_{i\in I} m_i$ if $\alpha = \sum_{i\in I}m_i\alpha_i$. Define a subset $Q_+(\alpha)$ of $Q_+$ as follows:
\[Q_+(\alpha) = \{\beta \in Q_+\; |\; \mathrm{ht}\alpha - \mathrm{ht}\beta = 1, \alpha-\beta \neq \alpha_0 \}. \]
Let $U^+(\alpha)$ be the subspace of $U^+_\alpha$ defined as $U^+(\alpha) = \sum_{\beta\in Q_+(\alpha)}U^+_\beta \Xzh{\alpha-\beta}$. 

\begin{lemma}\label{Lemma:expressions}
	\noindent	$(\rm 1)$ For $i\in I_0$, the root vector $\Xzh{\delta-\alpha_i}$ in $U_q(A_{n-1}^{(1)})$ has the following form: 
	\begin{flalign*}
	\hspace{1cm}&	\Xzh{\delta-\alpha_i} \equiv (-q^{-1})^{n-2}(\Xzh{i+1}\cdots\Xzh{n-1})(\Xzh{i-1}\cdots\Xzh{2} \Xzh{1})\Xzh{0}\quad  (\mathrm{mod}\; U^+(\delta-\alpha_i)). &
	\end{flalign*}
	
	\noindent	$(\rm 2)$ In $U_q(C_n^{(1)})$,
\begin{alignat*}{3}
	&\Xzh{\delta-\alpha_{n-1}} &&\equiv q^{-n}(\Xzh{n}\Xzh{n-2}\cdots\Xzh{1})(\Xzh{n-1}\Xzh{n-2}\cdots \Xzh{1})\Xzh{0}\quad  (\mathrm{mod} \; U^+(\delta-\alpha_{n-1})), \\ 
	&\Xzh{\delta-\alpha_n} &&\equiv \Big(\frac{q\inv}{[2]_{1}}\Big)^{n-1}(\Xzh{n-1})^2(\Xzh{n-2})^2\cdots (\Xzh{1})^2\Xzh{0}\quad  (\mathrm{mod} \; U^+(\delta-\alpha_n)). 
\end{alignat*}

	\noindent	$(\rm 3)$ In $U_q(\AffAt)$, 
	\begin{flalign*}
	\hspace{1cm}&	\Xzh{\delta-\alpha_n} \equiv q^{-2n}(\Xzh{n-1}\Xzh{n-2}\cdots \Xzh{1})(\Xzh{n}\Xzh{n-1}\cdots \Xzh{1})\Xzh{0} \quad  (\mathrm{mod} \; U^+(\delta-\alpha_n)). &
	\end{flalign*}

	\noindent	$(\rm 4)$ In $U_q(\AffDt)$, 
\begin{flalign*}
\hspace{1cm}&	\Xzh{\delta-\alpha_n} \equiv q^{-2n+2}\Xzh{n-1}\Xzh{n-2}\cdots \Xzh{1}\Xzh{0} \quad  (\mathrm{mod} \; U^+(\delta-\alpha_n)). &
\end{flalign*}
\end{lemma}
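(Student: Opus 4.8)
The plan is to compute each root vector $\Xzh{\delta-\alpha_i}$ straight from its definition $\Xzh{\delta-\alpha_i}=T_{\tomega_i}T_i\inv\Xzh{i}$, retaining only the information that survives modulo $U^+(\delta-\alpha_i)$, namely the unique monomial whose rightmost factor is $\Xzh{0}$. First I would record that $\td_i=1$ for every root appearing in the statement: this is automatic for the untwisted types $A_{n-1}^{(1)}$ and $C_n^{(1)}$, holds for $A_{2n}^{(2)}$ by the displayed value of $\td_i$, and holds for $D_{n+1}^{(2)}$ because the diagram marks give $d_n=1$. Hence $\td_i\delta=\delta$ throughout and the defining formula applies verbatim, so the task reduces to identifying the class of $T_{\tomega_i}T_i\inv\Xzh{i}$ in $U^+_{\delta-\alpha_i}/U^+(\delta-\alpha_i)$.

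A useful preliminary reduction is to observe that $\tomega_i(\alpha_i)=\alpha_i-\td_i\delta=-(\delta-\alpha_i)$ is a negative real root, so $s_i$ is a right descent of $\tomega_i$; therefore $T_{\tomega_i}T_i\inv=T_{\tomega_is_i}$ with $\tomega_is_i$ reduced, and since $(\tomega_is_i)(\alpha_i)=\delta-\alpha_i$ is positive, $\Xzh{\delta-\alpha_i}=T_{\tomega_is_i}\Xzh{i}$ genuinely lies in $U^+$. In each of the four types a reduced word for $\tomega_is_i$ is obtained from the one in Remark \ref{Remark:Reducedexpression} by deleting the final letter $s_i$. I would then apply the braid generators along this word to $\Xzh{i}$ successively, using $T_j\Xzh{k}=\Xzh{k}$ when $a_{jk}=0$, the $q$-commutator $T_j\Xzh{k}=-(\Xzh{j}\Xzh{k}-q_j\inv\Xzh{k}\Xzh{j})$ when $a_{jk}a_{kj}=1$, and the divided-power version of the braid formula when a double bond $a_{jk}=-2$ is crossed; Lemma \ref{Lemma:Weylgroupelements} identifies the simple root each intermediate vector represents. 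The decisive simplification is that, after discarding every monomial whose rightmost factor (once the final $\scrT$-relabelling $T_\tau^i$, resp. $T_\tau$, is applied) is some $\Xzh{j}$ with $j\neq0$, what remains collapses to the single monomial terminating in $\Xzh{0}$ recorded in the statement, whose coefficient is a product of $q$-commutator scalars.

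For the type $A_{n-1}^{(1)}$ the surviving chain of $q$-commutators has length $n-2$, so that the coefficient is the product of $n-2$ bracket scalars, giving $(-q^{-1})^{n-2}$, and the relabelling by $T_\tau^i$ arranges the generators precisely in the order $(\Xzh{i+1}\cdots\Xzh{n-1})(\Xzh{i-1}\cdots\Xzh{1})\Xzh{0}$. The remaining cases run along the same lines with the reduced words of Remark \ref{Remark:Reducedexpression}(ii)--(iii); the only genuinely new feature is that crossing a double bond produces a squared generator $(\Xzh{j})^2$ together with a factor $1/[2]_j$ from the divided power, which is the source of the doubled exponents and of the $[2]_1$ denominators in part (2) and of the powers $q^{-2n}$, $q^{-2n+2}$ in parts (3)--(4). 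In type $C_n^{(1)}$ I would treat $\Xzh{\delta-\alpha_{n-1}}$ and $\Xzh{\delta-\alpha_n}$ separately, since they sit at opposite ends of the diagram and require the two different reduced words listed for $\tomega_{n-1}$ and $\tomega_n$.

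The main obstacle is the bookkeeping in the surviving-term argument: one must verify rigorously that after each braid application every monomial other than the intended one acquires a nonzero rightmost index and hence lies in $U^+(\delta-\alpha_i)$. I expect this is cleanest organized as an induction along the reduced word, carrying as hypothesis a normal form ``(partial chain)$\,\cdot\,$(rightmost generator destined to become $\Xzh{0}$)'' together with the running scalar, and using the commutation relations to push each freshly created generator past the accumulated chain. Matching all signs and $q$-powers to the normalization of the root vectors of \cite{BN04} is the delicate point; the twisted and non-simply-laced cases add only the extra bookkeeping of divided powers and of the $[2]_j$ factors, with no further conceptual difficulty.
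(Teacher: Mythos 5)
Your overall strategy coincides with the paper's: both start from $\Xzh{\delta-\alpha_i}=T_{\tomega_i}T_i\inv\Xzh{i}$, use the reduced expressions of Remark \ref{Remark:Reducedexpression}, and extract the unique monomial with rightmost factor $\Xzh{0}$, which is the only one visible modulo $U^+(\delta-\alpha_i)$; your preliminary observations (that $\td_i=1$ for every root occurring in the statement, and that $T_{\tomega_i}T_i\inv=T_{\tomega_i s_i}$ with $\tomega_i s_i$ reduced) are correct, and part (1) is anyway quoted by the paper from \cite{JKP21}. The difference lies in how the braid-word computation is organized, and this is where your plan has a genuine gap. You propose to apply the operators $T_{i_k}$ letter by letter and to discard, at each intermediate stage, every monomial whose rightmost factor is some $\Xzh{j}$ with $j\neq 0$. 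But the subspaces $U^+(\gamma)$ attached to the intermediate roots $\gamma$ are not stable under the remaining braid operators: $T_j$ applied to a monomial $Y\Xzh{k}$ gives $T_j(Y)\,T_j(\Xzh{k})$, where $T_j(\Xzh{k})$ is in general a $q$-commutator (and for $j=k$ equals $-X_j^-K_j$, which is not even in $U^+$), so a term discarded early can in principle re-enter the class of the surviving monomial after further conjugation. The ``normal form plus running scalar'' induction you sketch is precisely the point that requires a real argument, and the proposal does not supply one.

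The paper closes this gap with a device your plan is missing: the bracket operators $\calD^{(1)}_i(X)=[X,\Xzh{i}]_{q_i}$ and $\calD^{(2)}_i(X)=[[X,\Xzh{i}],\Xzh{i}]_q$ commute with $T_\tau$ (up to relabelling of the index) and with any $T_w$ for which $w\alpha_i=\alpha_i$ --- for instance $T_{(0,n)}T_{n-1}\calD_n^{(s)}=\calD_n^{(s)}T_{(0,n)}T_{n-1}$ in type $C_n^{(1)}$, justified by Lemma \ref{Lemma:Weylgroupelements}. This allows one to peel the reduced word from the outside, converting each block into a single bracket operator and shortening the remaining word, until one arrives at an \emph{exact} closed formula such as $\Xzh{\delta-\alpha_n}=([2]_1)^{-(n-1)}\calD^{(2)}_{n-1}\cdots\calD^{(2)}_1\Xzh{0}$. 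Only then is the reduction modulo $U^+(\delta-\alpha_i)$ performed, and at that stage it is trivial: expanding $[X,\Xzh{i}]_{q_i}=X\Xzh{i}-q_i\inv\Xzh{i}X$, the first summand manifestly lies in $U^+(\delta-\alpha_i)$, so only the fully left-nested monomial survives and the scalars $(-q_i\inv)$ and the divided-power factors $[2]_j\inv$ multiply out to the constants in the statement. If you replace your discard-as-you-go induction by this commutation argument (or prove an equally precise statement controlling how discarded monomials propagate under the remaining $T_{i_k}$), your proof goes through; as written, the surviving-term claim is asserted rather than established.
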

\begin{proof}
	Thanks to the reduced expressions of $\tomega_i$ in Remark \ref{Remark:Reducedexpression}, the lemma can be deduced directly by the definition. One can refer to \cite[Lemma 4.7]{JKP21} for the assertion (1). To see the remaining assertions we define the operators $\calD^{(1)}_i$ and $\calD^{(2)}_i$ of $\Uqg$ for $i \in I_0$ by $\calD^{(1)}_{i}(X) = [X, \Xzh{i}]_{q_i}$ and  $\calD^{(2)}_i(X) = [[X, \Xzh{i}], \Xzh{i}]_q$ for any $X\in \Uqg$, respectively. In the type $C_n^{(1)}$, we note that $T_\tau \calD^{(s)}_i = \calD^{(s)}_{n-i}T_\tau$ and $T_j\calD^{(s)}_i = \calD^{(s)}_iT_j$ for $|i-j|>1$ and $s = 1, 2$. Denote $T_{\bfs_{(i,j)}}=T_{(i,j)}$ for simplicity. Then we have 
	\[T_{(0,n)}T_{n-1}\calD_n^{(s)} = \calD_n^{(s)} T_{(0,n)}T_{n-1}, \]
	due to $\bfs_{(0,n)}s_{n-1}\alpha_n = \alpha_n$. Moreover, for any $0 < i < n-1$, we have $(\bfs_{(0,n)}s_{n-1})^{i-1}\alpha_1 = \alpha_i$ and $\bfs_{(i+1, n-1)}\bfs_{(i, n-2)}\alpha_{n-2} = \alpha_i$ by using Lemma  \ref{Lemma:Weylgroupelements}, thus we get  
	\begin{align*}
		&(T_{(0,n)}T_{n-1})^iT_{(0,n-2)}X_{n-1}^+ \\
		&\hspace{2cm} = (T_{(0,n)}T_{n-1})^{i-1}T_{(0,n)}T_{(0,n-3)}X_{n-2}^+ \\
		&\hspace{2cm} =(T_{(0,n)}T_{n-1})^{i-1}T_{(0,n-2)}T_{(0,n-3)}T_{n-1}X_{n-2}^+ \\
		&\hspace{2cm} =-(T_{(0,n)}T_{n-1})^{i-1}[T_{(0,n-2)}\Xzh{n-1}, \Xzh{1}]_{q_{n-1}} \\
		&\hspace{2cm} =-\calD_i^{(1)}(T_{(0,n)}T_{n-1})^{i-1}T_{(0,n-2)}\Xzh{n-1},
	\end{align*}
	and 
	\begin{align*}
		T_{(i+1, n)}T_{(i, n-1)}\Xzh{n} &= T_{(i+1, n-1)}T_{(i,n-2)}T_nT_{n-1}\Xzh{n} \\
		& = T_{(i+1, n-1)}T_{(i,n-2)}T_{n-1}\inv\Xzh{n}\\
		&=\frac{1}{[2]_{n-1}}[[T_{(i+1,n-1)}\Xzh{n}, \Xzh{i}], \Xzh{i}]_q \\
		&= \frac{1}{[2]_1}\calD^{(2)}_iT_{(i+1,n-1)}\Xzh{n} . 
	\end{align*}
Finally, the definition of the root vectors and Remark \ref{Remark:Reducedexpression} imply that 
\begin{align*}
	\Xzh{\delta-\alpha_{n-1}} &= (T_{(0,n)}T_{n-1})^{n-2}T_{(0,n)}\Xzh{n-1} \\
	& = - (T_{(0,n)}T_{n-1})^{n-2}T_{(0,n-2)}\calD_n^{(1)}\Xzh{n-1} \\
	& = - \calD_n^{(1)}(T_{(0,n)}T_{n-1})^{n-2}T_{(0,n-2)}\Xzh{n-1} \\
	& =  \calD_n^{(1)}\calD_{n-2}^{(1)}(T_{(0,n)}T_{n-1})^{n-3}T_{(0,n-2)}\Xzh{n-1} \\
	& \hspace{1cm}\cdots \cdots \\
	& = (-1)^{n-1}\calD_n^{(1)}\calD_{n-2}^{(1)}\cdots \calD_{1}^{(1)}T_{(0,n-2)}\Xzh{n-1} \\
	&= \calD_n^{(1)}\calD_{n-2}^{(1)}\cdots \calD_{1}^{(1)} \calD_{n-1}^{(1)}\cdots \calD_{1, q}^{(1)}\Xzh{0}, 
\end{align*}
where $\calD_{i, q}^{(1)}(X) = [X, \Xzh{i}]_{q}$ for $X \in U$, and 
 \begin{align*}
 	\Xzh{\delta-\alpha_n} 
 	&= T_\tau T_n T_{(n-1,n)}\cdots T_{(2,n)}T_{(1,n-1)}\Xzh{n} \\
 	&= \frac{1}{[2]_1}T_\tau T_n T_{(n-1,n)}\cdots T_{(3,n)}\calD^{(2)}_{1}T_{(2,n-1)}\Xzh{n} \\
 	&=\frac{1}{[2]_1}\calD^{(2)}_{n-1}T_\tau T_n T_{(n-1,n)}\cdots T_{(3,n)}T_{(2,n-1)}\Xzh{n}\\
 	& \hspace{1cm}\cdots \cdots \\
 	&=\Big(\frac{1}{[2]_1}\Big)^{n-1}\calD^{(2)}_{n-1}\calD^{(2)}_{n-2}\cdots\calD^{(2)}_1\Xzh{0},
 \end{align*}
which deduce the assertion (2). Similarly, we can prove that 
\begin{align}
\label{AtXn}	&\Xzh{\delta-\alpha_n}  = -\calD^{(1)}_{n-1}\cdots\calD^{(1)}_{1}\calD_{n, q_1}^{(1)}\calD_{n-1}^{(1)}\cdots\calD_2^{(1)}\calD_{1, q_0}^{(1)}(\Xzh{0}) \quad \text{in}\quad U_q(\AffAt), \\
	&\Xzh{\delta-\alpha_n}  = (-1)^{n-1}\calD^{(1)}_{n-1}\cdots\calD^{(1)}_{1}(\Xzh{0}) \quad \text{in}\quad U_q(\AffDt).
\end{align}
which imply (3) and (4). 
\end{proof}

\begin{remark}
In order to simplify computations in the following theorem for the type $\AffAt (n \geq 2)$, we actually only need the two terms of  $\Xzh{\delta-\alpha_n}$: 
	\begin{align}\label{Aexpression}
		q^{-2n}(\Xzh{n-1}\Xzh{n-2}\cdots \Xzh{1})(\Xzh{n}\Xzh{n-1}\cdots \Xzh{1})\Xzh{0} - q^{-2n+1}(\Xzh{n-1}\cdots\Xzh{1})^2\Xzh{0}\Xzh{n},
	\end{align}
	which can be deduced directly by the formula (\ref{AtXn}).
\end{remark}

Now we compute the highest $\ell$-weights of the $q$-oscillator representations defined in Subsection \ref{Subsection:q-oscillator representations}.

\begin{theorem}
$(\rm 1)$ 
	Fix $0 < s < n$ and $l \in \bbZ$. The $U_q(A_{n-1}^{(1)})$-module $\calW_{s}^{(l)}$ has the highest $\ell$-weight $\bsf = (f_i(z))_{i \in I_0}$ given as follows:
			\[
	f_i(z)=
		\frac{c_{i,l}+u}{1+ c_{i,l}u} \quad \text{with}\quad c_{i,l}=
		\begin{dcases}
			q^{\delta_{i, s-1}l - \delta_{i, s}(l+1)} & \text{if}\quad l\geq 0  \\
			q^{\delta_{i, s}(l-1) - \delta_{i, s+1}l} & \text{if}\quad l< 0
		\end{dcases}
	\]
 for $1 \leq i \leq n-1$, where $u = o(s)(-q\inv)^nz$, 
 
\noindent $(\rm 2)$ The highest $\ell$-weight of the $U_q(C_{n}^{(1)})$-module $\calW^+$  (resp. $\calW^-$) is given as follows: 
\[(1, \cdots, 1,  \frac{q^{-1/2}+u}{1+ q^{-1/2}u }) \quad (\text{resp.} \quad  (1, \cdots, 1, \frac{q^{1/2}+u}{1+q^{1/2}u}, \frac{q^{-3/2}+u}{1+q^{-3/2}u}))\]
where $u = o(n)q^{-n-1}z$. 

\noindent $(\rm 3)$ The highest $\ell$-weight of the $U_q(\AffAt)$ (resp.  $U_q(D_{n+1}^{(2)})$)-module $\calW$  is given by 
\[(1, \cdots, 1, \frac{\imath q_n\inv +u}{1 + \imath q_n\inv u}) \]
where $u = o(n)\imath \tau_q q^{-2n-1}z$ (resp. $ u = q^{-2n}z $). 

\end{theorem}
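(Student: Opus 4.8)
The plan is to handle all four types by a single strategy: apply Lemma~\ref{Lemma:l-weight} one component at a time to the highest $\ell$-weight vector $v$ supplied by Proposition~\ref{Prop:Highestweights} (so $v=v_{l,s}$, $v^{\pm}$, or $\cket{\bf 0}$ according to the module, with the two sign ranges of $l$ in type $A_{n-1}^{(1)}$ handled by the two choices of $v_{l,s}$ in $(\ref{Equ:v_{ls}})$). Since each module is irreducible and multiplicity-free, every weight space $L(\bsf)_{\wt(\bsf)-\alpha_i}$ is at most one-dimensional, so Lemma~\ref{Lemma:l-weight} applies to each $i\in I_0$ and reduces the computation of $f_i(z)$ to three scalars: the constant $f_{i,0}^+=\wt(\bsf)(K_i)$, together with $a_i$ and $b_i$ characterized through $(\ref{Equ:XX+})$. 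I note that at every node $i$ where the answer is nonconstant one has $\td_i=1$, so throughout I may work with $\Xzh{\delta-\alpha_i}$ and $\Xzh{2\delta-\alpha_i}$ rather than their $\td_i$-rescalings.

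First I would read off $f_{i,0}^+=\wt(\bsf)(K_i)$ directly from the diagonal action of $K_i$ on $v$ listed in Subsection~\ref{Subsection:q-oscillator representations}. This yields the constant terms at once, e.g.\ $f_{i,0}^+=c_{i,l}$ for type $A_{n-1}^{(1)}$ and the values $q^{\pm1/2},q^{-3/2},\imath q_n\inv$ in the remaining types, and in particular forces $f_i(z)\equiv 1$ at every index away from $\{s-1,s,s+1\}$ (resp.\ away from $n$, or $\{n-1,n\}$), where one checks that $b_i=0$. Next, to extract $b_i$ I would use $\Xzh{i}.v=0$ together with $(\ref{Equ:psi})$ to rewrite $\tpsi_{i,\td_i}.v=-q_i^{-2}\Xzh{i}\,\Xzh{\delta-\alpha_i}.v$, substitute the explicit monomial representative of $\Xzh{\delta-\alpha_i}$ from Lemma~\ref{Lemma:expressions}, and evaluate the resulting string of raising operators on the tensor Fock vector using $(\ref{Equation:X^+})$–$(\ref{Equation:weightaction})$ and the boundary actions of $X_0^{\pm},X_n^{\pm}$; comparison with $\tpsi_{i,\td_i}.v=o(i)\frac{b_i}{q_i-q_i\inv}v$ then isolates $b_i$.

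To extract $a_i$ I would apply the recursion expressing $\Xzh{(k+1)\td_i\delta-\alpha_i}$ through $\Xzh{k\td_i\delta-\alpha_i}$, $\Xzh{\delta-\alpha_i}$ and nested commutators (the $[3]_n^!$-variant for $\AffAt$ at $i=n$, the $[2]_i$-variant otherwise) to $v$, and read $a_i$ off from $\Xzh{2\delta-\alpha_i}.v=o(i)a_i\,\Xzh{\delta-\alpha_i}.v$. Finally I would substitute $f_{i,0}^+$, $a_i$, $b_i$ into $f_i(z)=f_{i,0}^+\frac{1-(a_i-b_i)z}{1-a_iz}$ and simplify, verifying that the spectral substitution $u=o(s)(-q\inv)^{n}z$ (resp.\ $o(n)q^{-n-1}z$, $o(n)\imath\tau_q q^{-2n-1}z$, $q^{-2n}z$) turns this rational function into the claimed form $\frac{c_{i,l}+u}{1+c_{i,l}u}$ and its analogues. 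Because the constant-$1$ components require only $b_i=0$, it suffices to carry out the full computation at the one or two nonconstant nodes of each type.

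The main obstacle will be the honest evaluation of these long monomial root vectors on the Fock vectors in computing $b_i$ and $a_i$, keeping exact track of the accumulated powers of $q$ coming from the prefactors $(-q^{-1})^{n-2}$, $q^{-n}$, $q^{-2n}$ and $([2]_1)^{-1}$ of Lemma~\ref{Lemma:expressions}, of the signs $o(i)$ and factors of $\imath$, and of the $\nu$-integers $[m]_\nu$ generated at each node. The type $\AffAt$ at $i=n$ is the genuinely delicate case: here $\Xzh{\delta-\alpha_n}$ is only determined modulo $U^+(\delta-\alpha_n)$, and I would use the two-term representative $(\ref{Aexpression})$, separately checking that the remaining terms of $U^+(\delta-\alpha_n)$ either annihilate $v$ or drop out of the ratio defining $a_n$; the $[3]_n^!$-normalized recursion demands the same care. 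Once the bookkeeping is settled at a representative node of each type, the other nodes follow by the identical pattern.
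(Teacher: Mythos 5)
Your proposal is correct and follows essentially the same route as the paper: take the highest $\ell$-weight vector from Proposition \ref{Prop:Highestweights}, reduce via Lemma \ref{Lemma:l-weight} and $(\ref{Equ:XX+})$ to computing $\tpsi_{i,\td_i}.v$ and $\Xzh{2\td_i\delta-\alpha_i}.v$, evaluate these using the explicit root vectors of Lemma \ref{Lemma:expressions} and the commutator recursion, and handle $\AffAt$ with the two-term representative $(\ref{Aexpression})$. The only cosmetic difference is that the paper disposes of part (1) by citing \cite[Theorem 4.10]{KL2022} rather than redoing the computation.
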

\begin{proof}
The proof of the first assertion can be found in \cite[Theorem 4.10]{KL2022}. For (2), we have verified in Proposition \ref{Prop:Highestweights} that $v^+=\cket{{\bf 0}}$ and $v^-=\cket{\bfe_n}$ are  highest $\ell$-weight vectors of $U_q(C_n^{(1)})$-modules $\calW^+$ and $\calW^-$ respectively. Therefore, it follows from Lemma \ref{Lemma:l-weight} and the formulae (\ref{Equ:XX+}) that we only need to compute the actions of $\Xzh{2\td_i\delta-\alpha_i}$ and $\tpsi_{i,\td_i}$ on $v^{\pm}$. 

Note that $\Xzh{j}.v^\pm = 0$ for all $j \in I_0$.  By using Lemma \ref{Lemma:expressions} we have 
\[\Xzh{\delta-\alpha_n}.v^+ = \frac{q^{-n+1}}{[2]_1}\cket{2\bfe_n}, \quad  \Xzh{\delta-\alpha_n}.v^- = \frac{q^{-n+1}}{[2]_1}\cket{3\bfe_n},\]
and then 
\[\tpsi_{n,1}.v^+ = \frac{q^{-n-1}}{[2]_1}v^+, \quad \tpsi_{n,1}.v^- = \frac{q^{-n-1}}{[2]_1}[3]_1v^-. \]
Therefore, we have 
\begin{align*}
	\Xzh{2\delta-\alpha_n}.v^+ & =\frac{1}{[2]}[\Xzh{\delta-\alpha_n}, \tpsi_{n,1}].v^+=\frac{1}{[2]}(\frac{q^{-n-1}}{[2]_{1}}\Xzh{\delta-\alpha_n}.v^+- \frac{q^{-n+1}}{[2]_{1}}\tpsi_{n,1}.\cket{2\bfe_n}), \\
	&= \frac{q^{-n+1}}{[2][2]_{1}}(q^{-2}+1-q^{-2}\frac{[4]_1[3]_1}{[2]_1})\Xzh{\delta-\alpha_n}.v^+\\
	&=-q^{-n-3/2}\Xzh{\delta-\alpha_n}.v^+
\end{align*}
and 
\begin{align*}
		\Xzh{2\delta-\alpha_n}.v^- & = \frac{1}{[2]}[\Xzh{\delta-\alpha_n}, \tpsi_{n,1}].v^- = \frac{q^{-n+1}}{[2][2]_{1}}(q^{-2}[3]_1\Xzh{\delta-\alpha_n}.v^-- \tpsi_{n,1}.\cket{3\bfe_n})\\
		& = \frac{q^{-n+1}}{[2][2]_{1}}(q^{-2}[3]_1+[3]_1 - q^{-2}\frac{[5]_1[4]_1}{[2]_1})\Xzh{\delta-\alpha_n}.v^- \\
		&= - q^{-n-5/2}\Xzh{\delta-\alpha_n}.v^-. 
\end{align*}
On the other hand, 
\[\Xzh{\delta-\alpha_{n-1}}.v^- = -q^{-n}\cket{\bfe_{n-1}}, \quad  \tpsi_{n-1,1}.v^- = q^{-n-1}v^-,\]
and then 
\begin{align*}
	\Xzh{2\delta-\alpha_{n-1}}.v^- & = \frac{1}{[2]_{n-1}}[\Xzh{\delta-\alpha_{n-1}}, \tpsi_{n-1,1}].v^- =
 \frac{q^{-n}}{[2]_{n-1}}(q\inv\Xzh{\delta-\alpha_{n-1}}.v^-+ \tpsi_{n-1,1}.\cket{\bfe_{n-1}})\\
	& = \frac{q^{-n}}{[2]_{n-1}}(q\inv+1)\Xzh{\delta-\alpha_{n-1}}.v^- \\
	&=  q^{-n-1/2}\Xzh{\delta-\alpha_{n-1}}.v^-. 
\end{align*}
In other cases, we can check that $\Xzh{\delta-\alpha_i}.v = 0$, then $\tpsi_{i,1}.v = 0 $ and $\Xzh{2\delta-\alpha_i}.v = 0$. Thus, we get (2) as desired.

To get (3), let $v:= \cket{0}$. We first focus on the type $D_{n+1}^{(2)}$.  By Lemma \ref{Lemma:expressions}(4) we have 
\[\Xzh{\delta-\alpha_n}.v =q^{-2n+2}\cket{2\bfe_n}, \quad \tpsi_{n,1}.v = -q^{-2}\Xzh{n}\Xzh{\delta-\alpha_n}.v = -\imath \tau_\nu q^{-2n}v,  \]
and then 
\begin{align*}
	\Xzh{2\delta-\alpha_n}.v & =\frac{1}{[2]}[\Xzh{\delta-\alpha_n}, \tpsi_{n,1}].v=\frac{1}{[2]}(-\imath\tau_\nu q^{-2n}\Xzh{\delta-\alpha_n}.v- q^{-2n+2}\tpsi_{n,1}.\cket{\bfe_n}), \\
	&= \frac{q^{-2n+2}}{[2]}(-\imath\tau_\nu q^{-2}-\imath\tau_\nu+\imath\tau_\nu [2]_\nu q^{-2})\Xzh{\delta-\alpha_n}.v\\
	&=-\imath q^{-2n-1}\Xzh{\delta-\alpha_n}.v.
\end{align*}
For $i \neq n$,  we have $\Xzh{\td_i\delta-\alpha_i}.v = 0$, then $\tpsi_{i,\td_i}.v = 0 $ and $\Xzh{2\td_i\delta-\alpha_i}.v = 0$. Thus, the assertion (3) for  the type $D_{n+1}^{(2)}$ is proved. In the type $A_{2n}^{(2)}$, Lemma \ref{Lemma:expressions}(3) yields 
\[\Xzh{\delta-\alpha_n}.v =q^{-2n}\imath \tau_q\cket{\bfe_n}, \quad \tpsi_{n,1}.v = -q^{-1}\Xzh{n}\Xzh{\delta-\alpha_n}.v =  \tau_q^2 q^{-2n-1}v.  \]
Note that all terms in the expression of $\Xzh{\delta-\alpha_n}$ vanish on the vector $\cket{\bfe_n}$ except for the two terms in (\ref{Aexpression}). We can compute the following action by using  (\ref{Aexpression}): 
\[\Xzh{\delta-\alpha_n}.\cket{\bfe_n} = \imath \tau_q q^{-2n-1}\cket{2\bfe_n}. \]
Therefore, 
\begin{align*}
	\Xzh{2\delta-\alpha_n}.v & =\frac{1}{[3]_n^!}[\Xzh{\delta-\alpha_n}, \tpsi_{n,1}].v=\frac{1}{[3]_n^!}(\tau_q^2 q^{-2n-1}\Xzh{\delta-\alpha_n}.v- \imath \tau_q q^{-2n}\tpsi_{n,1}.\cket{\bfe_n}), \\
	&= \frac{q^{-2n}\tau_q}{[3]_n^!}(q\inv +1 - q^{-2}[2])\Xzh{\delta-\alpha_n}.v\\
	&=q^{-2n-3/2}\tau_q\Xzh{\delta-\alpha_n}.v.
\end{align*}
Then $a=o(n)\tau_q q^{-2n-3/2}$ and $b=o(n)\tau_q q^{-2n}(q^{-1/2}+q^{-3/2})$. The assertion (3) for the type $A_{2n}^{(2)}$ follows from Lemma \ref{Lemma:l-weight} and Corollary \ref{Highestweight}(3). 
\end{proof}

\section*{Acknowledgements}
This paper was partially supported by the NSF of China (11931009, 12161141001, 12171132 and 11771410) and Innovation Program for Quantum Science and Technology (2021ZD0302902). The author would like to thank Prof. Yun Gao and Prof. Hongjia Chen for discussions and their encouragements. The author would also like to thank the anonymous referee for helpful suggestions which improved the exposition of this paper.

     \appendix
     \section{Proof of Theorem \ref{mainthm}}\label{Appendix}
     \subsection{Proof of Theorem \ref{mainthm}}
     
   	For generalized Cartan matrices of finite types, the corresponding system of equations $ (\ref{mainequ}) $ has been solved in \cite{CGLW}, the prescription used in this appendix is parallel with the one there.

     Given an affine Cartan matrix $A$. Fix one $i \in I$,  denote $\calA_{x_i}$ the subalgebra of $\calA$ generated by all $x_j^{\pm 1}$ for $j \neq i$. Then there is a natural isomorphism $ \calA \cong \calA_{x_i}[x_i^{\pm 1}].$  Suppose that $(\phi_i)_{i \in I}$ is any solution to the system of equations (\ref{mainequ}).  
     
     We have the following two crucial lemmas. 
     
     \begin{lemma} \label{lemformu}
     	Any $\phi \in \calA$ satisfying  $\zeta_i(\phi)-\phi = \{y_i\}_i $ has the form $ \phi = \betazh_iy_i  + \phi_{0} + \betafu_iy_i\inv$, where $\phi_{0} \in \calA_{x_i}$, $\betazh_i = -q_i(q_i-q_i\inv)^{-2}$ and $ \betafu_i= -q_i\inv(q_i-q_i\inv)^{-2} $. 
     \end{lemma}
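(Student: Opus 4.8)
The plan is to exploit the fact that $\zeta_i$ is diagonalized by the natural $\bbZ$-grading of $\calA$ coming from the exponent of $x_i$. First I would invoke the isomorphism $\calA \cong \calA_{x_i}[x_i^{\pm 1}]$ to write $\phi = \sum_{m \in \bbZ} c_m x_i^m$ uniquely, with coefficients $c_m \in \calA_{x_i}$, almost all zero. Since $\zeta_i$ fixes $\calA_{x_i}$ pointwise and scales $x_i^m$ by $q_i^{-m}$, we obtain $\zeta_i(\phi) - \phi = \sum_{m} (q_i^{-m} - 1) c_m x_i^m$; thus the left-hand side is again $x_i$-homogeneously decomposed, with the operator acting as multiplication by $q_i^{-m}-1$ in degree $m$.

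Next I would identify the $x_i$-degrees appearing on the right-hand side. Because $a_{ii} = 2$ for every generalized Cartan matrix, the monomial $y_i = \prod_{j\in I} x_j^{a_{ji}}$ factors as $y_i = x_i^2 w$ with $w = \prod_{j \neq i} x_j^{a_{ji}} \in \calA_{x_i}$ a unit. Hence $\{y_i\}_i = (x_i^2 w - x_i^{-2} w^{-1})/(q_i - q_i^{-1})$ is supported purely in $x_i$-degrees $+2$ and $-2$. Matching the coefficient of $x_i^m$ on both sides of $\zeta_i(\phi) - \phi = \{y_i\}_i$ then splits into three regimes: for $m \notin \{0, \pm 2\}$ the right-hand coefficient is zero while $q_i^{-m} - 1 \neq 0$ (as $q_i = q^{d_i}$ is not a root of unity), forcing $c_m = 0$; for $m = 0$ the factor $q_i^{0}-1$ vanishes and the right side has no degree-$0$ term, so $c_0 =: \phi_0$ is an unconstrained element of $\calA_{x_i}$; and for $m = \pm 2$ the coefficient $c_{\pm 2}$ is uniquely determined.

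The only genuine computation left is the scalar bookkeeping in degrees $\pm 2$. Solving $(q_i^{-2}-1)c_2 = w/(q_i - q_i^{-1})$ and $(q_i^2 - 1)c_{-2} = -w^{-1}/(q_i - q_i^{-1})$ and simplifying via $q_i - q_i^{-1} = (q_i^2 - 1)/q_i$ gives $c_2 = -q_i^3 w/(q_i^2-1)^2$ and $c_{-2} = -q_i w^{-1}/(q_i^2-1)^2$, so that $c_2 x_i^2 = \betazh_i y_i$ and $c_{-2} x_i^{-2} = \betafu_i y_i^{-1}$ with $\betazh_i = -q_i(q_i-q_i^{-1})^{-2}$ and $\betafu_i = -q_i^{-1}(q_i - q_i^{-1})^{-2}$, as claimed. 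I do not anticipate a real obstacle here: the entire content is the observation that $\zeta_i$ acts as a diagonalizable operator whose only eigenvalue-$1$ subspace is the $x_i$-degree-$0$ part $\calA_{x_i}$, together with the elementary fact that $\{y_i\}_i$ lives in degrees $\pm 2$.
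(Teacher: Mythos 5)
Your proposal is correct and follows essentially the same route as the paper: expand $\phi$ in powers of $x_i$ with coefficients in $\calA_{x_i}$, observe that $\zeta_i - \mathrm{id}$ acts as multiplication by $q_i^{-m}-1$ in $x_i$-degree $m$ while $\{y_i\}_i$ is supported in degrees $\pm 2$, and match coefficients. The scalar computation for $c_{\pm 2}$ checks out and agrees with $\betazh_i$ and $\betafu_i$ as stated.
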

     \begin{proof}
     	Let $\phi = \sum_k \phi_k x_i^k$ with $\phi_k \in \calA_{x_i}$. Then $\zeta_i(\phi)-\phi = \{y_i\}_i $ implies that 
     	\[\sum_{k} (q_i^{-k}-1)\phi_k x_i^k = \frac{1}{q_i- q_i\inv} x_i^2 \prod_{j\neq i}x_j^{a_{ji}} - \frac{1}{q_i- q_i\inv} x_i^{-2} \prod_{j\neq i}x_j^{-a_{ji}}. \]
     	Hence $\phi_k$ is zero unless $k = 0, \pm 2$, and 
     	\[\phi_2 = \betazh_i \prod_{j\neq i}x_j^{a_{ji}}, \quad \phi_{-2} = \betafu_i \prod_{j\neq i}x_j^{-a_{ji}}.   \]
     	So the lemma is proved.
     \end{proof}
     Therefore,  we may always assume that $\phi_i$ in the system of equations (\ref{mainequ}) satisfies  $\phi_i = \betazh_iy_i  + \phi_{i, 0} + \betafu_iy_i\inv$ where  $\phi_{i, 0} \in \calA_{x_i}$. 
     
     Note that any pair $(\phi_i, \phi_j)$ is $(i, j)$-shiftable. This condition can further restrict the choices of $\phi_{i,0}$ and $\phi_{j,0}$ when the nodes $i$ and $j$ are not connected in the Dynkin diagram of $A$, namely, $a_{ij} = 0$. More precisely, we have 
     \begin{lemma}\label{disconnected}
     	If $a_{ij} = 0 $, then both $\phi_{i, 0}$ and $ \phi_{j, 0} $ lie in $\calA_{x_i} \cap \calA_{x_j}$. 
     \end{lemma}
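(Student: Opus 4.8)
The plan is to substitute the normal form of $\phi_i$ and $\phi_j$ supplied by Lemma~\ref{lemformu} into the $(i,j)$-shiftability equation and track degrees in the two distinguished variables $x_i,x_j$. First I record what $a_{ij}=0$ buys us. Since $A$ is a generalized Cartan matrix, $a_{ij}=0$ forces $a_{ji}=0$, so $x_j$ appears in neither $y_i$ nor $y_i\inv$ and $x_i$ appears in neither $y_j$ nor $y_j\inv$; concretely $y_i=x_i^2u_i$ and $y_j=x_j^2u_j$ with $u_i,u_j$ monomials in $\calA_{x_i}\cap\calA_{x_j}$. I then expand $\phi_{i,0}=\sum_m c_mx_j^m$ and $\phi_{j,0}=\sum_l d_lx_i^l$ with coefficients $c_m,d_l\in\calA_{x_i}\cap\calA_{x_j}$, and observe that $\zeta_i\inv$ (resp.\ $\zeta_j\inv$) multiplies a monomial of $x_i$-degree $d$ (resp.\ $x_j$-degree $e$) by $q_i^{d}$ (resp.\ $q_j^{e}$); hence both act diagonally on the $(x_i,x_j)$-bigrading, which is the structure I will exploit.

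Next I would expand $\phi_i\phi_j=\zeta_j\inv(\phi_i)\zeta_i\inv(\phi_j)$. Writing $\phi_i=\psi_i+\phi_{i,0}$ with $\psi_i=\betazh_iy_i+\betafu_iy_i\inv$ (and symmetrically for $\phi_j$), and using that $\psi_i$ is fixed by $\zeta_j\inv$ while $\psi_j$ is fixed by $\zeta_i\inv$, subtracting the two sides gives
\[ \psi_i\,P + Q\,\psi_j + R = 0, \]
where $P=\phi_{j,0}-\zeta_i\inv(\phi_{j,0})=\sum_l(1-q_i^l)d_lx_i^l$, $Q=\phi_{i,0}-\zeta_j\inv(\phi_{i,0})=\sum_m(1-q_j^m)c_mx_j^m$, and $R=\sum_{l,m}(1-q_j^mq_i^l)c_md_l\,x_i^lx_j^m$. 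The point is that $Q\psi_j$ is supported entirely in $x_i$-degree $0$, and in $x_i$-degree $l\neq0$ the term $R$ contributes only its $x_j$-degree-$0$ part; so extracting the coefficient of $x_i^l$ at $x_j$-degree $0$ for each $l\neq0$ produces the clean three-term recurrence
\[ \betazh_i u_i(1-q_i^{l-2})d_{l-2} + \betafu_i u_i\inv(1-q_i^{l+2})d_{l+2} + c_0(1-q_i^l)d_l = 0. \]

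Finally I run an extremal argument on the finitely supported family $(d_l)$. If some $d_l$ with $l\neq0$ were nonzero, let $l^*$ be the largest index with $d_{l^*}\neq0$; evaluating the recurrence at $l^*+2$ annihilates the $d_{l^*+2},d_{l^*+4}$ terms and leaves $\betazh_iu_i(1-q_i^{l^*})d_{l^*}=0$. Since $\betazh_i=-q_i(q_i-q_i\inv)^{-2}\neq0$, $u_i$ is a unit, $\calA_{x_i}\cap\calA_{x_j}$ is an integral domain, and $q$ is not a root of unity (so $1-q_i^{l^*}\neq0$ for $l^*>0$), this forces $d_{l^*}=0$, a contradiction; the smallest index is handled identically via the $\betafu_i$ term, ruling out a minimal index below $0$. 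Hence $d_l=0$ for all $l\neq0$, that is $\phi_{j,0}=d_0\in\calA_{x_i}\cap\calA_{x_j}$, and interchanging the roles of $i$ and $j$ gives $\phi_{i,0}\in\calA_{x_i}\cap\calA_{x_j}$ as well. I expect the only real obstacle to be the bidegree bookkeeping in the previous paragraph: one must see that the cross-terms collected in $R$ do not interfere with the pure $x_i$-axis recurrence, after which the extremal step is immediate and, notably, does not even require the off-axis relations $(1-q_j^mq_i^l)c_md_l=0$.
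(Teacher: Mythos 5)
Your proof is correct and follows essentially the same route as the paper's: both compare extremal $(x_i,x_j)$-bidegree components of the two sides of the shiftability equation and use that $q_i^{l}\neq 1$ for $l\neq 0$, the paper simply reading off the top and bottom $x_j$-degree terms of $\phi_{i,0}$ against $\betazh_j y_j$ and $\betafu_j y_j^{-1}$ instead of writing out the full three-term recurrence. The only point to tighten is that $l^*$ should be taken as the largest \emph{positive} (resp.\ smallest negative) index with $d_{l^*}\neq 0$, so that the recurrence is applied at the nonzero index $l^*+2$ (resp.\ $l^*-2$) and the surviving factor $1-q_i^{l^*}$ is genuinely nonzero.
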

     \begin{proof}
     	Since $a_{ij}= 0$, then $a_{ji} = 0$, we have $y_i \in \calA_{x_j}$ and $y_j \in \calA_{x_i}$. If $\phi_{i, 0} = 0$, there is nothing to do. Assume that $\phi_{i,0}$ is not zero. We rewrite $\phi_{i, 0}$ uniquely as the Laurent polynomial in $x_j$, i.e., a unique form in $\calA_{x_j}[x_j^{\pm 1}]$. Take the nonzero term in this form of $\phi_{i,0}$ such that $x_j$ has the highest (resp. lowest) power, denoted by $\phi_{i, max}$ (resp. $\phi_{i, min}$), then the shiftability of $(\phi_i, \phi_j)$ implies that 
     	\[q_j^m\betazh_j\phi_{i,max}y_j = \betazh_j \phi_{i,max}y_j, \quad q_j^l \betafu_j \phi_{i, min}y_j\inv =\betafu_j \phi_{i, min} y_j\inv,  \]
     	where $m = \Deg_{x_j}\phi_{i,max}$ and $l = \Deg_{x_j}\phi_{i,max}$. Hence $m = 0=l$. Then we can conclude that $\phi_{i,0} \in \calA_{x_i}\cap \calA_{x_j}$ as desired. Similarly, we have $\phi_{j,0} \in \calA_{x_i}\cap \calA_{x_j}$.
     \end{proof}
     
     Let us first focus on the rank-two cases. Fix $i \neq j$ in $I$ and $J = \{i, j \}$. Due to Lemma \ref{disconnected} we may assume that the nodes $i$ and $j$ are connected. 
     Without loss of generality, we set $\lambda = a_{ij}, \mu= a_{ji}$ and $ |\lambda| \geq |\mu| $, then 
     \[ A_J = \left(\begin{array}{@{}cc@{}}
     	2 & \lambda\\
     	\mu & 2
     \end{array}\right)  \text{\; where\ } 1 \leq \lambda \mu \leq 4. \] 
     Then $y_i = x_i^2x_j^\mu$ and $y_j = x_i^\lambda x_j^2$ in this case. Assume that $\varphi_i$ and $\varphi_j$ have the forms as in Lemma \ref{lemformu}, i.e., 
     \[\varphi_l = \betazh_ly_l  + \phi_{l, 0} + \betafu_ly_l\inv, \]
     where $\varphi_{l,0}\in \calA_{x_l},  l \in J$, and let  $\varphi_i$ and $\varphi_j$ satisfy the equality 
     \[	\varphi_i\varphi_j = \zeta_j\inv (\varphi_i) \zeta_i\inv (\varphi_j)  \eqno(\ast) \]
     
     Record the $(x_i, x_j)$-degrees of a monomial $u$ in $\calA$ by a \emph{degree vector}
     \[ \left(\begin{array}{@{}c@{}}
     	\Deg_{x_i} u \\
     	\Deg_{x_j} u
     \end{array}\right) \]
     Then a Laurent polynomial $f$ corresponds to a matrix with each column vector representing for the $(x_i, x_j)$-degrees of certain term of $f$. Moreover, if $\varphi_{i, 0}$ is not zero (resp. $\varphi_{j, 0}$ is not zero), then we use one vector with a parameter $s$ (resp. $t$)
     \[ \left(\begin{array}{@{}c@{}}
     	0 \\
     	s
     \end{array}\right)
     \quad (\text{resp. }
     \left(\begin{array}{@{}c@{}}
     	t \\
     	0
     \end{array}\right)
     )
     \]
     to stand for the possible $(x_i, x_j)$-degrees of $\varphi_{i,0}$ (resp.  $\varphi_{j,0}$). For example, by Lemma \ref{disconnected}, if $a_{ij}=0$, then $s$ and $t$ always equal $0$.
     Therefore, we obtain the following matrix with possible $(x_i, x_j)$-degrees of terms of $\varphi_i\varphi_j$:
     \begin{align*}
     	\left(\begin{array}{@{}ccccccccc@{}}
     		q_i^{\lambda}q_j^\mu & q_i^{t}q_j^\mu & 1 &q_i^{\lambda}q_j^s&q_i^{t}q_j^s&q_i^{-\lambda}q_j^s&1 & q_i^{t}q_j^{-\mu} & q_i^{-\lambda}q_j^{-\mu} \\
     		2+\lambda          &2+t &2-\lambda      & \lambda       & t            & -\lambda& \lambda-2             & t-2            &-\lambda-2 \\
     		\mu+2        & \mu          &  \mu-2      & s+2           & s       &  s-2              &  2-\mu& -\mu  &-2-\mu
     	\end{array}\right)
     \end{align*}
     where the first row of the above matrix is the corresponding \emph{shifted coefficients} in $\zeta_i\inv(\varphi_j)\zeta_j\inv(\varphi_i)$. The terms with {shifted coefficient} $1$ can be cancelled on the left and right hand sides of the equality $ (\ast) $, then we may omit such terms. Therefore, we have the following matrix
     \[ 
     \left(\begin{array}{@{}ccccccc@{}}
     	q_j^{2\mu}& q_i^{t}q_j^\mu  &q_i^{\lambda}q_j^s&q_i^{t}q_j^s&q_i^{-\lambda}q_j^s& q_i^{t}q_j^{-\mu} & q_j^{-2\mu} \\
     	2+\lambda          &2+t      & \lambda       & t            & -\lambda            & t-2            &-\lambda-2 \\
     	\mu+2        & \mu               & s+2           & s       &  s-2              & -\mu  &-2-\mu
     \end{array}\right) \eqno(\rm{M1})
     \]
     One useful statement is that if a shifted coefficient is not $1$, then the corresponding degree vector has to be equal to another one in  the matrix $ (\rm{M1}) $ by the equality $(\ast)$. Therefore we can determine all possible $(x_i, x_j)$-degrees of $\varphi_{i,0}$ and $\varphi_{j,0}$ as follows: 
     
     \vspace{.3cm}
     \begin{center}
     	\begin{tabular}{c|c|c}
     		\hline 
     		Types	& $(\lambda, \mu)$ & Possible values of $s, t$  \\ 
     		\hline
     		$A_2$	& $(-1, -1)$  & $s, t \in \{1, -1 \}$  \\
     		\hline
     		$B_2 (=C_2)$	& $(-2, -1)$ & $\varphi_{i,0}=0, t = \pm 2$ or $s=\pm1, t = 0$  \\
     		\hline
     		$G_2$ & $ (-3, -1) $ & None \\
     		\hline
     		$A_1^{(1)}$	& $(-2, -2)$  & $s= 0 = t$  \\
     		\hline
     		$A_2^{(2)}$	& $(-4, -1)$ & $\varphi_{i, 0} = 0, t = 0$ \\
     		\hline
     	\end{tabular}
     \end{center}
     
     \vspace{.3cm}
     Note that there is no $(x_i, x_j)$-degree vector for the type $G_2$ satisfying the above statement, so does for types $G_2^{(1)}$ and $D_4^{(3)}$. We have obtained all solutions for the type $A_1^{(1)}$ in Example \ref{ExampleA}. In the type $A_2^{(2)}$, we substitute the reduced forms of $\varphi_{i,0}$ and $\varphi_{j,0}$, i.e., $\varphi_{i,0}=0, \varphi_{j,0}\in \bbC^\times$, into  (\ref{mainequ}), and then get 
     \[  \varphi_i = \frac{\imath}{q_i - q_i\inv}\{\imath q^{\frac{1}{2}}x_i^2x_j\inv \}_i, \quad \varphi_j = \{\imath q^{-\frac{1}{2}}x_i^{2}x_j\inv \}_j\{\imath q^{\frac{3}{2}}x_i^{-2}x_j \}_j.  \]
     By our assumption in Section \ref{Sec:Pre}, we have $i=1, j=0$ and $\phi_0 = \varphi_j, \phi_1 = \varphi_i$ for the type $A_2^{(2)}$. 
     
     Let us turn to the higher rank cases. The next result tells us how to ``glue'' the rank-two cases together.
     
     \begin{lemma}\label{Lemma: degree}
     	Let $j \in I$ be a node which connects to the other two distinct nodes $i$ and $l$ in the  Dynkin diagram. Assume that $\phi_{j, 0}\neq 0$ and the pair of integers $(m, t)$ is the $(x_i, x_l)$-degree of any nonzero (monomial) term of $\phi_{j, 0}$. Then we have $mt \leq 0$. 
     \end{lemma}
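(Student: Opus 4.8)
The plan is to run, in parallel, the extremal–degree argument of Lemma~\ref{disconnected} in the two variables $x_i$ and $x_l$ and then to couple the two pieces of information. First I would put $\phi_i,\phi_j,\phi_l$ in the normal form of Lemma~\ref{lemformu}, so that $\phi_j=\betazh_j y_j+\phi_{j,0}+\betafu_j y_j\inv$ with $\phi_{j,0}\in\calA_{x_j}$. The hypothesis that $j$ is joined to $i$ and to $l$ means $a_{ij},a_{ji},a_{lj},a_{jl}$ are all negative, so $y_j$ has $(x_i,x_l)$-bidegree $(a_{ij},a_{lj})$ (both negative) and $y_j\inv$ has bidegree $(-a_{ij},-a_{lj})$ (both positive). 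The two relevant instances of the shiftability equation are the $(i,j)$-equation $\phi_i\phi_j=\zeta_j\inv(\phi_i)\zeta_i\inv(\phi_j)$ and the $(l,j)$-equation $\phi_l\phi_j=\zeta_j\inv(\phi_l)\zeta_l\inv(\phi_j)$. I write a term of $\phi_{j,0}$ as a monomial of bidegree $(m,t)$ and aim to exclude the two open quadrants $m,t>0$ and $m,t<0$.

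The key structural simplification is that, outside the single triangular diagram $A_2^{(1)}$, the two neighbours $i$ and $l$ of $j$ are never adjacent, so $a_{il}=0$ and Lemma~\ref{disconnected} applies to the pair $(i,l)$, giving $\phi_{i,0},\phi_{l,0}\in\calA_{x_i}\cap\calA_{x_l}$ and hence $\phi_i\in\calA_{x_l}$, $\phi_l\in\calA_{x_i}$. Consequently the $(i,j)$-equation is homogeneous for the inert $x_l$-grading and splits, for each $x_l$-degree $t$, into a shiftability identity $\phi_i\,\Phi_t=\zeta_j\inv(\phi_i)\,\zeta_i\inv(\Phi_t)$ between $\phi_i$ and the $x_l$-homogeneous component $\Phi_t$ of $\phi_j$; symmetrically the $(l,j)$-equation splits along the $x_i$-grading into identities for the components $\Phi'_m$. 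Now I apply the highest/lowest $x_i$-power argument of Lemma~\ref{disconnected} to each identity: the unique highest-$x_i$ term of $\phi_i$ is $\betazh_i y_i$ (of $x_i$-degree $2$) and the unique lowest is $\betafu_i y_i\inv$; pairing these with the extreme $x_i$-terms of $\Phi_t$ produces the unique extreme monomial of the product, and equating coefficients gives $q_j{}^{a_{ji}}q_i{}^{m^+}=1$ for the top $x_i$-degree $m^+$ of any nonzero $\Phi_t$, whence $m^+=-a_{ij}$ by $d_ia_{ij}=d_ja_{ji}$. This yields $|m|\le -a_{ij}$ and, dually, $|t|\le -a_{lj}$.

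This two–sided bound alone is not enough, since it is consistent with a full grid of terms; the sign condition comes from a finer coefficient computation. Suppose, for contradiction, that $\phi_{j,0}$ has a nonzero term $P$ of bidegree $(m,t)$ with $m,t>0$ (the case $m,t<0$ is handled dually, exchanging highest with lowest powers and $y_i$ with $y_i\inv$). The naive probe $y_i\cdot P$ is useless, because its pairing carries the factor $1-q_j{}^{a_{ji}}q_i{}^{m}$, which degenerates exactly at the extreme value $m=-a_{ij}$. Instead I would test the $(i,j)$-equation on the monomial $y_i\inv P$: the diagonal pairing $(y_i\inv,P)$ has factor $1-q_j{}^{-a_{ji}}q_i{}^{m}=1-q^{d_i(-a_{ij}+m)}\neq0$, since $-a_{ij}+m>0$. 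The secondary $x_j$-grading is then essential: $\phi_{j,0}$ lives in $x_j$-degree $0$ while $y_j^{\pm1}$ live in $x_j$-degree $\mp2$, which separates the contribution of $P$ from that of $y_j\inv$, and in the simply–laced situation the only surviving competitor is a pairing of a term of $\phi_{i,0}$ against a quadrant-II term of $\phi_{j,0}$ at bidegree $(a_{ij},-a_{lj})$, whose factor is $1-q^{\,d_j-d_i}=0$. The coefficient identity therefore collapses to $b_P\,(1-q^{d_i(-a_{ij}+m)})=0$, forcing $b_P=0$, a contradiction.

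The hard part will be controlling these competing pairings in the multiply–laced cases. When $-a_{ij}\in\{2,3\}$ the monomial $y_i\inv P$ admits further decompositions (for instance a pairing with $\betazh_i y_i$), and $d_i\neq d_j$ destroys the automatic vanishing of the competitor factors that makes the simply–laced case immediate. I expect to treat these by placing the probe monomial at the extreme diagonal corner of the Newton polygon and refining simultaneously by the $x_j$- and $x_l$-gradings, thereby reducing once more to the rank–two degree–vector tables already established in the proof of Theorem~\ref{mainthm}. Finally, the triangular diagram $A_2^{(1)}$, where $i$ and $l$ are adjacent and the decoupling fails, I would verify directly from the explicit $A_n^{(1)}$ solutions listed after Theorem~\ref{mainthm}.
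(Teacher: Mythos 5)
Your core strategy coincides with the paper's: probe the identity $\phi_i\phi_j=\zeta_j\inv(\phi_i)\zeta_i\inv(\phi_j)$ at the monomial $\betafu_i y_i\inv P$, note that its shifted coefficient $q_j^{-a_{ji}}q_i^{m}=q_i^{m-a_{ij}}\neq 1$, and rule out interfering products. Your preliminary splitting of the $(i,j)$-equation along the inert $x_l$-grading (valid once Lemma \ref{disconnected} gives $\phi_i\in\calA_{x_l}$), yielding the bounds $|m|\le -a_{ij}$ and $|t|\le -a_{lj}$, is sound and is actually more explicit than anything in the paper; likewise your observation that a product of an $x_j$-degree-$(-a_{ji})$ term of $\phi_{i,0}$ with a bidegree-$(m-2,t)$ term of $\phi_{j,0}$ can share the degree vector $(m-2,-a_{ji},t-a_{li})$ but contributes with factor $1-q_j^{-a_{ji}}q_i^{a_{ij}}=1-q^{0}=0$ is correct and completes the simply-laced case.

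The gap is that you have not actually proved the multiply-laced case: the paragraph beginning ``The hard part will be\dots I expect to treat these by\dots'' is a plan, not an argument, and this is precisely the case the lemma is needed for (node $j$ of an $F_4$-type configuration, with one simple and one double edge, which is what excludes $F_4^{(1)}$, $E_6^{(2)}$, etc.; the $D_4$ exclusion alone does not suffice). You are right that for $-a_{ij}=2$ the competitor $(\phi_{i,0}[x_j^{-a_{ji}}],\,\phi_{j,0}[(m-2,t)])$ no longer has an automatically vanishing factor, but the resolution is not a Newton-polygon refinement: it is the rank-two $B_2$ table already recorded in the appendix, which forces either $\phi_{j,0}=0$ (nothing to prove) or $\deg_{x_j}\phi_{i,0}=0\neq -a_{ji}$, so the competitor simply does not occur; you should say this explicitly. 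Separately, your treatment of $A_2^{(1)}$ is circular: the list of solutions displayed after Theorem \ref{mainthm} is obtained in the appendix by substituting reduced forms of the $\phi_{i,0}$'s whose shape depends on this very lemma, so you cannot invoke that list to verify the lemma. In fact the decoupling does not fail as badly as you fear for $A_2^{(1)}$: with $a_{li}=-1$ the probe $y_i\inv P$ still works, the extra competitors being excluded by their $x_l$-degree or again carrying the factor $1-q_jq_i\inv=0$, so no separate verification is needed.
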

     \begin{proof}
     	Otherwise, assume that $mt > 0$ and the corresponding nonzero term of $\phi_{j,0}$  is $\phi_{j,0}^{(1)}$. Without loss of generality, we may let $m > 0$ and $t > 0$. Consider the term $\betafu_i y_i\inv \phi_{j,0}^{(1)}$ of $\phi_i \phi_j$ which has the factor $x_i^{m-2}x_j^{-a_{ji}}x_l^{t-a_{li}}$. So we have the shifted coefficient $q_i^mq_j^{-a_{ji}} = q_i^{m-a_{ij}}$ in $\zeta_j\inv\phi_i\zeta_i\inv \phi_j$ is not $1$. However, there is no other term in $\phi_i\phi_j$ whose $(x_i, x_j, x_l)$-degree vector equals $(m-2, -a_{ji}, t-a_{li})$. It is a contradiction. Hence $mt \leq 0$. 
     \end{proof}
     
     The Lemma \ref{Lemma: degree} implies that there is no solution to the system of equations (\ref{mainequ}) for $A$ whose Dynkin diagram contains $D_4$ or $F_4$ as a subdiagram. 
     
     So far, we have ruled out all affine Cartan matrices except that of types  $A_{n}^{(1)} (n \geq 1), C_{n}^{(1)} (n \geq 2), A_{2n}^{(2)} (n \geq 1)$ or $D_{n+1}^{(2)} (n \geq 2)$.  Now we can substitute the reduced forms of $\phi_{i,0}$'s into the system of equations (\ref{mainequ}) to determine the coefficients of the possible terms. Then we obtain all solutions as listed below Theorem \ref{mainthm}. Therefore, Theorem \ref{mainthm} is proved as desired.

  	\bibliographystyle{amsplain}

\end{document}